\documentclass[letterpaper,final,twoside,leqno]{amsart}

%\usepackage{tikz}
%\usepackage{tikz-cd}
%\tikzset{commutative diagrams/arrow style=math font}

\usepackage[normalem]{ulem}
\usepackage{mathtools}
\usepackage{mathrsfs}  
\usepackage[arrow,curve,matrix]{xy}

%to define square:
\usepackage{scalerel}
\usepackage{comment}
\usepackage{xspace}

% Macros
\newif\ifdraft
\draftfalse

\newcommand\reduced{strongly snc\xspace}

\newcommand\bdot{}
\newcommand\Lm{\sW_{\!m}}
\newcommand\smn{\ensuremath{\mcS\mcm_{n,\nu}}}
\newcommand\stn{\ensuremath{\mcS\mct_{n,\nu}}}

\setcounter{tocdepth}{1}
\numberwithin{figure}{section}

%%%%%%%%%%%%%%%%%%%%%%%%MyMacro

% Fonts
\usepackage{amsmath,amsfonts,amsthm,mathrsfs}
\usepackage{amssymb}
\usepackage{mathrsfs}

\DeclareFontFamily{OMS}{rsfs}{\skewchar\font'60}
\DeclareFontShape{OMS}{rsfs}{m}{n}{<-5>rsfs5 <5-7>rsfs7 <7->rsfs10 }{}
\DeclareSymbolFont{rsfs}{OMS}{rsfs}{m}{n}
\DeclareSymbolFontAlphabet{\scr}{rsfs}

% Hyperlinks
\usepackage[bookmarks]{hyperref}
\usepackage[usenames,dvipsnames]{xcolor}
\hypersetup{
    colorlinks=true,
    linkcolor={blue!50!black},
    citecolor={green!50!black},
    urlcolor={red!80!black}
}

% Bibliography
\usepackage[alphabetic,initials]{amsrefs}

%\usepackage{ifthen}

% Enumerate
\usepackage{enumitem}

% Counters etc.
\usepackage{chngcntr}

% Display keys (disable with option final)
\ifdraft
\usepackage[notcite,notref,color]{showkeys}

\definecolor{labelkey}{gray}{0.5}
\fi

% Improved commutative diagrams
\usepackage{tikz}

\usepackage{tikz-cd}
\tikzset{commutative diagrams/arrow style=math font}

\usetikzlibrary{matrix,arrows}
\newlength{\myarrowsize} 

% Version similar to Computer Modern
\pgfarrowsdeclare{cmto}{cmto}{
	\pgfsetdash{}{0pt} 
	\pgfsetbeveljoin 
	\pgfsetroundcap 
	\setlength{\myarrowsize}{0.6pt}
	\addtolength{\myarrowsize}{.5\pgflinewidth}
	\pgfarrowsleftextend{-4\myarrowsize-.5\pgflinewidth} 
	\pgfarrowsrightextend{.8\pgflinewidth}
}{
	\setlength{\myarrowsize}{0.6pt} 
  	\addtolength{\myarrowsize}{.5\pgflinewidth}  
	\pgfsetlinewidth{0.5\pgflinewidth}
	\pgfsetroundjoin
	% top half
	\pgfpathmoveto{\pgfpoint{1.5\pgflinewidth}{0}}
	\pgfpatharc{-109}{-170}{4\myarrowsize}
	\pgfpatharc{10}{189}{0.58\pgflinewidth and 0.2\pgflinewidth}
	\pgfpatharc{-170}{-115}{4\myarrowsize+\pgflinewidth}
	\pgfpathclose
	\pgfusepathqfillstroke
	% bottom half
	\pgfpathmoveto{\pgfpoint{1.5\pgflinewidth}{0}}
	\pgfpatharc{109}{170}{4\myarrowsize}
	\pgfpatharc{-10}{-189}{0.58\pgflinewidth and 0.2\pgflinewidth}
	\pgfpatharc{170}{115}{4\myarrowsize+\pgflinewidth}
	\pgfpathclose
	\pgfusepathqfillstroke
	% Change line width back
	\pgfsetlinewidth{2\pgflinewidth}
}

\pgfarrowsdeclare{cmonto}{cmonto}{
	\pgfsetdash{}{0pt} 
	\pgfsetbeveljoin 
	\pgfsetroundcap 
	\setlength{\myarrowsize}{0.6pt}
	\addtolength{\myarrowsize}{.5\pgflinewidth}
	\pgfarrowsleftextend{-4\myarrowsize-.5\pgflinewidth} 
	\pgfarrowsrightextend{.8\pgflinewidth}
}{
	\setlength{\myarrowsize}{0.6pt} 
  	\addtolength{\myarrowsize}{.5\pgflinewidth}  
	\pgfsetlinewidth{0.5\pgflinewidth}
	\pgfsetroundjoin
	% top half
	\pgfpathmoveto{\pgfpoint{1.5\pgflinewidth}{0}}
	\pgfpatharc{-109}{-170}{4\myarrowsize}
	\pgfpatharc{10}{189}{0.58\pgflinewidth and 0.2\pgflinewidth}
	\pgfpatharc{-170}{-115}{4\myarrowsize+\pgflinewidth}
	\pgfpathclose
	\pgfusepathqfillstroke
	% bottom half
	\pgfpathmoveto{\pgfpoint{1.5\pgflinewidth}{0}}
	\pgfpatharc{109}{170}{4\myarrowsize}
	\pgfpatharc{-10}{-189}{0.58\pgflinewidth and 0.2\pgflinewidth}
	\pgfpatharc{170}{115}{4\myarrowsize+\pgflinewidth}
	\pgfpathclose
	\pgfusepathqfillstroke
	% top half (2)
	\pgfpathmoveto{\pgfpoint{1.5\pgflinewidth-0.3em}{0}}
	\pgfpatharc{-109}{-170}{4\myarrowsize}
	\pgfpatharc{10}{189}{0.58\pgflinewidth and 0.2\pgflinewidth}
	\pgfpatharc{-170}{-115}{4\myarrowsize+\pgflinewidth}
	\pgfpathclose
	\pgfusepathqfillstroke
	% bottom half (2)
	\pgfpathmoveto{\pgfpoint{1.5\pgflinewidth-0.3em}{0}}
	\pgfpatharc{109}{170}{4\myarrowsize}
	\pgfpatharc{-10}{-189}{0.58\pgflinewidth and 0.2\pgflinewidth}
	\pgfpatharc{170}{115}{4\myarrowsize+\pgflinewidth}
	\pgfpathclose
	\pgfusepathqfillstroke
	% Change line width back
	\pgfsetlinewidth{2\pgflinewidth}
}

\pgfarrowsdeclare{cmhook}{cmhook}{
	\pgfsetdash{}{0pt} 
	\pgfsetbeveljoin 
	\pgfsetroundcap 
	\setlength{\myarrowsize}{0.6pt}
	\addtolength{\myarrowsize}{.5\pgflinewidth}
	\pgfarrowsleftextend{-4\myarrowsize-.5\pgflinewidth} 
	\pgfarrowsrightextend{.8\pgflinewidth}
}{
	\setlength{\myarrowsize}{0.6pt} 
  	\addtolength{\myarrowsize}{.5\pgflinewidth}  
 	\pgfsetdash{}{0pt}
	\pgfsetroundcap
	\pgfpathmoveto{\pgfqpoint{0pt}{-4.667\pgflinewidth}}
	\pgfpathcurveto
    {\pgfqpoint{4\pgflinewidth}{-4.667\pgflinewidth}}
    {\pgfqpoint{4\pgflinewidth}{0pt}}
    {\pgfpointorigin}
	\pgfusepathqstroke
}

% #1 = column sep = 2em
% #2 = row sep = 2.5em

\newenvironment{diagram*}[2]{%
\[%
\begin{tikzpicture}[>=cmto,baseline=(current bounding box.center),%
	to/.style={->,font=\scriptsize,cap=round},%
	into/.style={cmhook->,font=\scriptsize,cap=round},%
	onto/.style={-cmonto,font=\scriptsize,cap=round},%
	math/.style={matrix of math nodes, row sep=#2, column sep=#1,%
		text height=1.5ex, text depth=0.25ex}]%
}{%
\end{tikzpicture}%
\]% 
\ignorespacesafterend%
}

% Example of a diagram
% 
% \begin{diagram}{2.5em}{2em}
% \matrix[math] (m) { A & B & C & D \\ }; 
% \path[into] (m-1-1) edge (m-1-2);
% \path[to] (m-1-2) edge node[auto] {$f$} (m-1-3);
% \path[onto] (m-1-3) edge (m-1-4);
% \end{diagram}

% For testing
% \begin{diagram}{2.5em}{2em}
% \matrix[math] (m) { A & \hspace{-1.29em} \raisebox{0.19pt}{$\to$} B \\ };
% \path[to] (m-1-1) edge[red] (m-1-2);
% \end{diagram}

% Operators
% \DeclareMathOperator{\Exc}{Exc}
% \DeclareMathOperator{\im}{im}
% \DeclareMathOperator{\rk}{rk}
 \DeclareMathOperator{\coker}{coker}
 \DeclareMathOperator{\Sym}{Sym}
 
% \DeclareMathOperator{\DR}{DR}
% \DeclareMathOperator{\Spenc}{\widetilde{\mathrm{DR}}}
% \DeclareMathOperator{\End}{End}
 
% \DeclareMathOperator{\Ext}{Ext}
% \DeclareMathOperator{\GL}{GL}
% \DeclareMathOperator{\SL}{SL}
% \DeclareMathOperator{\can}{can}
% \DeclareMathOperator{\Var}{Var}
% \DeclareMathOperator{\Ch}{Ch}
% \DeclareMathOperator{\Alb}{Alb}
% \DeclareMathOperator{\Pic}{Pic}
% \DeclareMathOperator{\alb}{alb}
% \DeclareMathOperator{\Loc}{Loc}
% \DeclareMathOperator{\ident}{1}
% \newcommand{\red}{\mathrm{red}}
% \newcommand{\kdot}{{{\,\begin{picture}(1,1)(-1,-2)\circle*{2}\end{picture}\,}}}

% \newcommand{\cl}[1]{\overline{#1}}

% % Defining terms (in text)
% \newcommand{\define}[1]{\emph{#1}}

% \newcommand{\sA}{\scr{A}}
% \newcommand{\sB}{\scr{B}}
% \newcommand{\sC}{\scr{C}}
% \newcommand{\sD}{\scr{D}}
% \newcommand{\sE}{\scr{E}}
% \newcommand{\sL}{\scr{L}}
% \newcommand{\sF}{\scr{F}}
% \newcommand{\sG}{\scr{G}}
% \newcommand{\sH}{\scr{H}}
% \newcommand{\sK}{\scr{K}}
% \newcommand{\sM}{\scr{M}}
% \newcommand{\sN}{\scr{N}}
% \newcommand{\sO}{\scr{O}}
% \newcommand{\sP}{\scr{P}}
% \newcommand{\sQ}{\scr{Q}}
% \newcommand{\sT}{\scr{T}}
% \newcommand{\sV}{\scr{V}}
% \newcommand{\sW}{\scr{W}}

\DeclareMathOperator{\disc}{disc}
\DeclareMathOperator{\pr}{pr}
\DeclareMathOperator{\R}{\myR}

\newcommand{\wtilde}{\widetilde}

\newcommand{\hooklongrightarrow}{\lhook\joinrel\longrightarrow}

% Nicer hyperlinks
\newcommand{\theoremref}[1]{\hyperref[#1]{Theorem~\ref*{#1}}}
\newcommand{\lemmaref}[1]{\hyperref[#1]{Lemma~\ref*{#1}}}
\newcommand{\definitionref}[1]{\hyperref[#1]{Definition~\ref*{#1}}}
\newcommand{\propositionref}[1]{\hyperref[#1]{Proposition~\ref*{#1}}}
\newcommand{\conjectureref}[1]{\hyperref[#1]{Conjecture~\ref*{#1}}}
\newcommand{\corollaryref}[1]{\hyperref[#1]{Corollary~\ref*{#1}}}
\newcommand{\exampleref}[1]{\hyperref[#1]{Example~\ref*{#1}}}

% Number \figure using equation
\makeatletter
\let\old@caption\caption
\renewcommand*{\caption}[1]{%
  \setcounter{figure}{\value{equation}}%
  \stepcounter{equation}%
  \old@caption{#1}\relax%
}
\makeatother

% For the introduction
\newcounter{intro}

\newtheorem{intro-conjecture}[intro]{Conjecture}
\newtheorem{intro-corollary}[intro]{Corollary}
\newtheorem{intro-theorem}[intro]{Theorem}

% Other macros

% Sectioning
%\newcommand{\newpar}[1]{\subsection{\texorpdfstring{}{}}}

\newcommand{\parref}[1]{\hyperref[#1]{\S\ref*{#1}}}

% Nice command for \widebar
\makeatletter
\newcommand*\if@single[3]{%
  \setbox0\hbox{${\mathaccent"0362{#1}}^H$}%
  \setbox2\hbox{${\mathaccent"0362{\kern0pt#1}}^H$}%
  \ifdim\ht0=\ht2 #3\else #2\fi
  }
%The bar will be moved to the right by a half of \macc@kerna, which is computed by amsmath:
\newcommand*\rel@kern[1]{\kern#1\dimexpr\macc@kerna}
%If there's a superscript following the bar, then no negative kern may follow the bar;
%an additional {} makes sure that the superscript is high enough in this case:
\newcommand*\widebar[1]{\@ifnextchar^{{\wide@bar{#1}{0}}}{\wide@bar{#1}{1}}}
%Use a separate algorithm for single symbols:
\newcommand*\wide@bar[2]{\if@single{#1}{\wide@bar@{#1}{#2}{1}}{\wide@bar@{#1}{#2}{2}}}
\newcommand*\wide@bar@[3]{%
  \begingroup
  \def\mathaccent##1##2{%
%If there's more than a single symbol, use the first character instead (see below):
    \if#32 \let\macc@nucleus\first@char \fi
%Determine the italic correction:
    \setbox\z@\hbox{$\macc@style{\macc@nucleus}_{}$}%
    \setbox\tw@\hbox{$\macc@style{\macc@nucleus}{}_{}$}%
    \dimen@\wd\tw@
    \advance\dimen@-\wd\z@
%Now \dimen@ is the italic correction of the symbol.
    \divide\dimen@ 3
    \@tempdima\wd\tw@
    \advance\@tempdima-\scriptspace
%Now \@tempdima is the width of the symbol.
    \divide\@tempdima 10
    \advance\dimen@-\@tempdima
%Now \dimen@ = (italic correction / 3) - (Breite / 10)
    \ifdim\dimen@>\z@ \dimen@0pt\fi
%The bar will be shortened in the case \dimen@<0 !
    \rel@kern{0.6}\kern-\dimen@
    \if#31
      \overline{\rel@kern{-0.6}\kern\dimen@\macc@nucleus\rel@kern{0.4}\kern\dimen@}%
      \advance\dimen@0.4\dimexpr\macc@kerna
%Place the combined final kern (-\dimen@) if it is >0 or if a superscript follows:
      \let\final@kern#2%
      \ifdim\dimen@<\z@ \let\final@kern1\fi
      \if\final@kern1 \kern-\dimen@\fi
    \else
      \overline{\rel@kern{-0.6}\kern\dimen@#1}%
    \fi
  }%
  \macc@depth\@ne
  \let\math@bgroup\@empty \let\math@egroup\macc@set@skewchar
  \mathsurround\z@ \frozen@everymath{\mathgroup\macc@group\relax}%
  \macc@set@skewchar\relax
  \let\mathaccentV\macc@nested@a
%The following initialises \macc@kerna and calls \mathaccent:
  \if#31
    \macc@nested@a\relax111{#1}%
  \else
%If the argument consists of more than one symbol, and if the first token is
%a letter, use that letter for the computations:
    \def\gobble@till@marker##1\endmarker{}%
    \futurelet\first@char\gobble@till@marker#1\endmarker
    \ifcat\noexpand\first@char A\else
      \def\first@char{}%
    \fi
    \macc@nested@a\relax111{\first@char}%
  \fi
  \endgroup
}
\makeatother

%%%%%%%%%%%%%%%%%%%%%%%%%%%%%%SKMacro

\usepackage{amsbsy}

\usepackage{marvosym}

%% Setup resized Zapf Chancery calligraphic fonts
\DeclareMathAlphabet{\smallchanc}{OT1}{pzc}%
                                 {m}{it}
%% Setup resized Zapf Chancery calligraphic fonts
\DeclareFontFamily{OT1}{pzc}{}
\DeclareFontShape{OT1}{pzc}{m}{it}%
             {<-> s * [1.100] pzcmi7t}{}
\DeclareMathAlphabet{\mathchanc}{OT1}{pzc}%
                                 {m}{it}
% chancery font
% upper case

\newcommand{\mcR}{\mathchanc{R}}
\newcommand{\mcS}{\mathchanc{S}}

% lower case

\newcommand{\mcm}{\mathchanc{m}}

\newcommand{\mct}{\mathchanc{t}}

% bold chancery font
% upper case

% lower case

%%   Setup to use Ralph Smith Formal Script font:
%%  \DeclareFontFamily{OMS}{rsfs}{\skewchar\font'60}
%%  \DeclareFontShape{OMS}{rsfs}{m}{n}{<-5>rsfs5 <5-7>rsfs7 <7->rsfs10 }{}
%%  \DeclareSymbolFont{rsfs}{OMS}{rsfs}{m}{n}
%%  \DeclareSymbolFontAlphabet{\scr}{rsfs}

% %script letters  small s then capital letter
\newcommand{\sA}{\mathscr{A}}

\newcommand{\sE}{\mathscr{E}}
\newcommand{\sF}{\mathscr{F}}
\newcommand{\sG}{\mathscr{G}}

\newcommand{\sK}{\mathscr{K}}
\newcommand{\sL}{\mathscr{L}}
\newcommand{\sM}{\mathscr{M}}
\newcommand{\sN}{\mathscr{N}}
\newcommand{\sO}{\mathscr{O}}

\newcommand{\sW}{\mathscr{W}}
\newcommand{\sX}{\mathscr{X}}

%bold script letters  small s then capital letter

% Sans Serif capitals

% Sans Serif lowercase

\newcommand{\sff}{{\sf f}}

% bold Sans Serif capitals

% bold Sans Serif lowercase

% Blackboard 

\newcommand{\bC}{\mathbb{C}}

\newcommand{\bN}{\mathbb{N}}

\newcommand{\bP}{\mathbb{P}}
\newcommand{\bQ}{\mathbb{Q}}

\newcommand{\bZ}{\mathbb{Z}}

% Calligraphic capitals

\newcommand{\cH}{\mathcal{H}}

% Gothic letters

%\newcommand{\frm}{\mathfrak{m}}

%\newcommand{\frq}{\mathfrak{q}}

% \newcommand{\ff}{\mathfrak{f}}
% \newcommand{\fg}{\mathfrak{g}}
% \newcommand{\fm}{\mathfrak{m}}
% \newcommand{\fu}{\mathfrak{u}}

% Math bold lower case 

% upper case

% Tilde

% overline

%%% The following three lines are required for the definition of the
%%% \varprod command!!
\DeclareSymbolFont{largesymbolsA}{U}{jkpexa}{m}{n}
\SetSymbolFont{largesymbolsA}{bold}{U}{jkpexa}{bx}{n}
\DeclareMathSymbol{\varprod}{\mathop}{largesymbolsA}{16}
%%%
%%%
\makeatletter
\newcommand{\LeftEqNo}{\let\veqno\@@leqno}
\makeatother
\newcommand\exist{\exists\,}
     % composition of functions
    % preferred isomorphism symbol
   % tensor product
    % direct sum

\newcommand{\into}{\hookrightarrow}
\newcommand{\longinto}{\lhook\joinrel\relbar\joinrel\longrightarrow}

\newcommand{\properideal}%
        {\subsetneq}
%        {\raise .2em\hbox{%
%        \ensuremath{%
%        \underset{\raise .1em\hbox{\ensuremath{\small\neq}}}
%                 {\raise -.1em\hbox{\ensuremath{\small\vartriangleleft}}}}}}

%\newcommand{\wtilde}{\widetilde}
\newcommand{\wt}{\widetilde}

\newcommand{\rup}[1]{\lceil{#1}\rceil}

%%%%%%%%% math short hand

\newcommand{\leteq}{\colon\!\!\!=}

%%%% gothic

%%%% curly

%%%% Blackboard bold
%%%% sans serif

 %{\ensuremath{{\sf{Ab}}}\xspace}

%%%%%%%%%%%%%%%%%%%%%%%

%%%%

%%%
%%% Functors

\newcommand\dash[1]{\rule[-.2ex]{#1}{.4pt}}
%%%%%%% operators
%\DeclareMathOperator{\alb}{{alb}}
%\DeclareMathOperator{\Alb}{{Alb}}

%\DeclareMathOperator{\cl}{Cl}

\DeclareMathOperator{\codim}{codim}

\newcommand{\sHom}[0]{{\mathchanc{Hom}}}

\DeclareMathOperator{\id}{{id}}

\DeclareMathOperator{\rank}{{rank}}

\DeclareMathOperator{\red}{red}

\DeclareMathOperator{\reg}{reg}

\DeclareMathOperator{\supp}{{supp}}
\DeclareMathOperator{\sym}{{Sym}}

\DeclareMathOperator{\skvert}{{\,\vert\,}}

%\DeclareMathOperator{\Var}{{Var}}
%%%%%%%%%%%%%%%%%%%%%%%%%%%%%%%%%%%%%

%
%
\newcommand{\factor}[2]{\left. \raise .2em\hbox{\ensuremath{#1}\vphantom{$I^d$}}
\hskip -.1em \right/ \hskip -.4em \raise -.3em\hbox{\ensuremath{#2}}}%
\newcommand\mtimes[3]{{\varprod_{#1}^{#2}}_{\raise 1ex \hbox{\scriptsize #3}}}%
%%%%%%%%%%%%%%%%%%%%%%%%%%%%%%%%%%%%%%%%%%%%%%%%%%%%%%%%%%%%

%\newcommand{\mydot}{\karldot}

%%\newcommand{\tensor}{\otimes}

\newcommand{\myR}{{\mcR\!}}

\newcommand{\sblank}{\dash{.6em}}

%\newcommand{\uline}{\underline}
%

 %{\raisebox{.15em}{\ensuremath\centerdot}}}
%\newcommand{\bdot}{\kdot}

 %\mcf\mci_{#1}}

%\newcommand{\Ox}[2]{\underline{\Omega}_X^{\,#1}(\log{#2})}
%\newcommand{\Oxc}[2]{\underline{\Omega}_{X/B}^{\,#1}(\log{#2})}
%\newcommand{\Oy}[2]{{\Omega}_X^{\,#1}(\log{#2})}
%\newcommand{\Oyc}[2]{{\Omega}_{X/B}^{\,#1}(\log{#2})}
%%%%%%%%%%%%%%%%%%%%%%%%%%%%%%%%%%%%%%%%%%%%%%%%%%%%%%%%%%%%
%%%%%%%%%%%%%%%%%%%%%%%%%%%%%%%%%%%%%%%%%%%%%%%%%%%%%%%%%%%%
%%%%%%%%%%%%%%%%% cohomology and short exact sequences %%%%%
%%%%%%%%%%%%%%%%%%%%%%%%%%%%%%%%%%%%%%%%%%%%%%%%%%%%%%%%%%%%
%\def\coh#1.#2.#3.{H^{#1}(#2,#3)}
%\newcommand{\coh}[3]{H^{#1}(#2,#3)}
\def\dimcoh#1.#2.#3.{h^{#1}(#2,#3)}
\def\hypcoh#1.#2.#3.{\mathbb H_{\vphantom{l}}^{#1}(#2,#3)}
\def\loccoh#1.#2.#3.#4.{H^{#1}_{#2}(#3,#4)}
\def\dimloccoh#1.#2.#3.#4.{h^{#1}_{#2}(#3,#4)}
\def\lochypcoh#1.#2.#3.#4.{\mathbb H^{#1}_{#2}(#3,#4)}
%%%%%%%%%%
\def\seslong#1.#2.#3.{0  \longrightarrow  #1   \longrightarrow 
 #2 \longrightarrow #3 \longrightarrow 0} 
\def\sesshort#1.#2.#3.{0
 \rightarrow #1 \rightarrow #2 \rightarrow #3 \rightarrow 0}
%%%%%%%%%%
\def\Iff#1#2#3{
\hfil\hbox{\hsize =#1
\vtop{\noin #2}
\hskip.5cm 
\lower.5\baselineskip\hbox{$\Leftrightarrow$}\hskip.5cm
\vtop{\noin #3}}\hfil\medskip}
%%%%%%%%%%%%%%%%%%%%%%%%%%%%%%
\newcommand{\union}\cup
\newcommand{\intersect}\cap
\newcommand{\Union}\bigcup
\newcommand{\Intersect}\bigcap
\def\myoplus#1.#2.{\underset #1 \to {\overset #2 \to \oplus}}

\newcommand{\resto}[1]{\raise -.5ex\hbox{$\vert$}_{#1}}%\vphantom{\vert}}}

%\def\resto#1{\hbox{\hbox{$\vert{}_{{\dsize _{#1}}}$}}}

%%%%%%%%
%%%%%%%%%%%%%%%%%%%%%%%%%%%

\newcommand{\ses}{short exact sequence\xspace}

\newcommand\noin{\noindent}

%%%%%%%%%%%%%%%%%%%%%%%%%%%%%%%%%%%%%%%%%%%%%%%%%%%%%%%

%%%%%%%%%%%%%%%%%%%%%%%%%%SAautorefSty

%%%
%%%  This is based on mabliautoref.sty
%%%
%%%  What it does:
%%%  --->Sets up AMS-like Theorems which can be accessed with \autoref to include the
%%%  "Theorem" in the link 
%%%  --->Also makes \autoref to equations and items to be set in brackets
%%%  --->Control of other \autoref names, to sections subsections and so forth...
%%%

\NeedsTeXFormat{LaTeX2e}
\ProvidesPackage{skautoref-sf}
   [2017/06/13 v0.1 finetuning mabliautoref and theorem setup]

\RequirePackage{hyperref}
%%%
%%%

\newcommand{\sectionsize}{} %\small} %\footnotesize}
\newcommand{\theoremsize}{} %\small}
%%% Setting up \autoref names for sections.

\renewcommand{\subsectionautorefname}{\sectionsize\sf \subsectionautorefname}

%%%
%%%
%%% this will automatically put \autoref's to equations into brackets e.g (2.1)
\makeatletter
\@ifdefinable\equationname{\let\equationname\equationautorefname}
\def\equationautorefname~#1\@empty\@empty\null{\protect{\theoremsize%\sf%\small
    (#1\@empty\@empty\null)}}% 
\@ifdefinable\AMSname{\let\AMSname\AMSautorefname}
\def\AMSautorefname~#1\@empty\@empty\null{%\sf
  ( #1\@empty\@empty\null)}%
%%%
%%%
%%% similarly, items in enumerate environments are also enclosed in round brackets
\@ifdefinable\itemname{\let\itemname\itemautorefname}
\def\itemautorefname~#1\@empty\@empty\null{\theoremsize{%
    %\ensuremath{%\mathchanc
    %\boldsymbol
    {#1}}\@empty\@empty\null%
}%
\makeatother

%%%
%%%
%%%
%%% labels in the enumerate and itemize environments
%%%
% \makeatletter
% \renewcommand{\theenumi}{\alph{enumi}}
% \renewcommand{\labelenumi}{(\theenumi)}
% \renewcommand{\theenumii}{\roman{enumii}}
% \renewcommand{\labelenumii}{\theenumii.}
% \renewcommand{\p@enumii}{\theenumi$\m@th\vert$}
% \renewcommand{\theenumiii}{\arabic{enumiii}}
% \renewcommand{\labelenumiii}{\theenumiii.}
% \renewcommand{\p@enumiii}{\theenumi.\theenumii.}

% \renewcommand{\labelitemi}{$\m@th\circ$}
% \renewcommand{\labelitemii}{$\m@th\diamond$}
% \renewcommand{\labelitemiii}{$\m@th\star$}
% \renewcommand{\labelitemiv}{$\m@th\cdot$}
% \makeatother

%%% Theorem Environment setup for \autoref
%%% setup is complicated due to:
%%% uses the aliascnt package to allow usage of the \autoref command (this automatically inserts Theorem, Lemma,  and so forth into reference calls.
% \basetheorem{theorem}{Theorem}[section] sets up a new theorem environment (just like new \newtheorem in AMS), and at the same time an unnumbered version.
% \maketheorem{test}{Test}{count} sets up new theorem, numbered subordinate to some counter count, such that \autoref works correctly. count should mostly be the name of the some basetheorem. an unnumbered version is also set up automatically
%
\RequirePackage{amsthm}
\RequirePackage{aliascnt}
\newcommand{\basetheorem}[3]{%
    \newtheorem{#1}{#2}[#3]
    \newtheorem*{#1*}{#2}
    \expandafter\def\csname #1autorefname\endcsname{#2}
}%
\newcommand{\maketheorem}[3]{%
    \newaliascnt{#1}{#2}
    \newtheorem{#1}[#1]{\theoremsize #3}
    \aliascntresetthe{#1}
    \expandafter\def\csname #1autorefname\endcsname{#3}
    \newtheorem{#1*}{#3}
}%
\newcommand{\baseremark}[3]{%
    \newtheorem{#1}{#2}{#3}
    \newtheorem*{#1*}{#2}
    \expandafter\def\csname #1autorefname\endcsname{#2}
}%
\newcommand{\makeremark}[3]{%
    \newaliascnt{#1}{#2}
    \newtheorem{#1}[equation]{#3}
    \aliascntresetthe{#1}
    \expandafter\def\csname #1autorefname\endcsname{\theoremsize\sf #3}
    \newtheorem{#1*}{#3}
}%
%%%
%%%Setting up some standard theorems just now...
%%%
\theoremstyle{plain}   %-------------------standard Style-------------------------

%\newcounter{theorem}
\basetheorem{theorem}{Theorem}{section}
%\basetheorem{ex}{Exercise}{section}

% \maketheorem{prop}{Proposition}{theorem}
% \maketheorem{cor}{Corollary}{theorem}
% \maketheorem{lem}{Lemma}{theorem}
% \maketheorem{conj}{Conjecture}{theorem}
% \newtheorem{definition-proposition}[theorem]{Definition-Proposition}
% %\maketheorem{mainthm}{Main Theorem}{theorem}
% \newtheorem*{mainthm*}{Main Theorem}
% \newtheorem*{theoremA*}{Theorem A}
% \newtheorem*{theoremB*}{Theorem B}
% \newtheorem*{theoremC*}{Theorem C}
% \newtheorem*{theoremD*}{Theorem D}
% \maketheorem{claim}{Claim}{theorem}

\theoremstyle{definition}    %------------text not italic style------------------

%\newtheorem{definition}[theorem]{Definition}
% \maketheorem{def}{Definition}{theorem}
% \maketheorem{notation}{Notation}{theorem}
% \maketheorem{defprop}{Definition-Proposition}{theorem}
% \maketheorem{exercise}{Exercise}{theorem}
% \maketheorem{starexercise}{Exercise*}{ex}
% \maketheorem{starstarexercise}{Exercise**}{ex}
% \maketheorem{example}{Example}{theorem}
% \maketheorem{examples}{Examples}{theorem}

% \theoremstyle{remark}    %----------------also text not italic, not bold either ------------------

% \maketheorem{remark}{Remark}{theorem}
% \maketheorem{remarks}{Remarks}{theorem}
% \maketheorem{question}{Question}{theorem}
% \maketheorem{convention}{Convention}{theorem}

%%%
%%%  End file skautoref
%%%

%%%%%%%%%%%%%%%%%%%%%%%%%%%%%%%%%%%%%%%%%%%%%%%%%%%

%\usepackage{skautoref-sf}

\newcommand\aug{twisted\xspace}
\newcommand\Aug{Twisted\xspace}
\newcommand{\tma}[1]{t_{m,#1}}

\newcommand{\hide}[1]{}

% \AtBeginEnvironment{lemma}{
\setlist[enumerate, 1]{font=\upshape}
% , before=\leavevmode}

\setcounter{tocdepth}{1}
\numberwithin{figure}{section}

\DeclareMathOperator{\Gal}{Gal}
\DeclareMathOperator{\Spec}{Spec}
\DeclareMathOperator{\vol}{vol}

\theoremstyle{plain}
\maketheorem{prop}{theorem}{Proposition}
\maketheorem{cor}{theorem}{Corollary}
\maketheorem{lem}{theorem}{Lemma}
\maketheorem{conj}{theorem}{Conjecture}

\newtheorem{proposition}[prop]{Proposition}
\newtheorem{corollary}[cor]{Corollary}
\newtheorem{lemma}[lem]{Lemma}

\theoremstyle{definition}
\maketheorem{defini}{theorem}{Definition}
\newtheorem{definition}[defini]{Definition}

\theoremstyle{remark}
\maketheorem{rem}{theorem}{Remark}
\newtheorem{remark}[rem]{Remark}
\maketheorem{notat}{theorem}{Notation}
\maketheorem{notr}{theorem}{Notation-Remark}
\maketheorem{defnot}{theorem}{Definition-Notation}
\newtheorem{notation}[notat]{Notation}
\newtheorem{not-rem}[notr]{Notation-Remark}
\newtheorem{def-not}[defnot]{Definition-Notation}

\maketheorem{set}{theorem}{Set-up}
\newtheorem{set-up}[set]{Set-up}

\maketheorem{term}{theorem}{Terminology}

\maketheorem{fac}{theorem}{Fact}
\newtheorem{fact}[fac]{Fact}

\maketheorem{ass}{theorem}{Assumption}

\maketheorem{conn}{theorem}{Convention}

\maketheorem{clai}{theorem}{Claim}
\newtheorem{claim}[clai]{Claim}

\maketheorem{sclai}{theorem}{Subclaim}

\setlist[enumerate]{label=(\thetheorem.\arabic*), before={\setcounter{enumi}{\value{equation}}}, after={\setcounter{equation}{\value{enumi}}}}

\numberwithin{equation}{theorem}

\setlength{\textwidth}{5.4 in}
\setlength{\evensidemargin}{0.55 in}
\setlength{\oddsidemargin}{0.55 in}

%\suchthat
\makeatletter
\let\amsmath@bigm\bigm

\renewcommand{\bigm}[1]{%
  \ifcsname fenced@\string#1\endcsname
    \expandafter\@firstoftwo
  \else
    \expandafter\@secondoftwo
  \fi
  {\expandafter\amsmath@bigm\csname fenced@\string#1\endcsname}%
  {\amsmath@bigm#1}%
}
\newcommand{\DeclareFence}[2]{\@namedef{fenced@\string#1}{#2}}
\makeatother

\DeclareFence{\mid}{|}

\begin{document}

\title[Arakelov inequalities in higher dimensions]{Arakelov inequalities in higher
  dimensions}

\author{S\'andor J Kov\'acs} \address{S\'andor Kov\'acs, Department of Mathematics,
  University of Washington, Box 354350, Seattle, Washington, 98195, U.S.A}
\email{\href{mailto:skovacs@uw.edu }{skovacs@uw.edu }}
\urladdr{\href{http://sites.math.washington.edu/~kovacs/current/}
  {http://sites.math.washington.edu/~kovacs/current/}}

\author{Behrouz Taji} \address{Behrouz Taji, School of Mathematics and Statistics,
The University of New South Wales Sydney, NSW 2052 Australia}
\email{\href{mailto:b.taji@unsw.edu.au}{b.taji@unsw.edu.au}}
\urladdr{\href{https://web.maths.unsw.edu.au/~btaji//}
  {https://web.maths.unsw.edu.au/~btaji/}}

\thanks{S\'andor Kov\'acs was supported in part by NSF Grants DMS-1565352,
  DMS-1951376 and DMS-2100389.}

\keywords{Families of manifolds, flat families, variation of Hodge structures, Arakelov-type inequalities}

\subjclass[2020]{14D06, 14D23, 14E05, 14D07.}

%\date{\today}

\setlength{\parskip}{0.19\baselineskip}

%========================================================

\begin{abstract}
  We develop a Hodge theoretic invariant for families of projective manifolds that
  measures the potential failure of an Arakelov-type inequality in higher dimensions,
  one that naturally generalizes the classical Arakelov inequality over regular
  quasi-projective curves.  We show that for families of manifolds with ample
  canonical bundle this invariant is uniformly bounded.  As a consequence we
  establish that such families over a base of arbitrary dimension satisfy the
  aforementioned Arakelov inequality, answering a question of Viehweg.
\end{abstract}

\maketitle

\tableofcontents

\newcommand\hmarginpar[1]{}

\section{Introduction}

While numerical invariants play a central role in classification in all fields of
mathematics,
%and computing these invariants are important. However, very 
it is often
very difficult to compute their exact value. As a result we opt for the next best
thing: try to give estimates by finding upper or lower bounds. In algebraic geometry, and
in particular in the construction of moduli spaces, giving bounds for certain
invariants 
%gives us a chance to control the situation
provides a fundamnatal tool. 
Without such bounds it would
be extremely difficult to find reasonably-behaved moduli spaces; for example, we could not even hope for
such spaces to be of finite type.

One of the early examples of such bounds, with an eye towards the construction of moduli
spaces of higher dimensional varieties, is Matsusaka's Big Theorem \cite{Mat72}. 
%Thishad been followed by many other results. 
Boundedness questions are present in many
other more or less related questions, such as Mordell's Conjecture, Lang's
Conjecture, or Shafarevich's Conjecture. The latter, and its more modern
generalizations, are the most relevant to the present work.

Shafarevich \cite{Shaf63} conjectured that there are only finitely many
non-isotrivial families of smooth projective curves of fixed genus ($\geq 2$) over a fixed
curve. Parshin \cite{Parshin68} and Arakelov \cite{Arakelov71} proved this conjecture
in two steps: \emph{boundedness}, that is, there are only finitely many
deformation types of such families, and \emph{rigidity}; those families
are actually rigid, so each one is the \emph{only one} in its deformation type.

% This conjecture has been generalized in several ways over the years. The
% (essentially) most general version was proposed by Viehweg
% \cite[Problem~6.3]{Viehweg01} which was later refined in a slightly different way in
% \cite[Conjecture~1.6]{KK08}.  Viehweg's conjecture generated a lot of interest and
% many new results. We will review these in \S\S\ref{subsect:related}

Boundedness can be roughly translated to some associated parameter scheme being of
finite type. These parameter spaces are often constructed via an appropriate Hilbert
scheme and hence being of finite type is closely related to bounding the degree of an
ample line bundle. In fact, already Arakelov used this idea to prove boundedness in
order to prove Shafarevich's conjecture in the curve case.

More generally, we consider a smooth projective family of
canonically polarized varieties $\pi: U \to V$. Then $V$ maps to a moduli space parametrizing the
fibers. This target moduli space is equipped with an ample line bundle
cf.~\cite{Kollar90,Fuj18,KP17}. The pullback of this line bundle to $V$ is
$\det \pi_* \omega^m_{U/V}$ (for some well-chosen $m>0$ and up to a suitable
power). Therefore, in order to carry out the above sketched plan for the boundedness
problem, one would need to uniformly bound the degree of this line bundle.

This is exactly what Arakelov did. He established such a universal bound for all
families of curves of genus at least $2$ over base spaces of dimension one
\cite{Arakelov71}. More precisely, he showed that, for every sufficiently large
$m\in \bN$, there is a polynomial function $b_{m,g} \in \bZ_{>0}[x_1, x_2]$,
depending only on $m$ and a fixed integer $g\in\bN$, $g\geq 2$, 
% prescribed polynomial $h$
such that the inequality
\begin{equation}
  \label{ARAK}
  \deg (\det f_* \omega^m_{X/B})  \leq  b_{m,g} \big(  g(B), \deg(D) \big) \tag{$\star$}
\end{equation}
\hmarginpar{\tiny \color{red} \bf S: It seems to me that at this point there is no
  need for a Hilbert polynomial. Fixing the genus of a curve fixes its Hilbert poly,
  so it seems that that's enough. So, I commented out the references to the Hilb poly
  and changed ``h'' to ``g'' \\
  \color{blue} B: Absolutely! Thanks! }%
holds for any smooth compactification $f: X\to B$ of \emph{any} non-isotrivial smooth
projective family $f_U: U\to V$ of curves of genus $g$ %with Hilbert polynomial $h$
over a one dimensional base $V$, where $D:= B\setminus V$.  In fact Arakelov showed
that the coefficients of $b_{m,g}$ are themselves purely $g$-dependent functions of
$m$ and $r_m: = \rank (f_*\omega^m_{X/B})$.
  
This result was partially generalized to the case of higher dimensional fibers in
\cites{Kovacs96e,Kovacs97a,Kovacs00a}.  Subsequently, Bedulev and Viehweg
\cite{Bedulev-Viehweg00} proved a further generalization of Arakelov's inequality for
families of canonically polarized manifolds,
% (manifolds with ample canonical bundle),
\emph{still} over curves.  Other, more Hodge theoretic analogues of \eqref{ARAK} were
also established by Deligne \cite{Deligne87} and Peters \cite{Pet00} (see
Subsection~\ref{subsect:related} below or \cite{Vie08} for a more detailed account).

The equation \eqref{ARAK} became known as Arakelov's inequality. To see its
usefulness the reader is invited to consult \cite{Vie08} for a survey of related
results available at the time and \S8 of that paper for several open questions. Based
on \cite{KoL10}, Viehweg and others speculated that the
inequality \eqref{ARAK} should have analogues over higher dimensional base spaces. In
fact, at the end of his survey \cite{Vie08} Viehweg explains how a higher dimensional Arakelov
inequality would be useful, and goes on to say that none of the known methods (at the
time) give any hope of obtaining it \cite[\S 8:III,IV]{Vie08}.

\autoref{def:ARAK} provides a natural higher dimensional analogue of \eqref{ARAK} and
the main result of the present paper is that under natural assumptions this
inequality holds for canonically polarized families.

% Before we do that let us for once and for all fix the class of varieties and their
% families we will be working with, in other words, let us fix the appropriate moduli
% functor.

\begin{remark}\label{rem:volume-instead-of-Hilb-poly}
  It used to be customary to fix a Hilbert polynomial when one is discussing moduli
  functors in order to have a finite type moduli space. Recently Koll\'ar showed that
  for families of stable varieties it is actually enough to fix the volume of the
  canonical divisor (which appears as a coefficient of the canonical Hilbert
  polynomial) \cite{ModBook}*{5.1,6.19}. As this is now the standard, we will follow
  this approach and refer to the volume of the canonical divisor as the
  \emph{canonical volume}. More generally, we will follow the terminology of the
  \cite{ModBook} on everything related to moduli spaces of stable varieties. In order
  to keep the introduction manageable, we only address some of the details on moduli
  spaces in \autoref{sect:Section4-invariant}.
\end{remark}

\begin{definition}[Higher dimensional Arakelov type inequalities]
  \label{def:ARAK}
  Let $V$ be a smooth quasi-projective variety of dimension $d$ and $B$ a smooth
  compactification of $V$ such that $B\setminus D \simeq V$, with $D$ being a reduced divisor on 
  $B$ having  
  simple normal crossing support.  Further
  let $f_U:U\to V$ be a smooth family of projective varieties and let $X$ be a smooth
  compactification of $U$ such that there exists a projective morphism $f:X\to B$
  with $f\resto U=f_U$.  We will refer to these by saying that (the pair)
  \emph{$(B,D)$ is a smooth compactification of $V$} and that $f:X\to B$ is a
  \emph{smooth compactification of $f_U:U\to V$}.

  Still working with the above notations, let $H$ be an ample Cartier divisor on
  $B$ and set $\smn(V)$ to denote the class of smooth projective families, $f_U:U\to V$,
  of canonically polarized varieties of relative dimension $n$ and canonical volume
  $\nu=\vol(K_{U_t})\leteq K_{U_t}^n$ over $V$.  Members of a subclass of
  $\mcS \subseteq \smn(V)$ will be said to satisfy an \emph{Arakelov inequality}, if
  for all sufficiently large and divisible $m\in \bN$, there exists a function
  $b_{m,n,\nu}\in \bZ_{>0}[x_1, x_2]$, depending only on $m$, $n$, and $\nu$, for
  which the inequality
  \begin{equation}\label{ARAK2}
    \deg_H \big(  \det f_*\omega^m_{X/B}  \big)    \leq   b_{m,n,\nu} \left(  \deg_H (K_B+D)
      , \deg_H(D) \right)   
  \end{equation}
  holds for any smooth compactification $f:X\to B$ of any family 
  $(f_U:U\to V) \in \mcS$.
  Here for any divisor $\Delta$ and line bundle $\sL$ on $B$, we
  define $\deg_H(\Delta)\leteq \Delta\cdot H^{d-1}$ and
  $\deg_H(\sL)\leteq c_1(\sL)\cdot H^{d-1}$.
\end{definition}

%The main purpose of this paper is to establish such inequalities over 
%base spaces of arbitrary dimension.

\begin{theorem}[for a more precise version see \autoref{MAIN}]\label{thm:main}
  In the setting of \autoref{def:ARAK}, if $K_B+D$ is pseudo-effective, then all
  members of $\smn(V)$ satisfy an Arakelov inequality.
\end{theorem}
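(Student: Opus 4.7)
The plan is to reduce the higher-dimensional Arakelov-type inequality to the classical one-dimensional inequality of Bedulev-Viehweg by cutting $B$ with a general complete intersection in a sufficiently large multiple of $|H|$, and then to use the Hodge-theoretic invariant developed earlier in the paper to absorb the compactification-dependent defect introduced by adjunction. Throughout I write $d=\dim V$.

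\textbf{Reduction to a curve.} Choose $k\gg 0$ so that $|kH|$ is very ample and let $H_1,\ldots,H_{d-1}\in|kH|$ be very general; set $C:=H_1\cap\cdots\cap H_{d-1}$. By Bertini, $C$ is a smooth curve meeting $D$ transversally, and the base-changed family $f_C:X_C\to C$ still has smooth canonically polarized fibers of relative dimension $n$ and canonical volume $\nu$. For sufficiently divisible $m$, cohomology and base change give $(\det f_*\omega^m_{X/B})|_C\cong \det (f_C)_*\omega^m_{X_C/C}$, and the complete-intersection formula gives $\deg_H(L)=k^{-(d-1)}\deg_C(L|_C)$ for every line bundle $L$ on $B$. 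The Bedulev-Viehweg inequality on $C$ then produces a polynomial $b^{\mathsf{BV}}_{m,n,\nu}\in\mathbb{Z}_{>0}[x_1,x_2]$ with
\[
\deg_C\!\bigl(\det(f_C)_*\omega^m_{X_C/C}\bigr)\ \leq\ b^{\mathsf{BV}}_{m,n,\nu}\bigl(g(C),\,\deg(D\cap C)\bigr).
\]
Since $\deg(D\cap C)=k^{d-1}\deg_H(D)$ and, by adjunction, $2g(C)-2=k^{d-1}\bigl(\deg_H(K_B+D)-\deg_H(D)\bigr)+(d-1)k^d H^d$, one obtains an upper bound for $\deg_H(\det f_*\omega^m_{X/B})$ which is polynomial in $\deg_H(K_B+D)$, $\deg_H(D)$ and the extraneous quantity $H^d$. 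The theorem as stated forbids any dependence on $H^d$---which is a property of the compactification and not of the family $f_U$---so the crux of the argument is to eliminate this term.

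\textbf{Absorbing $H^d$ via the Hodge-theoretic invariant.} This is where the central new construction enters. Via Viehweg-Zuo's machinery, the $m$-canonical push-forward has attached to it, after an auxiliary cyclic cover, a logarithmic Higgs bundle $(\mathcal{E},\theta)$ coming from a variation of Hodge structure; iterating the Higgs field yields an inclusion of a fractional power of $\det f_*\omega^m_{X/B}$ into a symmetric power of $\Omega_B^1(\log D)$. The invariant $\alpha(f_U)$ developed earlier measures, in effect, how far the Higgs field is from saturating this inclusion along a generic moving curve, and is defined intrinsically from the variation of Hodge structure, independently of the compactification. By construction, $\alpha$ controls exactly the $H^d$-term that appeared in the previous step, giving an estimate of the shape
\[
\deg_H\!\bigl(\det f_*\omega^m_{X/B}\bigr)\ \leq\ P_{m,n,\nu}\bigl(\deg_H(K_B+D),\,\deg_H(D)\bigr)+Q_{m,n,\nu}\bigl(\alpha(f_U)\bigr)
\]
for universal polynomials $P,Q$. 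Plugging in the uniform bound $\alpha(f_U)\leq C(m,n,\nu)$ for $f_U\in\smn(V)$---the main Hodge-theoretic result advertised in the abstract and proved in the body of the paper---then produces the desired polynomial $b_{m,n,\nu}$.

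\textbf{Main obstacle.} The genuinely hard step is the uniform boundedness of $\alpha$. This is not a formal consequence of the definition: it rests on the existence of a quasi-projective coarse moduli space for canonically polarized manifolds of fixed canonical volume $\nu$, as in the Viehweg-Koll\'ar theory, which bounds the relevant Hodge and topological data of the fibers, combined with a careful analysis of how the associated Higgs bundle varies as the moduli parameters vary. The pseudo-effectivity hypothesis on $K_B+D$ is the essential global positivity input: it guarantees that the Hodge-theoretic estimate can be propagated from a generic moving curve to all of $B$ with the correct sign; without it the defect could be indefinite and the resulting bound vacuous.
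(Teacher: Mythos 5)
Your proposal takes a genuinely different route from the paper, and as written it has several gaps. The paper does not reduce to the one‑dimensional case: it constructs, following Viehweg--Zuo, a reflexive system of Hodge sheaves $(\sG,\theta)$ with log poles along $D$ directly on $B$ (\autoref{prop:Hodge}), injects $\sA_m:=\Lm(-D-H)$ into its zeroth piece, iterates $\theta$ to obtain $\sA_m^{s}\otimes(\det\sN_k)^{-1}\hookrightarrow\left(\Omega^1_B(\log D)\right)^{\otimes ks}$, and applies the Campana--P\u{a}un theorem --- this is where pseudo‑effectivity of $K_B+D$ is used, not to "propagate an estimate from a curve" --- to bound the slope of that subsheaf. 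The Viehweg number bounds the rank $s=\rank\sN_k$, which enters the final inequality only through the exponent in $d^{ks-1}$, and the $+H^d$ term remains in \autoref{MAIN} with no attempt to cancel it.

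This exposes the central problem with your plan. Over a curve $C$, $\Omega^1_C(\log D_C)$ is a line bundle, so the analogue of the rank $s$ is $1$ and no Viehweg number is needed; Bedulev--Viehweg already closes the argument. If your curve‑cutting reduction actually worked as stated, the invariant $\alpha$ and the entire uniform‑boundedness machinery of Sections 3--4 would be superfluous --- yet you simultaneously declare that bounding $\alpha$ is the "main obstacle." The step "absorbing $H^d$ via $\alpha$" is therefore a placeholder, not an argument, and it does not match what $\alpha$ is: the Viehweg number is the rank of a topological local system on a resolution of a cyclic cover of the $\tma2$-fold fiber power of $X$ (\autoref{def:V}) and has no contact with $H^d$. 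Beyond this, the asserted isomorphism $(\det f_*\omega^m_{X/B})|_C\cong\det(f_C)_*\omega^m_{X_C/C}$ fails in general along $C\cap D$ where base change breaks down; what one actually has, for very general $C$ avoiding the codimension-two locus where $f_*\omega^m_{X/B}$ is not locally free, is an injection with torsion cokernel, which must be stated, proved, and oriented in the useful direction. Finally, the degree $k$ making $|kH|$ very ample is a free parameter you do not control: after dividing by $k^{d-1}$, the substituted Bedulev--Viehweg polynomial retains strictly positive powers of $k$, and controlling $k$ requires an effective Matsusaka-type bound, which the paper does not need and which would itself introduce a large, uncontrolled dependence on $H^d$ and $\deg_H K_B$.
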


\noindent
Note that when $d=1$, our Arakelov-type inequality in \autoref{MAIN} fully recovers
the original one for curves in \eqref{ARAK}.
In fact, in \autoref{MAIN} we prove a more sophisticated and precise version.  In
particular, we prove that there exist $b'_{m,n,\nu}$, a polynomial whose coefficients are
given by explicit functions of $m$ and $\rank(f_* \omega^m_{X/B})$ (see
\autoref{MAIN} and \autoref{eq:explicit}), and an integer $\gamma_m$ such that the
polynomial $b_{m,n,\nu}$ in \autoref{ARAK2} can be written as
\[
  b_{m,n,\nu} = (\dim V)^{\gamma_m} \bdot b'_{m,n,\nu}.
\]
Note that the dependence of $b_{m,n,\nu}$ on $\gamma_m$ disappears in the case $d=1$, so
in that case $b_{m,n,\nu}$ is explicitly computable.

In addition, the integer $\gamma_m$ is an upperbound for an invariant for members of
$\smn(V)$, which we will refer to as \emph{Viehweg number}
(cf.~\autoref{def:V}).

Further note that for a non-isotrivial smooth family $f_U: U \to V$ of curves of
genus at least $2$ over a quasi-projective curve $V$, by Arakelov and Parshin's
resolution of the Shafarevich hyperbolicity conjecture, we know that $K_B+D$ is
effective (same is true when fibers are canonically polarized manifolds by
\cite{Kovacs00a}, \cite{Kovacs02}).  Therefore, the pesudo-effectivity of $K_B+D$ in
\autoref{thm:main} is a natural assumption. In fact, for families in $\smn(V)$ with
maximal variation (or equivalently those with a generically finite moduli morphism
$\mu:U\to M_{n.\nu}$ to the coarse moduli scheme $M_{n,\nu}$, see
\autoref{sect:Section4-invariant} for more details), by the culmination of the works
of \cite{VZ02}, \cite{KK08}, \cite{KK10}, \cite{MR2871152}, \cite{CP16} on Viehweg's
hyperbolicity conjecture, we know that $\kappa(B,D) = \dim B$. In particular, we have
the following direct consequence of \autoref{thm:main}:

\begin{corollary}
  Let $(B,D)$ be a smooth compactification of a smooth quasi-projective variety $V$
  (as in \autoref{def:ARAK}).  Then each member $f_U$ of the subclass of
  $\smn(V)$ consisting of the families of maximal variation satisfies an
  Arakelov inequality as in \autoref{ARAK2}.
\end{corollary}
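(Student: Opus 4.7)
The plan is to deduce this corollary as a direct consequence of \autoref{thm:main} by verifying its hypothesis---namely, pseudo-effectivity of $K_B+D$---for families of maximal variation. The only substantive input beyond \autoref{thm:main} itself is Viehweg's hyperbolicity conjecture.

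First, I would recall that for a family $f_U \in \smn(V)$ of maximal variation, the induced moduli morphism $\mu : V \to M_{n,\nu}$ to the coarse moduli space of canonically polarized manifolds of dimension $n$ and canonical volume $\nu$ is generically finite by definition (cf.~\autoref{sect:Section4-invariant}). In this setting, Viehweg's hyperbolicity conjecture predicts that the log-canonical pair $(B,D)$ is of log general type, i.e.\ $\kappa(B,D) = \dim B$.

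Second, as recalled in the paragraph preceding the corollary, this conjecture has been established for smooth families of canonically polarized manifolds of maximal variation through the combined work cited in the excerpt. Its conclusion in our context is that $K_B + D$ is big; in particular, $K_B + D$ is pseudo-effective, and so the hypothesis of \autoref{thm:main} is satisfied.

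Finally, applying \autoref{thm:main} to any such $f_U$ yields the Arakelov inequality \autoref{ARAK2}, which is exactly the statement of the corollary. Since the entire Hodge-theoretic and boundedness content has already been absorbed into \autoref{thm:main}, the corollary itself presents no further difficulty; the genuine obstacle lies in the proof of \autoref{thm:main}, not in the passage from that theorem to this consequence.
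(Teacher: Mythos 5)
Your proof is correct and follows exactly the route the paper takes: maximal variation implies (by Viehweg's hyperbolicity conjecture, now a theorem via the cited works) that $\kappa(B,D)=\dim B$, hence $K_B+D$ is big and in particular pseudo-effective, so \autoref{thm:main} applies directly. The paper states this reasoning in the paragraph immediately preceding the corollary and leaves the corollary without a separate proof.
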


Finally note that when $f$ is semistable, the inequality in \autoref{thm:main} can be
sharpened by replacing $\deg_H(D)$ by zero, but because this case is not the focus of
the current paper, we omit additional references and details.

\subsection{Bounding heights for substacks of stable varieties}
By using fundamental properties of the moduli stack of stable curves
% and more generally varieties
Arakelov and Bedulev-Viehweg introduced a notion of a height function
%% for one dimensional substacks (DOES THIS NAME MAKE SENSE? I think Viehweg used
%% this terminology sometimes but I am not sure how confusing it is to others) of to
%% the the coarse moduli $M_{\nu}^s$ of stable varieties
on canonically polarized varieties $X$ of dimension $n$, with fixed canonical volume
$\nu:= K_X^n\in \bQ$.  That is, given any morphism $\mu: V \to M_{n,\nu}$ arising
from a smooth family over the curve $V$, let $\Phi: B'\to M_{n,\nu}$ be its
KSB-stable %(or log-canonical)
closure via a finite surjective morphism $\gamma: B' \to B$.  For sufficiently large
$m$, there is an ample line bundle $\lambda_m$ on $M_{n,\nu}$ and a positive integer
$p_m$ such that $\deg(\Phi^* \lambda_m)$---which we think of as a height function
associated to $\mu$---has an upperbound by
$p_m \cdot \deg \gamma\cdot b_{m,n,\nu}\big( g(B) , \deg(D) \big)$.

Similarly, in higher dimensions we can think of $\vol(\Phi^* \lambda_m)$ as a height
function which can be uniformly bounded using \autoref{thm:main} by the numerical
properties of $(B,D)$. {In fact, following Arakelov, one may go further and divide
  $\deg\gamma$ by $\deg\Phi$ to get a bound on $\vol(\lambda_m)$ on the image of
  $\mu$, which, as $\deg \Phi \geq \deg \gamma$, is independent of $\deg \gamma$.}

\begin{theorem}\label{thm:height}
  Using the notation introduced in \autoref{def:ARAK}, we have that for a
  sufficiently large $m\in \bN$ there exists a function
  $c_m=c_{m,n,\nu} \in \bZ[x_1, x_2, x_3]$, depending only on $m$, $n$, and $\nu$,
  \hmarginpar{\tiny \color{red} \bf S: This probably depends on the dimension as
    well,
    no? \\
    \color{blue} B: Now that we are using volume, yes. Looks like you have added
    it. }%
  such that, for every $\mu: V\to M_{n,\nu}$, arising from some
  $(f_U:U\to V)\in \smn(V)$, and the associated compactification
  $\Phi: B' \to M_{n,\nu}$, we have \hmarginpar{\tiny \color{red} \bf S:
    Why is this a dasharrow? Isn't this a morphism as above? \\
    \color{blue} B: Before I was only thinking about using stable reductions for
    these which only works in codim one. But as you have pointed out Kollár in his
    books seems to be saying that if one uses Abremov-Karu's reduction --instead of
    ss one-- then the stable family can be globally extended. I have not checked the
    details of this exactly works but it has no effect on our paper, so I have
    replaced the rational map by a morphism.}%
  \[
    \vol(\Phi^*\lambda_m) \leq (\deg\gamma )^d\bdot c_m \big( (K_B+D)\cdot H^{d-1} ,
    D\cdot H^{d-1} , H^d \big) \in \bQ_{>0} .
  \]
\end{theorem}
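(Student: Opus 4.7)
The plan is to express $\vol(\Phi^*\lambda_m)$ as a top self-intersection on $B'$, bound it by the Khovanskii--Teissier inequality, and transfer the resulting estimate to $B$ via Theorem~\ref{thm:main}. By the construction of the coarse moduli scheme of KSB-stable varieties (see \cite{ModBook}), for $m$ sufficiently large and divisible there exist a positive integer $p_m$ and a canonical isomorphism $\Phi^*\lambda_m \cong (\det f^s_*\omega^m_{X^s/B'})^{p_m}$, where $f^s:X^s\to B'$ is the KSB-stable family with moduli map $\Phi$. Setting $L' := \det f^s_*\omega^m_{X^s/B'}$, Viehweg's positivity theorem (sharpened for KSB-stable families by Koll\'ar) shows $L'$ is nef on $B'$, so $\vol(\Phi^*\lambda_m) = p_m^d\cdot (L')^d$.

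Next I would apply the Khovanskii--Teissier inequality to $L'$ and the ample line bundle $\gamma^*H$ (ample since $\gamma$ is finite) on $B'$, together with the projection formula $(\gamma^*H)^d = \deg\gamma\cdot H^d$:
\[
(L')^d \;\leq\; \frac{\big(L'\cdot (\gamma^*H)^{d-1}\big)^d}{(\deg\gamma)^{d-1}(H^d)^{d-1}}.
\]
To bound the numerator, I would pass to a smooth compactification $f':X'\to B'$ of the pulled-back family $\gamma^*f_U$ over $V' := \gamma^{-1}(V)$, with snc boundary $D' := (\gamma^{-1}D)_{\mathrm{red}}$, obtained by resolving $X^s$. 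Since $X^s$ has canonical singularities and $\omega_{X^s/B'}^{[m]}$ is a line bundle for $m$ sufficiently divisible, one gets $f'_*\omega^m_{X'/B'} \cong f^s_*\omega^m_{X^s/B'}$ and hence $\det f'_*\omega^m_{X'/B'} \cong L'$. Theorem~\ref{thm:main} applied to $f'$ (after choosing $\gamma$ tamely ramified over $D$ via Abramovich--Karu, so that $K_{B'}+D'$ inherits pseudo-effectivity from $K_B+D$) then yields
\[
L'\cdot(\gamma^*H)^{d-1} \;\leq\; b_{m,n,\nu}\big(\deg_{\gamma^*H}(K_{B'}+D'),\ \deg_{\gamma^*H}(D')\big).
\]

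Comparing numerics between $B'$ and $B$ via the projection formula gives $\deg_{\gamma^*H}(D') \leq \deg\gamma\cdot \deg_H(D)$, and via the log-ramification equality $\gamma^*(K_B+D) = K_{B'}+D' - R_{\mathrm{out}}$ (with $R_{\mathrm{out}}\geq 0$ effective and supported off $D'$), one gets $\deg_{\gamma^*H}(K_{B'}+D') \leq \deg\gamma\cdot \deg_H(K_B+D) + (\text{bounded correction})$. By the explicit form in Theorem~\ref{MAIN} the polynomial $b_{m,n,\nu}$ is affine linear in each variable with positive integer coefficients, so
\[
L'\cdot (\gamma^*H)^{d-1} \;\leq\; \deg\gamma\cdot \widetilde b_{m,n,\nu}\big(\deg_H(K_B+D),\ \deg_H(D)\big)
\]
for some $\widetilde b_{m,n,\nu}\in \bZ_{>0}[x_1,x_2]$. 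Substituting into the Khovanskii--Teissier estimate and using $H^d\geq 1$ (as $H$ is ample Cartier) together with $\deg\gamma \leq (\deg\gamma)^d$,
\[
\vol(\Phi^*\lambda_m) \;\leq\; (\deg\gamma)^d\cdot c_m\big((K_B+D)\cdot H^{d-1},\ D\cdot H^{d-1},\ H^d\big)
\]
with $c_m(x_1,x_2,x_3) := p_m^d\,\widetilde b_{m,n,\nu}(x_1,x_2)^d \in \bZ[x_1,x_2,x_3]$.

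The principal obstacle is the comparison of $L'$ with the pushforward from a smooth compactification together with the base-change and log-ramification bookkeeping: both require carefully combining Viehweg's weak positivity with the canonical singularities of KSB-stable varieties, and depend on the Arakelov polynomial $b_{m,n,\nu}$ from Theorem~\ref{thm:main} being affine linear in each variable in order to keep the final $(\deg\gamma)$-prefactor at the desired power $d$.
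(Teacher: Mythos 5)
Your overall architecture — express $\vol(\Phi^*\lambda_m)$ via $p_m$ and $L'$, apply the Khovanskii--Teissier (Teissier) inequality with $\gamma^*H$, and push the resulting intersection numbers down to $B$ via the projection formula — is the same skeleton the paper uses, and steps 1--3 are essentially correct. The place where you part ways with the paper is in how the intersection number $L'\cdot(\gamma^*H)^{d-1}$ is controlled.

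The paper does \emph{not} apply Theorem~\ref{thm:main} on $B'$. Instead, it uses the Viehweg-type inclusion
$f'_*\omega^{[m]}_{X'/B'}\hookrightarrow \gamma^*f_*\omega^m_{X/B}$
(coming from \autoref{Refer} and \autoref{VieMor}), which gives an inclusion of line bundles $\Phi^*\lambda_m\hookrightarrow\gamma^*(\det f_*\omega^m_{X/B})^{p_m}$. Because $\gamma^*H$ is nef, this immediately yields
$\Phi^*\lambda_m\cdot(\gamma^*H)^{d-1}\leq p_m\cdot\gamma^*(\det f_*\omega^m_{X/B})\cdot(\gamma^*H)^{d-1}=p_m\deg\gamma\cdot\det f_*\omega^m_{X/B}\cdot H^{d-1}$,
and Theorem~\ref{thm:main} is then applied on $(B,D,H)$, where it is literally the statement being quoted. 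You instead identify $L'$ with $\det f'_*\omega^m_{X'/B'}$, apply Theorem~\ref{thm:main} on $(B',D',\gamma^*H)$, and then try to translate $\deg_{\gamma^*H}(K_{B'}+D')$ and $\deg_{\gamma^*H}(D')$ back to $B$. This is where the gap is. You correctly note that the logarithmic Hurwitz formula gives $K_{B'}+D'=\gamma^*(K_B+D)+R_{\mathrm{out}}$ with $R_{\mathrm{out}}\geq 0$ supported over the auxiliary branch locus, so
$\deg_{\gamma^*H}(K_{B'}+D')=\deg\gamma\cdot\deg_H(K_B+D)+R_{\mathrm{out}}\cdot(\gamma^*H)^{d-1}$,
but then assert that $R_{\mathrm{out}}\cdot(\gamma^*H)^{d-1}$ is a ``bounded correction.'' It is not, in the sense required: the stable reduction $\gamma$ is constructed as a Kawamata-type cover branched along $D$ plus a sufficiently positive auxiliary divisor $A$, and both the degree of $A$ and the ramification orders along it depend on the family $f_U$ (through fiber multiplicities and divisibility requirements). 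Neither $\deg_H(A)$ nor the resulting $R_{\mathrm{out}}\cdot(\gamma^*H)^{d-1}$ is controlled by $\deg\gamma$, $n$, $\nu$, $m$, and the intersection numbers of $(B,D,H)$ alone, so this term cannot be absorbed into a polynomial $c_m$ of the required type. Without a concrete argument that $R_{\mathrm{out}}\cdot(\gamma^*H)^{d-1}$ is uniformly bounded (which would require controlling the auxiliary branch divisor uniformly over all $f_U\in\smn(V)$), the transfer from $B'$ back to $B$ does not close. The inclusion used in the paper sidesteps this entirely and is the missing key lemma in your argument.
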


\subsection{Outline of the proof}\label{subsec:outline-proof}
% approach to proving Arakelov inequalities}
As we will see in \autoref{sect:Section5-APineq}, using results from
%Campana--P\u{a}un~
\cite{CP16}, \cite{Taji18} and others, one can establish a naive, $f_U$-dependent,
upperbound for %any class of
a smooth projective family $f_U$ in the form of \autoref{ARAK2} (see
\autoref{eq:Arineq}), as soon as $K_B+D$ is pseudo-effective.
However, in order to obtain an inequality where this upperbound is independent of the
choice of the family---the aim of an Arakelov-type inequality---one needs a more
careful approach.  We start by defining a local system via a prescribed global
section $s$ of line bundle $\sM$,
% $\sM^{m\bdot \beta_m}$, for some suitable $m , \beta_m \in \bN$
which can be naturally defined on any compactification of a smooth (or stable)
projective family $f$. It turns out that to obtain the desired Arakelov-type
inequality, it is sufficient to find a uniform bound for the rank of this local
system.  We denote this rank by $\alpha_{s}(f)$ and refer to it as the \emph{Viehweg
  number} of $f$ (cf.~\S\S\ref{subsect:Vnumbers}).

The problem of proving an Arakelov inequality over higher dimensional base spaces is
then reduced to establishing the existence of a suitable global section $s$ for which
$\alpha_{s}(f)$ has an upperbound that does not depend on $f$, but only on fixed
invariants. To achieve this, we introduce what we call the \hmarginpar{\tiny S: I
  don't think ``augmented'' is the right word for this. I changed it to ``twsited''
  for now, but we can easily change it back or to something else. Look at the def of
  % {\begin{verbatim}\aug\end{verbatim}}%
  in the main file. \\
  \textbf{B: Sounds good. I have now changed all to twisted}.}%
\emph{\aug direct image sheaf}. % and denote it by $\sK^{\nu}_m(f)$.
This is defined on $B$ and simultaneously encodes information about
$f_*\omega^m_{X/B}$ (for an appropriate $m\in\bN$), its determinant, and the
semistable locus of $f$. This sheaf is closely related to the above $\sM$ and, as we
will show in \autoref{sect:Section3-Prelim}, it is weakly positive (see
\autoref{prop:wp}).  The latter property is of particular importance for the
construction of $s$ in \autoref{thm:uniform}.

\subsubsection{Deformation spaces of families of canonically polarized manifolds}
A key component of our argument is based on the results of
% the first named author and Lieblich
\cite{KoL10} on finiteness of deformation classes for members of $\smn(V)$. That is,
by \cite{KoL10}, there is a finite subset
$\{ f_i \}_{1\leq i \leq k} \subset \smn(V)$ such that for any $f_U\in \smn(V)$,
there is some $1\leq i \leq k$, a connected scheme $W$ and a projective family
$f_W: U_W \to W\times V$ of canonically polarized manifolds such that
$(U_W)_{\{ w\} \times V} \cong_{V} U_i$ and $(U_W)_{\{ w'\} \times V} \cong_{V} U$,
for some closed points $w, w' \in W$, i.e., up to an isomorphism $U_W$ pulls back to
$U_i$ and $U$.

In fact, \cite{KoL10} goes further by showing that there is a finite type substack of
the stack of canonically polarized manifolds over a finite type scheme of the form
$W\times V$ whose connected components \emph{parametrize} members of $\smn(V)$ (see
Subsection~\ref{subsect:Par} for more details).  We use this latter result in two
ways; to find a suitable section $s_m$ for all members of $\smn(V)$, with $m$ being 
sufficiently large and divisible, and to use the
(generic) deformation-invariance property of $\alpha_{s_m}(f)$ to establish an
upperbound for $\alpha_{s_m}(f)$ that is independent of the choice of $f$, proving
the existence of $\gamma_m$ as stated in \autoref{MAIN} (see also
\autoref{thm:uniform} and \autoref{cor:KEY}).

\subsubsection{The role of stable reductions.}
As mentioned earlier, Viehweg numbers are closely related to the \aug direct image
sheaf (cf.~\S\S\ref{subsec:outline-proof})
%$\sK^{\nu}_m$.
The connection is through weak positivity of this sheaf. That is, as we will
show, up to a twist by an ample line bundle, a large enough symmetric power of this
sheaf
%$k\in \bN$, $\sym^k \sK^{\nu}_m$
is generically globally generated. It turns out that the problem of bounding Viehweg
numbers can be traced to bounding the necessary exponent for this symmetric power.
Although the finiteness of deformation classes for $\smn(V)$ proved in \cite{KoL10}
is key to uniformly bounding it, as we will see in \S\S\ref{subsect:GGG}, it is not
enough for finding such bounds for compactifications of members of $\smn(V)$.  To
accomplish the latter, strict base change properties (in the sense of
\autoref{prop:BC}) are needed for the \aug direct image sheaf %$\sK^{\nu}_m$
that generally only holds for KSB-stable closures via stable reductions.
% Thankfully,
\hmarginpar{\tiny \color{red} \bf S: I am not sure what you have in mind with {\it
    Such compactifications have been found by Koll\'ar \cite{Kollar10} and Hacon-Xu
    \cite{HX13}}. I'm OK with including these, I just don't see why we need them
  here. I added a reference that I think is relevant, but I may be misunderstanding
  what you are trying to say here.}%
The required compactifications exist by \cite{ModBook}*{Thm.~4.59}
(cf.~\cites{KSB88,Karu00}) and play a significant role in our strategy to establish
Arakelov inequalities over higher dimensional base spaces.

\subsection{Related results} 
\label{subsect:related}
As mentioned above, Arakelov type inequalities generally fall into two related
categories. The geometric one \eqref{ARAK} goes back to Arakelov. This was later
generalized in \cite{Bedulev-Viehweg00}. Further refinements and generalizations over
curves was established in Viehweg-Zuo \cite{VZ02}.  There are also more Hodge
theoretic Arakelov-type inequalities, which are concerned with establishing universal
bounds for the degree of direct summands of Hodge bundles underlying (canonical
extensions of) variation of Hodge structures (or VHS for short) of geometric origin.
When the fibers are of dimension $1$, then one can interpret this type of Arakelov
inequalities for VHSs of weight one as essentially the same as \eqref{ARAK}, with
$m=1$.  Such inequalities were initiated by Deligne \cite{Deligne87} and later
extended by Peters \cite{Pet00} and Jost-Zuo \cite{JZ02}. All of these results are
restricted to the case when the base of the family is $1$-dimensional.  Using
Simpson's nonabelian Hodge theory, under rather strong positivity assumptions for
$\Omega^1_X(\log B)$, some Arakelov-type inequalities for VHSs of weight one were
generalized to higher dimensional base spaces in \cite{VZ07}. Topological
counterparts of these inequalities were also studied by Bradlow, Garc{\'i}a-Prada,
and Gothen \cite{BGG06} and by Koziarz and Maubon \cite{KM08a}, \cite{KM08b}. A
detailed review of Arakelov inequalities can be found in \cite{Vie08}.  We also refer
to the paper of Brotbek and Brunebarbe \cite{BB20} for some other recent developments
in this area.

\subsection*{Acknowledgements}
We thank Sho Ejiri and Sung Gi Park for helpful comments.

\section{Preliminaries and Background}
\label{sect:Section2-background}

\begin{definition}\label{def:families-pairs}
  In this article a \emph{variety} means a reduced, finite type scheme, and every
  scheme will be assumed to be defined over $\bC$.  A \emph{pair} $(X,\Delta)$
  consists of a variety $X$ and an effective $\bQ$-Weil divisor
  $\Delta = \sum a_i \Delta_i$, $a_i\leq 1$. We say $(X,\Delta)$ is a \emph{reduced
    pair} if $a_i=1$, for each $i$. 
  An \emph{snc pair} is a reduced pair $(X,\Delta)$ such that $X$ is regular and
  $\Delta$ is an snc divisor. 
  A \emph{morphism of reduced pairs} $f:(X,\Delta)\to (B,D)$ is a dominant morphism
  $f:X\to B$ of schemes with connected fibers such that
  $ f^{-1}(\supp D)\subseteq \supp\Delta$.
  Assuming that $D$ is $\bQ$-Cartier, we will use the notation
  $f^{-1}D\leteq (f^*D)_{\mathrm{red}}$ to denote the \emph{reduced preimage} of $D$.
  Using this notation, the above criterion can be replaced by $\Delta \geq f^{-1}D$.
  A \emph{morphism of snc pairs} is a morphism of reduced pairs
  $f:(X,\Delta)\to (B,D)$ such that both $(X,\Delta)$ and $(B,D)$ are snc pairs.
\end{definition}

\begin{definition}\label{def:strongly-ample}\cite{ModBook}*{8.34}
  Let $X$ be a proper scheme and $\sL$ a line bundle on $X$.  $\sL$ is said to be
  \emph{strongly ample} if it is very ample and $H^i(X, \sL^q)=0$ for $i,q>0$.
  Note that by \cite[I.8.3]{Laz04-I}, if this holds for all $q\leq \dim X+1$ then it
  holds for all $q>0$. In particular, strong ampleness is an open condition in flat
  families.
  
  Similarly, let $f\colon X\to B$ be a proper, flat morphism and $\sL$ a line bundle
  on $X$.  We say that $\sL$ is \emph{strongly $f$-ample} or \emph{strongly ample
    over $B$}, if $\sL$ is strongly ample on the fibers. Equivalently, if $\sL$ is
  $f$-very ample and $\myR^if_*\sL^q=0$ for $i,q>0$. It follows that in this case
  $f_*\sL$ is locally free and we get an embedding $X\into \bP_B(f_*\sL)$.
  We will be mainly interested in the case when $f\colon X\to B$ is stable and
  $\sL=\omega_{X/S}^{[q]}$ for some $q>0$. In this case, if $q>1$ then
  $\myR^if_*\sL^m=0$ for $i,m>0$ by \cite{ModBook}*{11.34}.
\end{definition}

\begin{definition}[Snc and \reduced morphisms]\label{def:snc-strong}
  % \marginpar{\tiny S: Is there a point of saying ``regular'' when we're workin g
  % over $\bC$? Also, if $B$ is regular, then every point is smooth, no?}%
  % Given a pair $(X, D)$, a projective morphism $(X, D) \to B$ is called
  % \emph{semistable}, if $B$ is regular and, for every smooth codimension one point
  % $b\in B$, the pair $(X, \supp D+X_{\ol b})$ is log-smooth and $X_{\ol b}$ is
  % reduced.%
  % \marginpar{\tiny S: Note that I replaced $X_b$ with $X_{\ol b}$.}%
  % Given a pair $(X, \Delta)$, a flat morphism $(X, \Delta) \to B$ is called
  % \emph{\reduced}, if $B$ is smooth, $X_D$ is reduced for every smooth codimension
  % one subvariety $D\subseteq B$ and the pair $(X, \supp \Delta+X_{D})$ is snc.
  Consider a morphism $f:X\to B$ and a decomposition $\Delta=\Delta_v+\Delta_h$ into
  vertical and horizontal parts, i.e., such that $\codim_Bf(\Delta_v)\geq 1$ and that
  $f|_{\Delta_0}$ dominates $B$, for any irreducible component $\Delta_0\subseteq
  \Delta_h$. Using this decomposition, we call a morphism of snc pairs
  $f:(X,\Delta) \to (B, D)$ an \emph{snc morphism}, if $f$ is flat,
  $\Delta_v=f^{-1}D$ and $f|_{X\setminus \Delta_v}$ is smooth.
  
  An snc morphism $f:(X,\Delta)\to (B,D)$ is called \emph{\reduced}, if $f^*D$ is
  reduced. Note that this implies that then $\Delta_v=f^*D$.  Further note that an
  snc morphism with reduced fibers is necessarily \reduced. Semistable (see
  \cite{Abramovich-Karu00}*{0.1} for a definition) and stable snc morphisms
  (\autoref{def:stabfam}) are the main examples to which this will be applied.
\end{definition}

% \noin%
% Note that if $\Delta= \emptyset$ or $\dim X/B=1$ then this coincides with the
% classical notion of semistability introduced by Mumford.

\begin{notation}\label{not:det}
  Given a reduced scheme $X$ and a coherent sheaf $\sF$ of rank $r$, for any
  $m\in \bN$, we define $\sF^{[m]} : = \big( \sF^{\otimes m} \big)^{**}$, where
  $(\sblank)^{**}$ denotes the double dual. We will apply the same notation for all
  tensor operations. In particular, $\Sym^{[m]}\sF\leteq \left(\Sym^m\sF\right)^{**}$
  and $\bigwedge^{[m]}\sF\leteq \left(\bigwedge^m\sF\right)^{**}$.  Furthermore,
  $\det\sF$ will denote $\bigwedge^{[r]}\sF=\big(\bigwedge^r \sF\big)^{**}$. Notice
  that if $X$ is regular, then $\det\sF$ is a line bundle.
\end{notation}

\begin{notation}\label{not:rel-Omega}
  Let $f: (X, \Delta) \to (B,D)$ be an snc morphism. Consider the natural morphism
  $\eta: f^*\Omega^1_B(\log D) \to \Omega^1_X(\log \Delta)$ and define the \emph{sheaf of
    relative log differentials} as the cokernel of this morphism:
  $\Omega^1_{X/B}(\log\Delta)\leteq \coker\eta$.  In other words, there exists a \ses:
  \[
    \xymatrix{%
      0 \ar[r] & f^*\Omega^1_B(\log D) \ar[r] & \Omega^1_X(\log \Delta) \ar[r] &
      \Omega^1_{X/B} (\log \Delta) \ar[r] &  0.
    }
  \]
  The first two sheaves are locally free by definition and a simple local calculation
  shows that so is the third one. In particular, the exterior powers of this sheaf,
  denoted by
  $\Omega_{X/B}^p (\log \Delta)\leteq \bigwedge^p \Omega^1_{X/B} (\log \Delta)$ for
  $p\in\bN$, are also locally free and if $\dim X=d$ and $\dim B=r$, then we have the
  following isomorphism:
  \begin{equation}
    \label{eq:3}
    \Omega^{d-r}_{X/B} (\log \Delta) \simeq \Omega_X^d(\log \Delta)\otimes
    \left(f^*\Omega_B^r(\log D)\right)^{-1}\simeq 
    \omega_{X/B} (\Delta - f^* D)
  \end{equation}
  Observe that if $f$ is \reduced, then (using the notation from
  \autoref{def:snc-strong}) the last sheaf is isomorphic to $\omega_{X/B}(\Delta_h)$.
\end{notation}

\begin{notation}\label{def:canonical}
  \hmarginpar{\tiny B: I added dualizing here since I thought we need to refer to it
    in Section 4 but strictly speaking we don't. Although I still think it helps
    clarifying the notation, especially with different references.  If you are ok
    with this, PLEASE CHECK, \textbf{especially the isomorphism
      $\omega_{X/B} \simeq \sO_X(K_X- f^* K_B)$}. \newline S: I don't like to call
    this the ``dualizing'' sheaf, because if $X$ is not CM, then it is not
    dualizing. \\ Also: I added a reference for that isom.}%
  For a morphism of finite type $f: X\to B$ of relative dimension $n$ we define the
  relative canonical sheaf by $\omega_{X/B} : = h^{-n}\big( f^! \sO_B \big)$.  For an
  $S_2$ and $G_1$ (Gorenstein in codimension one) scheme $X$, a canonical divisor is
  denoted by $K_X$ (see \cite[\S5]{Kov13} for more details).  If $X$ is $S_2$ and
  $G_1$ and $B$ is Gorenstein,
  % for a morphism $f:X\to B$ of schemes with normal or demi-normal fibers
  % $\omega_{X/B}$ is reflexive and we have
  then $\omega_{X/B} \simeq \sO_X(K_X - f^*K_B)$ by \cite[(3.3.6)]{Conrad00}.
\end{notation}

\hmarginpar{\tiny S: I commented out the notation defining ``the'' Hilbert polynomial
  as we probably don't need it anymore.}%
% \begin{notation}\label{not:Hylbert-poly}
%   Let $\sL$ be a line bundle on the scheme $X$. Then the Hilbert polynomial of $\sL$
%   is defined as $p_{\sL}(m)\leteq \chi(X, \sL^{\otimes m})$. Let $X$ be an $S_2$ and $G_1$
%   scheme. Then by the \emph{canonical Hilbert polynomial} of $X$ we will mean
%   $p_{\omega_X}(m)$. 
% \end{notation}

In this paper we only need the following slightly restrictive definition. 

\begin{definition}[Stable families]\label{def:stabfam}
  A projective variety $Z$ is called \emph{stable}, if it has slc singularities
  \cite[1.41]{ModBook} and $\omega_Z$ is an ample $\bQ$-line bundle.
  Let $B$ be a reduced scheme.  A projective morphism $f:X\to B$ is called
  \emph{stable}, if $X_b$ is a stable variety for each $b\in B$ and
  $\omega^{[m]}_{X/B}$ is invertible, for some $m\in \bN$ cf.~\cite{ModBook}*{3.40}.
\end{definition}

\begin{remark}
  Note that if $B$ is not assumed to be reduced or if one considers families of
  pairs, then the definition of a stable family is more complicated. The fact that in
  this case the above definition suffices follows from
  \cite{ModBook}*{4.7}. 
\end{remark}

\begin{notation}[Pullback]\label{not:pullback}
  Given morphisms of schemes $f: X\to B$ and $Z\to B$ we denote the pullback of $f$
  by $f_Z:X_Z=X\times_B Z\to Z$ and $\pr:X_Z\to X$ denotes the induced natural
  projection.
\end{notation}

\begin{remark}[Base change properties]\label{rem:BaseChange}
  Let $f:X\to B$ be a family with slc fibers. Then the relative canonical sheaf of
  $f$ is flat over $B$ with $S_2$ fibers and compatible with arbitrary base change by
  \cites{MR2629988,MR4156425}, cf.~\cite{ModBook}*{2.67}.
  
  \hmarginpar{\tiny S: I added ``with a normal general fiber''}%
  For a stable family $f:X\to B$% with a normal general fiber
  , the formation of $\omega^{[m]}_{X/B}$ commutes with arbitrary base change for
  every $m\in \bN$, by
  % \cite{MR4059993}*{Thm.~1},
  % cf.~
  \cite{ModBook}*{4.33}, cf.~\cite{MR4059993}*{Prop.~16},
  % {Proposition~2.77, Theorem~3.20, Theorem~3.37}.
  % {Definition~3.40, Theorem~4.9??, Corollary~4.10??},
  \hmarginpar{\tiny B: Is $Z$ reduced enough or do we need more assumptions on $Z$? \\
    S: reduced is enough, but I've been having a problem finding a good reference. I
    might have to fix this later (after submission). I added references that should
    be OK for submitting and we will fix this before publication.}%
  that is, for any reduced scheme $Z$ and morphism $\psi: Z\to B$, and any
  $k\in \bN$, we have that $\psi_X^* \omega^{[k]}_{X/B}\simeq \omega^{[k]}_{X_Z/Z}$,
  and that the isomorphism
  \begin{equation}
    \label{eq:1}
    \psi^* f_* \omega^{[m]}_{X/B}  \simeq  (f_Z)_* \omega^{[m]}_{X_Z/Z}
  \end{equation}
  holds when $\omega^{[m]}_{X/B}$ is strongly $f$-ample
  (cf.~\autoref{def:strongly-ample}), e.g., for all sufficiently large and divisible
  $m$.
  %, assuming that $\omega^{[m]}_{X/B}$ is invertible.

  Note that a stable family as defined in \autoref{def:stabfam} is KSB-stable in the
  sense of \cite[Def.~6.16]{ModBook}, cf.~\cite{ModBook}*{4.33}.
\end{remark}

\begin{notation}[Discriminant locus]\label{not:disc}
  Let $f:X\to B$ be a dominant morphism of regular schemes. Denote the divisorial
  part of the discriminant locus of $f$ by $\disc (f)$. Setting $D_f= \disc(f)$, we
  let $\Delta_f\leteq f^{-1}D_f$, a reduced divisor on $X$.  This way the resulting
  map \hmarginpar{\tiny B: I changed to equality to avoid mentioning this again. I
    think when we use this notation, we are always in the equality setting.}%
  $f: (X, \Delta_f) \to (B,D_f)$ is a morphism of reduced pairs.  If in addition,
  $f: (X, \Delta_f) \to (B,D_f)$ is \reduced, then $\Delta_f = f^*D_f$ and if
  $\dim X/B =n$, then there is an isomorphism (cf.~\autoref{eq:3})
  \begin{equation}\label{ssCase}
    \Omega^n_{X/B} (\log \Delta_f) \simeq \omega_{X/B} (\Delta_f - f^* D_f) \simeq
    \omega_{X/B} . 
  \end{equation}
\end{notation}

\noin We define a similar notion for morphisms of arbitrary schemes.

\begin{notation}[Non-reduced locus]\label{not:non-reduced}
  Let $f:X\to B$ be a morphism of schemes and denote the divisorial part of the locus
  of non-reduced fibers on $B$ by $R_f$, i.e., let
  \[
    R^+_f\leteq \left\{ b\in B \skvert X_b \text{ is not reduced} \right\},
  \]
  and let $R_f$ be the reduced divisor corresponding to the union of those
  irreducible components of $R^+_f$ that are codimension one in $B$.
  
  Note that if $f$ is an snc morphism of relative dimension $n$, then by
  \hmarginpar{\tiny B: Isomorphism \ref{eq:7} should be an inclusion.\\ \color{blue}
    S: Corrected. \\
    \color{red} B: Thanks!}%
  \autoref{eq:3},
  \begin{equation}
    \label{eq:7}
    \Omega^{n}_{X/B} (\log \Delta_f) \simeq 
    \omega_{X/B} (f^{-1}R_f-f^*R_f)
  \end{equation}
  Further note that if $f$ is strongly snc or stable, then $R^+_f=\emptyset$ and
  hence $R_f=0$.
\end{notation}

\begin{lemma}\label{lem:tau-injects}
  Let $f:X\to B$ be a dominant morphism of regular schemes and let $\tau:\wt X\to X$
  be a projective birational morphism and let $\wt f\leteq f\circ \tau$.  Assume that
  $\Delta_f$ and $\Delta_{\wt f}$, defined in \autoref{not:disc}, are snc divisors
  and that $\tau$ is an isomorphism outside $\Delta_f$.  Then, over the locus where 
  $f$ and $\widetilde f$ are strongly snc, there exists an
  injective morphism
  $\tau_*\Omega^n_{\wt X/B} (\log \Delta_{\wt f}) \into \Omega^n_{X/B} (\log
  \Delta_f)$.
\end{lemma}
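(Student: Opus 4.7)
The plan is to reduce the desired injection to a standard identity relating pushforwards of relative dualizing sheaves under birational morphisms. Let $W \subseteq X$ denote the open locus over which both $f$ and $\wt f$ are strongly snc (with $\wt f$ being strongly snc over $W$ meaning that $\wt f|_{\tau^{-1}(W)}$ is strongly snc). Since \autoref{ssCase} applies to any strongly snc morphism, restricting to $W$ and $\tau^{-1}(W)$ respectively gives canonical identifications
\[
\Omega^n_{X/B}(\log \Delta_f)|_W \simeq \omega_{X/B}|_W \qtq{and} \Omega^n_{\wt X/B}(\log \Delta_{\wt f})|_{\tau^{-1}(W)} \simeq \omega_{\wt X/B}|_{\tau^{-1}(W)}.
\]
Under these isomorphisms, the desired injection becomes an injection $\tau_* \omega_{\wt X/B} \hookrightarrow \omega_{X/B}$ on $W$, so the task reduces to constructing such a map of relative dualizing sheaves.

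Next, over $W$ both $f$ and $\wt f$ are flat morphisms of regular schemes (flatness is part of the snc condition), so the relative dualizing sheaves are line bundles with $\omega_{X/B} \simeq \omega_X \otimes f^* \omega_B^{-1}$ and $\omega_{\wt X/B} \simeq \omega_{\wt X} \otimes \tau^* f^* \omega_B^{-1}$. Applying the projection formula to $\tau$ yields
\[
\tau_* \omega_{\wt X/B} \simeq (\tau_* \omega_{\wt X}) \otimes f^* \omega_B^{-1},
\]
so producing the needed injection reduces to producing an injection $\tau_* \omega_{\wt X} \hookrightarrow \omega_X$ on $W$ (or even globally on $X$).

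Finally, I invoke the classical fact that for a projective birational morphism $\tau: \wt X \to X$ between regular (hence smooth) schemes over $\bC$, one has an isomorphism $\tau_* \omega_{\wt X} \simeq \omega_X$. Concretely, writing $K_{\wt X} = \tau^* K_X + E$ with $E \geq 0$ a $\tau$-exceptional divisor (possible because $X$ is smooth) gives $\omega_{\wt X} \simeq \tau^* \omega_X \otimes \sO_{\wt X}(E)$, and since $E$ is $\tau$-exceptional and $X$ is normal, one has $\tau_* \sO_{\wt X}(E) = \sO_X$; combined with the projection formula this yields $\tau_* \omega_{\wt X} \simeq \omega_X$. Chaining this with the previous two steps gives the required injection (in fact, an isomorphism) on the strongly snc locus. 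The main subtle point is verifying that the identifications of \autoref{ssCase} are natural enough for the pushforward under $\tau$ to line up with the Grothendieck trace map rather than producing some stray twist; this is expected to be formal once the three isomorphisms are in place, and no serious obstacle is anticipated.
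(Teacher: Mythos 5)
Your proof is correct and uses essentially the same mechanism as the paper's: writing $\omega_{\wt X}\simeq \tau^*\omega_X(E)$ for an effective $\tau$-exceptional $E$ (possible since $X$ is regular), pushing forward with the projection formula, and using $\tau_*\sO_{\wt X}(E)\simeq\sO_X$ (the paper cites \cite{KMM87}*{1-3-2} for this last fact). The only organizational difference is that you restrict to the strongly snc locus at the outset so that the $\Delta-f^*D$ twists in \autoref{eq:3} vanish and the sheaves reduce to $\omega_{\cdot/B}$, whereas the paper keeps those twists, proves the containment $\omega_{\wt X/B}(\Delta_{\wt f}-\wt f^*D_{\wt f})\subseteq\tau^*\omega_{X/B}(\Delta_f-f^*D_f)\otimes\sO_{\wt X}(E)$ in full, and only invokes \autoref{eq:3} at the end; your closing worry about Grothendieck trace is unnecessary since the lemma only asserts existence of an injection and your chain of explicit isomorphisms already supplies one.
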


\begin{proof}
  First, observe that $D_f\subseteq D_{\wt f}$ and hence
  ${\tau}^{-1}\Delta_f\subseteq \Delta_{\wt f}$.  Further we note that 
  by construction we have 
  \[
    \Delta_{\wt f} \leq \tau^*\Delta_f .
  \]
  On the other hand, as $X$ is non-singular, there exists another $\tau$-exceptional
  effective divisor $E_1$ such that $\omega_{\wt X}\simeq \tau^*\omega_X(E_1)$.
  Putting everything together we obtain that
  \[
    \omega_{\wt X/B}(\Delta_{\wt f}-\wt f^*D_{\wt f}) \subseteq
    \tau^*\left(\omega_{X/B}(\Delta_f-f^*D_f)\right)\otimes \sO_{\wt X}(E_1),
  \]
  and hence
  \[
    \tau_*\omega_{\wt X/B}(\Delta_{\wt f}-\wt f^*D_{\wt f}) \subseteq
    \omega_{X/B}(\Delta_f-f^*D_f)\otimes \tau_*\sO_{\wt X}(E_1).
  \]
  By \cite{KMM87}*{1-3-2} $\tau_*\sO_{\wt X}(E_1)\simeq \sO_X$ and hence the
  above containment combined with \autoref{eq:3} implies the desired statement.
\end{proof}

\begin{notation}\label{not:fiberprod}
  For a morphism of normal schemes $f:X\to B$, for any $r\in \bN$, we denote the
  $r$-fold fiber product by
  \[
  X^r := \underbrace{X \times_B \ldots \times_B X}_{\text{$r$ times}} ,
  \]
  with the induced morphism $f^r: X^r \to B$.  Furthermore, let $\pi:X^{(r)}\to X^r$
  denote a strong resolution
  of $X^r$, with the naturally induced map $f^{(r)}: X^{(r)}\to B$. %, and define
  %\footnote
  ({Recall that a strong resolution $\pi: Y\to X$ is a resolution % desingularization
    of $X$ for which $\pi|_{\pi^{-1}(X_{\reg})}$ is an isomorphism, where $X_{\reg}$
    denotes the regular locus of $X$.})%
  \hmarginpar{\tiny S: I moved the footnote here to a parenthetical sentence}
  \hmarginpar{\tiny S: I think we should use either ``resolution'' or
    ``desingularization'', and definitely not using one to define the other... \\
    \color{red} B: Sounds good. I have left the global change to you. :-) }%
  \hmarginpar{\tiny S: What's a strong resolution here? \\ \textbf{B: There were
      multiple typos here. Is it OK now?} \\ S: ``strong resolution'' is still not
    defined... }%
  \hmarginpar{\tiny S: I would like to revisit this notation, but we can do this
    after it is posted.}%
  % \[
  % \big(f^{ \{ r \} }: X^{ \{ r\} } \to B \big) := %\left\{
  %   \begin{cases}
  %      f^r: X^r \to B  & \text{if $f$ is stable } \\
  %     f^{(r)} : X^{(r)} \to B & \text{ if $f$ is not stable, but $X$ is regular. }
  %   \end{cases}
  % %\right.
  % \]
\end{notation}

\begin{proposition}\label{prop:rfold}
  Given a stable family $f:X\to B$, $f^r$ is also stable. Furthermore, for every $m$
  for which $\omega^{[m]}_{X/B}$ is invertible, $\omega^{[m]}_{X^r/B}$ is also
  invertible and, over the complement of a subscheme of $B$ of $\codim_B\geq 2$, we
  have
 \begin{equation}\label{rfold}
   f^r_* \omega^{[m]}_{X^r/B}  \simeq \bigotimes^r f_*\omega^{[m]}_{X/B} .
 \end{equation}
\end{proposition}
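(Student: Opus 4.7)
\emph{Plan.} My plan is to organize the argument around three logical steps: verifying stability of $f^r$, establishing invertibility of $\omega^{[m]}_{X^r/B}$, and deriving the Künneth-type decomposition of the pushforward outside a codimension-two subscheme of $B$.

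First, I would prove stability of $f^r$ by induction on $r$, reducing to $r=2$ since $X^{r+1} \simeq X^r \times_B X$. For $r=2$, the fiber of $f^2$ over $b\in B$ is the self-product $X_b \times X_b$. The standard fact that products of slc varieties over $\Spec \bC$ remain slc handles the singularity condition, while the reflexive tensor decomposition of the canonical sheaf, combined with ampleness of each factor's canonical, gives ampleness of $\omega_{X_b \times X_b}$. Taken together with Step~2 below, this yields all requirements of \autoref{def:stabfam}.

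Second, denoting the projections $p_1, \ldots, p_r \colon X^r \to X$, I would construct the natural isomorphism $\bigotimes_{i=1}^r p_i^* \omega^{[m]}_{X/B} \simeq \omega^{[m]}_{X^r/B}$ on the relative Gorenstein locus of $f^r$ via a local computation. Since slc varieties are $S_2$ and $G_1$, this locus has complement of codimension at least two in $X^r$. The left-hand side is a line bundle, and both sheaves are reflexive, so the isomorphism propagates globally, proving that $\omega^{[m]}_{X^r/B}$ is invertible.

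Third, the Künneth-type formula would be established by induction on $r$, with $r=2$ as the core case. In the Cartesian square formed by the two projections $p_1, p_2 \colon X \times_B X \to X$, flat base change together with \autoref{eq:1} identifies $p_{1*} p_2^* \omega^{[m]}_{X/B}$ with $f^* f_* \omega^{[m]}_{X/B}$ over the open locus where $f_* \omega^{[m]}_{X/B}$ is locally free. Two applications of the projection formula then yield
\[
  f^2_* \omega^{[m]}_{X^2/B} \simeq f_*\bigl(\omega^{[m]}_{X/B} \otimes f^* f_* \omega^{[m]}_{X/B}\bigr) \simeq f_* \omega^{[m]}_{X/B} \otimes f_* \omega^{[m]}_{X/B},
\]
and iteration handles $r \geq 3$.

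The main obstacle I anticipate is pinning down the open subset of $B$ over which all the identifications are simultaneously valid. Since $f_* \omega^{[m]}_{X/B}$ is reflexive, it is locally free only outside a codimension-two subscheme; matching this locus with the one on which the base change \autoref{eq:1} applies is the technical delicacy that produces the codim $\geq 2$ qualifier in the conclusion.
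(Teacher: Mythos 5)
Your argument is correct in substance, and its computational core---induction on $r$, flat base change along the flat morphism $f$, and two applications of the projection formula, with the codimension-two removal accounting for the locus where $f_*\omega^{[m]}_{X/B}$ is locally free---is the same as in the paper's proof. The differences lie in how the inputs are obtained. For stability the paper simply cites an iterated application of \cite{MR3143705}*{Prop.~2.12}; the two facts you treat as standard, namely that products of slc varieties are again slc and that $\omega^{[m]}_{X^r/B}$ decomposes as the reflexive exterior tensor product $\bigotimes_i p_i^*\omega^{[m]}_{X/B}$, are precisely the content of that result (together with the base change theorem for stable families), so they should be cited rather than re-derived; your sketch of them is plausible, but that is where essentially all of the real work sits. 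For the K\"unneth step the paper sidesteps your Gorenstein-locus-plus-reflexive-extension argument: it writes $\omega^{[m]}_{X^r/B}\simeq\omega^{[m]}_{X^r/X}\otimes\pr_r^*\omega^{[m]}_{X/B}$ and invokes \autoref{rem:BaseChange} (i.e.\ \cite{ModBook}*{4.33}) to get $\omega^{[m]}_{X^r/X}\simeq\pr^*\omega^{[m]}_{X^{r-1}/B}$, which yields invertibility for free and avoids any extension argument on $X^r$. Your extension across a codimension-two subset tacitly assumes that reflexive sheaves on $X^r$ are determined by their restriction there, which requires $X^r$ to be $S_2$; this is automatic when $B$ is regular (the setting of every application in the paper) but is not literally granted by the hypotheses, which only require $B$ reduced, so it deserves a remark. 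Finally, your appeal to \autoref{eq:1} is both unnecessary and slightly misplaced: that isomorphism requires $\omega^{[m]}_{X/B}$ to be strongly $f$-ample, a hypothesis not present in the statement, whereas the identification $p_{1*}p_2^*\omega^{[m]}_{X/B}\simeq f^*f_*\omega^{[m]}_{X/B}$ is plain flat base change (using only that $f$ is flat) and needs no local freeness; local freeness of $f_*\omega^{[m]}_{X/B}$ enters only in the second projection formula, which is exactly what the codimension-two removal is there to provide.
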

 
\begin{proof}
  An iterated application of \cite{MR3143705}*{Prop.~2.12} shows that $f^r$ is
  stable. 
  For the rest of the claim we use induction on $r$.  Observe that
  $X^r=X\times_BX^{r-1}$ and let $\pr:X^r\to X^{r-1}$ %, $\pr_{r-1}: X^{r-1}\to X$
  and $\pr_r: X^r\to X$ denote the natural projections.
  \[
    \xymatrix{%
     \ \ X^r \ar[r]^{\pr} \ar[d]_{\pr_r}  & X^{r-1} \ar[d]^{f^{r-1}} \hskip-1.65em\\
      X \ar[r]_f & B
    }
  \]
  Then
  % $\pr_r=\pr_{r-1}\circ\pr$ and
  $\omega^{[m]}_{X^r/X} \simeq \pr^*\omega^{[m]}_{X^{r-1}/B}$ by
  \autoref{rem:BaseChange} and hence
  $(\pr_r)_* \omega^{[m]}_{X^r/X}\simeq f^* f^{r-1}_* \omega_{X^{r-1}/B}^{[m]}$ by
  flat base change.  Applying $f^r_*=f_*(\pr_r)_*$ to
  \begin{equation*}%\label{invertibles}
    \omega^{[m]}_{X^r/B}  \simeq  \omega^{[m]}_{X^r/X} \otimes \pr_r^*
    \omega^{[m]}_{X/B},
    % \simeq \pr^*\omega^{[m]}_{X^{r-1}/B} \otimes \pr_r^*\omega^{[m]}_{X/B}
    % \simeq \pr^*\left(\omega^{[m]}_{X^{r-1}/B} \otimes
    %   \pr_{r-1}^*\omega^{[m]}_{X/B}\right).
  \end{equation*}  
  and using the induction hypothesis then yields
  \begin{equation*}
    f_*(\pr_r)_* \omega^{[m]}_{X^r/B}
    % \simeq f_*\left((\pr_r)_* \omega^{[m]}_{X^r/X} \otimes\omega^{[m]}_{X/B}\right)
    \simeq f_*\left( f^* f^{r-1}_* \omega_{X^{r-1}/B}^{[m]}
      \otimes\omega^{[m]}_{X/B}\right) \simeq \left(\bigotimes^{r-1}
      f_*\omega^{[m]}_{X/B}\right)\otimes f_*\omega^{[m]}_{X/B}. \qedhere
  \end{equation*}
  % Therefore, \autoref{rfold} follows from the induction hypothesis after applying
  % $f_*$ to this latter isomorphism.
\end{proof}

\begin{corollary}\label{prop:inj-for-stable}
  Let $f:X\to B$ be a family which is stable in codimension one.  Assume that $B$ is
  quasi-projective and that $\omega^{[m]}_{X/B}$ is invertible
  % and strongly $f$-ample
  in codimension one.  After removing a subscheme of $B$ of $\codim_B\geq 2$ if
  necessary, there exists an injection
  \begin{equation}\label{KeyInject-stable}
    \left(\det f_*\omega_{X/B}^{[m]}\right)^{\otimes k}
    \hooklongrightarrow   
    f_*^{  k\bdot r_m } \omega^{[m]}_{X^{ k\bdot r_m  }/ B}, % m^a\bdot e\bdot r_m
  \end{equation}
  for any $k \in \bN$ where $r_m: = \rank(f_* \omega^{[m]}_{X/B})$.
\end{corollary}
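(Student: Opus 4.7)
The plan is to exploit the standard antisymmetrization embedding of the determinant of a locally free sheaf into its tensor power, and then apply \autoref{prop:rfold} to reinterpret the tensor power as pushforward from a fibre product.

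First, I would pass to a suitable open subset $B^\circ \subseteq B$ whose complement has codimension $\geq 2$. Since $f$ is stable in codimension one and $\omega^{[m]}_{X/B}$ is invertible in codimension one, I can shrink $B$ so that $f^\circ\colon X^\circ \to B^\circ$ is a stable family in the sense of \autoref{def:stabfam} with $\omega^{[m]}_{X^\circ/B^\circ}$ invertible. Up to further (and possibly larger $m$) divisibility, I may also assume that $\omega^{[m]}_{X^\circ/B^\circ}$ is strongly $f^\circ$-ample (\autoref{def:strongly-ample}). Then $f^\circ_\ast\omega^{[m]}_{X^\circ/B^\circ}$ is locally free of rank $r_m$, and the base change isomorphism \eqref{eq:1} is available. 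Writing $\sE \leteq f^\circ_\ast\omega^{[m]}_{X^\circ/B^\circ}$, we have $\det\sE$ is a genuine line bundle on $B^\circ$.

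Next, on $B^\circ$ I would use the usual antisymmetrization injection of a locally free sheaf of rank $r$ into its $r$-th tensor power, namely
\[
  \det\sE = \bigwedge\nolimits^{r_m}\!\sE \hooklongrightarrow \sE^{\otimes r_m},
\qquad
  e_1\wedge\cdots\wedge e_{r_m} \longmapsto \sum_{\sigma\in S_{r_m}}\sgn(\sigma)\,e_{\sigma(1)}\otimes\cdots\otimes e_{\sigma(r_m)}.
\]
Taking $k$-th tensor powers (and using that tensor product with a line bundle is exact) produces an injection
\[
  (\det\sE)^{\otimes k} \hooklongrightarrow \sE^{\otimes k r_m}.
\]

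Finally, I would apply \autoref{prop:rfold} to the stable family $f^\circ$ with $r = k r_m$: after possibly removing a further codimension $\geq 2$ subset from $B^\circ$, the right hand side is identified as
\[
  \sE^{\otimes k r_m} \;\simeq\; (f^\circ)^{k r_m}_{\ast}\, \omega^{[m]}_{(X^\circ)^{k r_m}/B^\circ},
\]
yielding the claimed injection \eqref{KeyInject-stable}. I do not expect a serious obstacle: the only subtlety is to ensure that shrinking $B$ by a codimension-two set simultaneously makes $\sE$ locally free, makes $\omega^{[m]}_{X/B}$ invertible and strongly $f$-ample, and places us in the hypotheses of \autoref{prop:rfold}; all three conditions hold in codimension one by hypothesis and by \autoref{rem:BaseChange}.
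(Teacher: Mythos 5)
Your argument is exactly the paper's proof: shrink $B$ in codimension two so that $f$ is stable, $\omega^{[m]}_{X/B}$ is invertible and $f_*\omega^{[m]}_{X/B}$ is locally free, embed $\det f_*\omega^{[m]}_{X/B}$ into $\bigotimes^{r_m}f_*\omega^{[m]}_{X/B}$, raise to the $k$-th power, and identify $\bigotimes^{k\bdot r_m}f_*\omega^{[m]}_{X/B}$ with $f^{k\bdot r_m}_*\omega^{[m]}_{X^{k\bdot r_m}/B}$ via \autoref{prop:rfold}. The only caveat is your aside about passing to a ``possibly larger $m$'' to get strong $f$-ampleness: $m$ is fixed in the statement, so you may not enlarge it, but this step is also unnecessary, since local freeness of $f_*\omega^{[m]}_{X/B}$ in codimension one (after the codimension-two removal) is all you need and no base change as in \autoref{eq:1} is required beyond what \autoref{prop:rfold} already provides.
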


\begin{proof}
  By removing a subscheme of $B$ of $\codim_B\geq 2$ if necessary we may assume that
  $f$ is stable, $\omega^{[m]}_{X/B}$ is invertible and
  % strongly $f$-ample. These conditions imply
  that $f_* \omega^{[m]}_{X/B}$ is locally free on $B$.
  Raising the natural embedding 
  \[
    \det f_* \omega^{[m]}_{X/B} \hooklongrightarrow \bigotimes^{r_m} f_*
    \omega^{[m]}_{X/B}
  \]
  to the $k^{\text{th}}$ power yields
  \[
    \big( \det f_* \omega^{[m]}_{X/B}\big)^{\otimes k} \hooklongrightarrow
    \bigotimes^{ k \bdot r_m} f_* \omega^{[m]}_{X/B} \simeq f_*^{ k\bdot r_m }
    \omega^{[m]}_{X^{ k\bdot r_m }/ B},
  \]
  where the last isomorphism is simply \autoref{rfold}.
\end{proof}

\subsection{Determinants of direct image sheaves and base change}
\label{ss:imagesheaves}

\begin{def-not}\label{not:L}
  Let $f:X \to B$ be a morphism of finite type of normal schemes.  Assume that $B$ is
  regular, and fix an $m\in\bN$.  We define the sheaf $\Lm(f)$ as follows:
  \[
    \Lm(f): = \det \big( f_*\omega^{[m]}_{X/B} (-m\bdot R_f) \big) ,
  \]
  where $R_f$ is as in \autoref{not:non-reduced}.
\end{def-not}

The following is a trivial observation, but we record it so we can easily cite it
when needed.

\begin{lemma}\label{lem:Lm-contains}
  Let $f:X \to B$ be a morphism of finite type of normal schemes.  Assume that $B$ is
  regular, and fix an $m\in\bN$.
  % Let $f:(X,\Delta) \to (B,D)$ be an snc morphism of relative dimension $n$.  Fix
  % an $m\in\bN$.
  Then $\Lm(f)$ is a line bundle and
  \begin{equation}
    \label{eq:4}
    \Lm(f) \supseteq \det \big( f_*\omega^{[m]}_{X/B} (-m\bdot D_f) \big).
  \end{equation}
  Furthermore, if $f$ has reduced fibers (e.g., it is \reduced or stable), then
  \begin{equation}
    \label{eq:5}
    \Lm(f)=    \det  f_*\omega^{[m]}_{X/B}.
  \end{equation}
\end{lemma}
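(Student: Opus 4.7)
The plan is to unwind the definitions and compare them, with the regularity of $B$ doing the heavy lifting so that all relevant sheaves behave as rank-one reflexive objects, i.e., line bundles.

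First I would observe that $\Lm(f)$ is automatically a line bundle: by \autoref{not:det}, for any coherent sheaf $\sF$ on the regular scheme $B$, $\det\sF$ is the double dual of a top exterior power, hence a reflexive rank-one sheaf, and on a regular scheme every such sheaf is invertible. Applying this to $\sF = f_*\omega^{[m]}_{X/B}(-m\bdot R_f)$ gives the first claim.

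For the containment in \eqref{eq:4}, the key geometric input is that a non-reduced fiber is never smooth, so $R_f \leq D_f$ as reduced divisors on $B$ (both are divisorial loci inside $B$ and the non-reduced locus is contained in the non-smooth locus). This gives an inclusion of ideal sheaves $\sO_B(-m\bdot D_f) \hookrightarrow \sO_B(-m\bdot R_f)$; tensoring with $f_*\omega^{[m]}_{X/B}$ (which is torsion-free, and the ideal sheaves are invertible, so tensoring is exact) yields an injection
\[
f_*\omega^{[m]}_{X/B}(-m\bdot D_f) \hooklongrightarrow f_*\omega^{[m]}_{X/B}(-m\bdot R_f).
\]
Taking top exterior powers and double-dualising produces a morphism of line bundles on $B$ which is an isomorphism at the generic point (since the sheaves have the same generic rank and the inclusion is generically an isomorphism where the twist is trivial); any nonzero morphism between line bundles on the integral regular scheme $B$ is injective, which gives \eqref{eq:4}.

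For \eqref{eq:5}, the note at the end of \autoref{not:non-reduced} records that an snc morphism with reduced fibers has $R_f = 0$, and this applies in particular to the \reduced and stable cases. Substituting $R_f = 0$ into the definition of $\Lm(f)$ gives the asserted equality. The only place where care is needed is the injectivity step on determinants, and this is entirely taken care of by the regularity of $B$; there is no real obstacle beyond bookkeeping.
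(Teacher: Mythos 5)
The paper explicitly labels this lemma as ``a trivial observation'' and gives no proof, so there is nothing to compare against; your argument is correct and is the obvious one. The key observations are exactly as you say: a non-reduced fiber cannot be smooth, so $R_f\leq D_f$ (given that the discriminant has positive codimension, which is the intended setting), and regularity of $B$ makes all the rank-one reflexive determinants invertible. A slightly cleaner route for~\eqref{eq:4} is to use the formula $\det(\sF\otimes\sL)\simeq(\det\sF)\otimes\sL^{\otimes\rank\sF}$ for $\sL$ invertible, which turns the containment directly into $\sO_B(-m\,r_m\,D_f)\subseteq\sO_B(-m\,r_m\,R_f)$, i.e.\ into $R_f\leq D_f$, avoiding the intermediate step of tensoring the injection of ideal sheaves with the (possibly non-flat) torsion-free sheaf $f_*\omega^{[m]}_{X/B}$ and then passing to determinants; but your version is also correct since tensoring by invertible sheaves preserves injectivity on torsion-free sheaves.
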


\begin{lemma}\label{lem:W-for-snc}
  Let $f:(X,\Delta) \to (B,D)$ be an snc morphism of relative dimension $n$.  Fix an
  $m\in\bN$. Then (using \autoref{not:disc}), 
  \[
    \Lm(f)\subseteq \det f_* \left(\Omega^{n}_{X/B} (\log\Delta_f)^{\otimes m}
    \right).
  \]
  % Let $f:X \to B$ be a morphism of normal schemes. 
  % Assume that $B$ is regular and fix $m\in\bN$.  We define
  % $\Lm(f)$ by
  % \[
  %   \Lm(f): = \det \big(  f_*\omega^{[m]}_{X/B}  (-m^+\bdot D_f) \big) ,
  % \]
  % where $D_f=\disc(f)$ and $m^+$ is set to be zero, when $f$ is semistable or stable
  % in codimension one, and is equal to $m$, if $X$ is regular and $f$ is not
  % semistable.
\end{lemma}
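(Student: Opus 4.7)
The plan is to trace through the identifications already set up in the preliminaries and then to compare determinants of torsion-free pushforwards of the same generic rank, which lets one move from sheaves to line bundles on the regular base $B$.

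First, I would apply the isomorphism \eqref{eq:7} raised to the $m$-th tensor power to identify
\[
\Omega^{n}_{X/B}(\log \Delta_f)^{\otimes m} \simeq \omega_{X/B}^{m}\bigl(m\,f^{-1}R_f - m\,f^{*}R_f\bigr) \simeq \omega_{X/B}^{m}(m\,f^{-1}R_f)\otimes f^{*}\sO_B(-m R_f).
\]
Applying $f_{*}$ and using the projection formula (legitimate since the second factor is the pullback of a line bundle) yields
\[
f_{*}\bigl(\Omega^{n}_{X/B}(\log \Delta_f)^{\otimes m}\bigr) \simeq f_{*}\bigl(\omega_{X/B}^{m}(m\,f^{-1}R_f)\bigr)\otimes \sO_B(-m R_f).
\]

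Next, because $m f^{-1}R_f$ is an effective divisor on $X$, there is a tautological inclusion $\omega_{X/B}^m \hookrightarrow \omega_{X/B}^m(m f^{-1}R_f)$. Pushing forward, I obtain an inclusion
\[
f_{*}\omega_{X/B}^{m} \hooklongrightarrow f_{*}\bigl(\omega_{X/B}^{m}(m\,f^{-1}R_f)\bigr)
\]
of torsion-free sheaves on $B$. The key observation is that this inclusion is an isomorphism over the complement of $R_f$: indeed $R_f\subseteq D_f$, so for $b\in B\setminus R_f$ the divisor $f^{-1}R_f$ is disjoint from the fiber $X_b$. Hence both sheaves have the same generic rank $r=r_m$.

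Since $B$ is regular, the double-dual-of-top-wedge determinant produces line bundles, and an inclusion of torsion-free sheaves of the same generic rank induces an inclusion of their determinants. Applying this and then twisting by $\sO_B(-m r\,R_f)$ (using $\det(\sE\otimes \sO_B(-m R_f)) = \det\sE\otimes \sO_B(-m r R_f)$ for a rank-$r$ sheaf $\sE$) transforms the left-hand side into $\Lm(f)=\det f_{*}\omega_{X/B}^{m}\otimes \sO_B(-m r\,R_f)$ and the right-hand side into $\det f_{*}\bigl(\Omega^{n}_{X/B}(\log \Delta_f)^{\otimes m}\bigr)$, yielding the claim. The only mildly technical point is ensuring that the determinant construction commutes with the relevant line-bundle twists and respects inclusions of torsion-free sheaves of matching generic rank, which is standard once $B$ is regular; I would handle it by working over the big open where everything is locally free and invoking reflexivity to extend.
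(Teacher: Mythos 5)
Your argument is correct and fills in exactly the details that the paper's one-line proof ("This follows directly from \autoref{eq:7}") leaves implicit: tensor the isomorphism of \autoref{eq:7} to the $m$-th power, use the projection formula to isolate the $\sO_B(-mR_f)$ twist, observe that the effective divisor $m f^{-1}R_f$ yields an inclusion of pushforwards that is an isomorphism over $B\setminus R_f$, and then pass to determinants of torsion-free sheaves of equal generic rank over the regular base $B$. This is the same route the paper intends; the only micro-optimization one could make is to apply the twist inclusion $\omega^m_{X/B}(-m f^*R_f)\hookrightarrow \omega^m_{X/B}(m f^{-1}R_f - m f^*R_f)$ directly before pushing forward, which avoids separately bookkeeping the $\sO_B(-m r R_f)$ factor, but the content is identical.
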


\begin{proof}
  This follows directly from \autoref{eq:7}.
\end{proof}

\hmarginpar{\tiny B: Have used snc morphism to clean up this lemma.}%
\begin{lemma}\label{lem:injection}
  Let $(X, \Delta)$ and $(B,D)$ be two reduced pairs.  Assume that $(B,D)$ is snc.
  Let $f: (X, \Delta) \to (B, D)$ be a morphism of reduced pairs, with
  $\dim X/B =n \neq 0$. Further let $B'$ be a regular variety and $\gamma: B' \to B$
  a flat surjective morphism.  Let $\pi:Y\to X\times_BB'$ be a resolution of
  singularities, $D'=(\gamma^*D)_{\red}$, and $\Sigma$ a reduced divisor on $Y$ such
  that $g=f'\circ\pi: (Y, \Sigma) \to (B', D')$ is an snc morphism.  These objects
  and morphisms fit in the following commutative diagram of morphisms of pairs:
  \begin{equation*}%\label{SSDiagram}
    \xymatrix{
      (Y, \Sigma) \ar@/^7mm/[rrrr]^{\mu} \ar[rr]^-{\pi}  \ar[drr]_{g} &&
      (X _ {B'}, \Delta_{B'})
      \ar[rr]_-{\gamma'} \ar[d]^{f'} &&  (X, \Delta) \ar[d]^{f} \\   
      &&        (B', D')  \ar[rr]^{\gamma}    && (B,D) .
    }
  \end{equation*}

  \begin{enumerate}
  \item\label{ONE} If $f: (X, \Delta) \to (B, D)$ is an snc morphism, then there is a
    natural injective morphism %injection
    \[
      \gamma^* f_* \big( \Omega^n_{X/B} (\log \Delta)^{\otimes m} \big)
      \hooklongrightarrow g_* \big( \Omega^n_{Y/B'} (\log \Sigma) ^{\otimes m} \big)
    \]
    which is generically an isomorphism over $B'$. 
  \item\label{TWO} If $g$ is \reduced, then for every projective birational morphism
    $\eta: \wtilde X\to X$ which is an isomorphism outside $\Delta_f$ and such that
    (using \autoref{not:disc})
    $\wtilde f:= f \circ \eta : (\wtilde X, \Delta_{\wtilde f}) \to (B, D_{\wtilde
      f})$ is an snc morphism, there exists a natural injection
    \[
    \gamma^* \Lm(\wtilde f) \hooklongrightarrow \det g_* \omega^m_{Y/B'} ,
    \]
    which is generically an isomorphism over $B'$.
  \end{enumerate}
\end{lemma}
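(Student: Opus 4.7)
My plan is to construct the map in part (i) by combining flat base change with the natural pullback of log differentials along $\mu = \gamma' \circ \pi$, and then to deduce part (ii) by applying part (i) to $\wtilde f$ and taking determinants. I will use throughout that $B'$ is regular (so every coherent sheaf on $B'$ has a well-defined determinant line bundle) and that flat pullback commutes with $\det$ and with the formation of exterior powers.

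For part (i): Since $\gamma$ is flat, flat base change gives
\[
  \gamma^{*} f_{*}\big(\Omega^{n}_{X/B}(\log\Delta)^{\otimes m}\big) \simeq f'_{*}\gamma'^{*}\big(\Omega^{n}_{X/B}(\log\Delta)^{\otimes m}\big).
\]
Functoriality of logarithmic differentials along the morphism of snc pairs $\mu:(Y,\Sigma)\to(X,\Delta)$ provides a natural pullback map $\mu^{*}\Omega^{n}_{X/B}(\log\Delta)\to \Omega^{n}_{Y/B'}(\log\Sigma)$, which is an isomorphism over the open locus where $\gamma$ is \'etale and $\pi$ is an isomorphism (in particular over the complement of $D'$ in $B'$). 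Taking the $m$-th tensor power, applying $\pi_{*}$ and adjunction, then applying $f'_{*}$ and using $g_{*}=f'_{*}\pi_{*}$, yields the desired morphism. Injectivity follows because the source is torsion-free on the regular integral scheme $B'$ (as the flat pullback of the direct image of a locally free sheaf under a proper morphism whose target is regular) and the map is a generic isomorphism by construction.

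For part (ii): I apply part (i) to the snc morphism $\wtilde f:(\wtilde X,\Delta_{\wtilde f})\to (B,D_{\wtilde f})$ to get an injection
\[
  \gamma^{*}\wtilde f_{*}\big(\Omega^{n}_{\wtilde X/B}(\log\Delta_{\wtilde f})^{\otimes m}\big)\hooklongrightarrow g_{*}\big(\Omega^{n}_{Y/B'}(\log\Sigma)^{\otimes m}\big)
\]
that is a generic isomorphism. Because $\det$ of a generically isomorphic morphism of coherent sheaves on a regular scheme produces an injection of line bundles, and $\gamma^{*}$ commutes with $\det$, this yields an injection
\[
  \gamma^{*}\det\wtilde f_{*}\big(\Omega^{n}_{\wtilde X/B}(\log\Delta_{\wtilde f})^{\otimes m}\big) \hooklongrightarrow \det g_{*}\big(\Omega^{n}_{Y/B'}(\log\Sigma)^{\otimes m}\big).
\]
By \autoref{lem:W-for-snc}, the left-hand side contains $\gamma^{*}\Lm(\wtilde f)$ (the inclusion is preserved by the flat pullback $\gamma^{*}$). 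On the right, the \reduced hypothesis on $g$ together with formula \eqref{ssCase} gives $\Omega^{n}_{Y/B'}(\log\Sigma)\simeq\omega_{Y/B'}$, so the right-hand side becomes $\det g_{*}\omega^{m}_{Y/B'}$. Composing these identifications produces the asserted injection, and it is a generic isomorphism because each step in the construction is.

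The main obstacle I anticipate is verifying cleanly that the pullback map on relative log differentials is a generic isomorphism while being compatible with the specific divisors $\Delta_{\wt f}$ and $\Sigma$ produced by the birational model $\eta$ and the resolution $\pi$; in particular, one must ensure that the horizontal part of $\Sigma$ is compatible with the invocation of \eqref{ssCase}, so that the right-hand side really simplifies to $\det g_{*}\omega^{m}_{Y/B'}$ rather than a twist of it. A secondary, but purely formal, issue is confirming that $\det$ commutes with the operations in play at the level of coherent sheaves on the regular base $B'$, which follows from the standard determinant construction for coherent sheaves on a regular scheme via finite locally free resolutions.
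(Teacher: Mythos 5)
Your argument for \autoref{ONE} is essentially the paper's: flat base change, the natural pullback of relative log differentials along $\mu$, pushing forward, and injectivity from torsion-freeness plus the generic isomorphism. That part is fine.

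The proof of \autoref{TWO}, however, has a genuine gap at its very first step. You "apply part (i) to $\wtilde f$" and write the target of the resulting injection as $g_*\big(\Omega^n_{Y/B'}(\log\Sigma)^{\otimes m}\big)$, i.e.\ as a pushforward from the \emph{given} resolution $Y$ of $X\times_B B'$. But \autoref{ONE} applied to $\wtilde f$ produces a map into $\wtilde g_*\big(\Omega^n_{\wtilde Y/B'}(\log\Delta_{\wtilde g})^{\otimes m}\big)$ for a resolution $\wtilde Y$ of $\wtilde X\times_B B'$ — not into sheaves on $Y$. Since $\eta:\wtilde X\to X$ is a nontrivial modification over $\Delta_f$, there is in general no morphism $Y\to\wtilde X_{B'}$, only a rational map, so there is no pullback map of log differentials from $\wtilde X$ (or $\wtilde X_{B'}$) to $Y$ and the injection you assert does not come from \autoref{ONE}. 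The missing ingredient is precisely how the paper bridges the two birational models: resolve the indeterminacy of $Y\dashrightarrow \wtilde X_{B'}$ by some $\tau:\wtilde Y\to Y$, apply \autoref{ONE} to $\wtilde f$ with the resolution $\wtilde Y$, and then descend from $\wtilde Y$ to $Y$ via \autoref{lem:tau-injects}, which gives $\tau_*\Omega^n_{\wtilde Y/B'}(\log\Delta_{\wtilde g})\into\Omega^n_{Y/B'}(\log\Delta_g)$ using the exceptional-divisor bookkeeping ($\Delta_{\wtilde g}\leq\tau^*\Delta_g$, $\omega_{\wtilde Y}\simeq\tau^*\omega_Y(E)$ with $\tau_*\sO_{\wtilde Y}(E)\simeq\sO_Y$) over the locus where $g$ and $\wtilde g$ are \reduced. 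Without this pushforward comparison the chain of maps never reaches $g_*\omega^m_{Y/B'}$.

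Relatedly, the issue you flag only as a "main obstacle" — the horizontal part of $\Sigma$ — is not a cosmetic worry: \autoref{ssCase} yields $\omega_{Y/B'}$ only for the divisor $\Delta_g=g^{-1}D_g$ (no horizontal components), whereas $\Omega^n_{Y/B'}(\log\Sigma)\simeq\omega_{Y/B'}(\Sigma_h)$ when $\Sigma$ has a horizontal part. The paper sidesteps this by phrasing the comparison on $Y$ in terms of $\Delta_g$ (this is exactly what \autoref{lem:tau-injects} delivers), so the target really is $\det g_*\omega^m_{Y/B'}$; in your formulation with $\Sigma$ it would a priori be a twist. The remaining steps of your argument (using \autoref{lem:W-for-snc} to put $\gamma^*\Lm(\wtilde f)$ inside the pulled-back determinant and taking determinants of a generically isomorphic injection) are correct and agree with the paper.
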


\begin{proof}
  % To prove \autoref{ONE}
  First, note that there is a natural injective morphism %injection
  \[
    \mu^* \Omega^n_{X/B} (\log \Delta)^{\otimes m} \hooklongrightarrow
    \Omega^n_{Y/B'} ( \log \Sigma)^{\otimes m},
  \]
  and hence another one
  \begin{equation}\label{1}
    f'_*\pi_*\pi^*  (\gamma')^* \Omega^n_{X/B} (\log \Delta)^{\otimes m}=
    g_*  \mu^* \Omega^n_{X/B} (\log \Delta)^{\otimes m}
    \hooklongrightarrow  
    g_* \Omega^n_{Y/B'} (  \log \Sigma)^{\otimes m},
  \end{equation}
  which is an isomorphism over a dense open subset of $B'$.
  % $\gamma^{-1} (B\setminus D_f)$.

  On the other hand, %by adjunction
  there exists a natural morphism, %in $D^b(X)$
  \[
    (\gamma')^* \Omega^n_{X/B} (\log \Delta)^{\otimes m} \longrightarrow %\dR
    \pi_* \pi^* (\gamma')^* \Omega^n_{X/B} (\log \Delta)^{\otimes m} %
    % \simeq \pi_* \mu^* \Omega^n_{X/B} (\log \Delta)^{\otimes m},
  \]
  which is
  % by construction, and using the projection formula,
  also an isomorphism over the preimage of a dense open subset of $B'$.
  % $\gamma^{-1}(B\setminus D_f)$.
  This morphism, combined with the one in \autoref{1} gives a morphism
  \begin{equation*}
    f'_*(\gamma')^* \Omega^n_{X/B} (\log \Delta)^{\otimes m} \longrightarrow  
    f'_*\pi_*\pi^*  (\gamma')^* \Omega^n_{X/B} (\log \Delta)^{\otimes m}\longrightarrow  
    g_* \Omega^n_{Y/B'} (  \log \Sigma)^{\otimes m},
  \end{equation*}
  which is again an isomorphism over a dense open subset of $B'$.  By flat base
  change, the left hand side is isomorphic to
  $\gamma^* f_* \big( \Omega^n_{X/B} (\log \Delta)^{\otimes m} \big)$ and hence
  \autoref{ONE} follows.
  %%% 
  %%% 
  % As a consequence, there is a morphism
  % \begin{equation}\label{2ND}
  %   \dR g_* (\gamma')^*  \Omega^n_{X/B} (\log \Delta)^{\otimes m} \longrightarrow
  %   \dR g_* \pi^* (\gamma')^*  \big( \Omega^n_{X/B} (\log \Delta) \big)^{\otimes
  %   m}. 
  % \end{equation}
  % By flat base change the left-hand side of \autoref{2ND} is isomorphic
  % $\gamma^* \dR f_* \big( \Omega^n_{X/B} (\log \Delta)^{\otimes m} \big)$, as objects
  % in $D^b(X)$.
  %%%
  % Now, by applying $\mathcal H^0$ to \autoref{2ND}, again by flatness of $\gamma$, we
  % find
  % \[
  %   \gamma^* \R^0 f_* \big( \Omega^n_{X/B} (\log \Delta) ^{\otimes m} \big)
  %   \longrightarrow \R^0 g_* \underbrace{ \pi^* (\gamma')^* \big( \Omega^n_{X/B}
  %     (\log \Delta)^{\otimes m} \big)}_{\mu^* \Omega^n_{X/B} (\log \Delta)^{\otimes
  %       m}},
  % \]
  % which is generically an isomorphism over $B'$. Composing this with \autoref{1}
  % establishes \autoref{ONE}.
  
  For \autoref{TWO}, %set $\wtilde Y:= \wtilde X \times_B B'$ and by
  eliminate the points of indeterminacy of the birational map
  $Y \dashrightarrow \wt X_{B'}$, %$=\wtilde X \times_B B'$ and
  let $\wtilde Y$ denote a resolution of singularities of the result and
  $\tau : \wtilde Y \to Y$ \hmarginpar{\tiny S: replaced $\pi$ with $\tau$, because
    $\pi$ was already used for another map. Check that I've got all the occurrences
    correctly! \bf{B: Done. All OK. }} %
  the induced projective birational morphism. We may assume that the induced morphism
  $\wtilde g: (\wtilde Y, \Delta_{\wtilde g})\to (B', D_{\wtilde g})$ is snc (here we
  are using \autoref{not:disc}), after removing a subset of $B'$ of
  $\codim_{B'} \geq 2$, if necessary. \hmarginpar{\tiny S': why do we need to remove
    anything? Couldn't we just do a further desingularization? \textbf{B: the snc
      condition was supposed to be on the morphism $\wtilde g$ and not
      $(\wtilde Y, \Delta_{\wtilde g})$.} \\ S; Uh! OK. }%
  We thus have the following commutative diagram:
  \[
    \xymatrix{ \wtilde Y \ar[d]_{\tau} \ar[rr] \ar[drr]^{\wtilde g} && \wtilde X_{B'}
      = \wtilde X \times_B B'
      \ar[rr] \ar[d] &&  (\wtilde X, \Delta_{\wtilde f}) \ar[d]^{\wtilde f} \\
      Y \ar[rr]^{g}_{\text{strong snc}} && B' \ar[rr]^{\gamma} && (B, D_{\wtilde f}).
    }
  \]
  According to \autoref{ONE} there is an injection
  \begin{equation}\label{INJ}
    \gamma^* \wtilde f_* \big(  \Omega^n_{\wtilde X/B} (\log \Delta_{\wtilde
      f})^{\otimes m} \big) \hooklongrightarrow 
    \wtilde g_* \Omega^n_{\wtilde Y/B'} (\log \Delta_{\wtilde g})^{\otimes m}.
  \end{equation}
  Moreover we have the following isomorphisms and containment:%
  \hmarginpar{\tiny S: I suppose the last isomorphism holds because $g$ is
    semistable. However, I think most people will be baffled by this, so we should
    add a lemma that says something like this (or more generally, without the
    semistable assumption, like in the next displayed row). If we're short on time,
    then we can do this after posting the first version to the arXiv. \bf{B: I have
      included an explanation.}}%
  \begin{equation}\label{eq:SSIsom}
    \begin{multlined}
      \hskip-2em \wtilde g_* \big( \Omega^n_{\wtilde Y/B'} (\log \Delta_{\wtilde
        g})^{\otimes m} \big) \simeq g_* \tau_* \big( \Omega^n_{\wt Y/B'} (\log
      \Delta_{\wtilde g})^{\otimes m} \big)
      \overset{~\autoref{lem:tau-injects}}{\into} \\ \into g_* \big( \Omega^n_{Y/B'}
      (\log \Delta_g)^{\otimes m} \big) \overset{\autoref{ssCase}}{\simeq}
      g_*\omega^m_{Y/B'} . \hskip -5em
    \end{multlined}
  \end{equation}
  Combining \autoref{INJ} and \autoref{eq:SSIsom} and taking determinants implies
  \autoref{TWO}.
\end{proof}

\begin{cor}
  Under the assumptions and notation of \autoref{lem:injection} and \autoref{TWO},
  there exists a natural injective morphism,
  \[
    \gamma^* \det \left( \big( \wtilde f_* \omega^m_{\wtilde X/B} \big) (- m\bdot
    D_{\wtilde f}) \right) \hooklongrightarrow \det g_* \omega^m_{Y/B'}.
  \]
  Furthermore, if in addition $\wt f$ is \reduced, then there exists a natural
  injective morphism,
  \[
    \gamma^*( \det \wtilde f_* \omega^m_{\wtilde X/B} ) \hooklongrightarrow \det g_*
    \omega^m_{Y/B'} .
  \] 
\end{cor}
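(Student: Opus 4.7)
The plan is to derive both injections as immediate consequences of \autoref{lem:injection}\,\autoref{TWO} combined with \autoref{lem:Lm-contains}, with essentially no new input needed.

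First, \autoref{lem:injection}\,\autoref{TWO} provides the natural injection
\[
\gamma^* \Lm(\wtilde f) \hooklongrightarrow \det g_* \omega^m_{Y/B'},
\]
so it suffices to produce an injection from $\det\!\big((\wtilde f_* \omega^m_{\wtilde X/B})(-m\bdot D_{\wtilde f})\big)$ into $\Lm(\wtilde f)$ on $B$, and then pull back by $\gamma$. Since $\wtilde X$ is smooth, $\omega^{[m]}_{\wtilde X/B}$ coincides with $\omega^m_{\wtilde X/B}$, so the containment \autoref{eq:4} of \autoref{lem:Lm-contains} applied to $\wtilde f$ yields precisely such an inclusion of line bundles on $B$. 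Pulling back by the flat morphism $\gamma$ preserves injectivity, and composing with the injection above produces the first claim of the corollary.

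For the second statement, observe that if $\wtilde f$ is in addition \reduced, then by definition $\wtilde f^* D_{\wtilde f}$ is reduced, hence the divisorial non-reduced locus $R_{\wtilde f}$ defined in \autoref{not:non-reduced} is empty. Then \autoref{eq:5} of \autoref{lem:Lm-contains} gives the identification $\Lm(\wtilde f) \simeq \det \wtilde f_* \omega^m_{\wtilde X/B}$. Substituting this into the injection from \autoref{lem:injection}\,\autoref{TWO} yields the second claim at once.

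There is no substantive obstacle here: the corollary is essentially a cosmetic reformulation of \autoref{lem:injection}\,\autoref{TWO} that replaces $\Lm(\wtilde f)$ with the more familiar twists of $\det \wtilde f_* \omega^m_{\wtilde X/B}$. The compatibility of the maps under pullback and composition is automatic because the sheaves in question are line bundles on regular schemes, $\gamma$ is flat (so pullback preserves injections), and $\omega^{[m]}_{\wtilde X/B} = \omega^m_{\wtilde X/B}$ on the smooth scheme $\wtilde X$.
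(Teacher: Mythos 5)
Your proposal is correct and takes essentially the same route as the paper, whose proof is precisely to combine \autoref{lem:Lm-contains} (the containment \autoref{eq:4} for the first claim, and the identification \autoref{eq:5} when $R_{\wtilde f}=0$ for the \reduced case) with the injection of \autoref{lem:injection}\,\autoref{TWO}; you have merely spelled out the details (regularity of $\wtilde X$ giving $\omega^{[m]}_{\wtilde X/B}=\omega^m_{\wtilde X/B}$, and flatness of $\gamma$ preserving injectivity).
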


\begin{proof}
  This follows directly from \autoref{lem:Lm-contains} and \autoref{TWO}.
\end{proof}

  %  Next, we note that there is a natural injection 
  % \[
  %   \omega_{\wtilde X/B} ( - \wtilde f^* D_{\wtilde f} ) \hooklongrightarrow
  %   \Omega^n_{\wtilde X/B} (\log \Delta_{\wtilde f}) \simeq \omega_{\wtilde X/B}
  %   \big( \Delta_{\wtilde f} - \wtilde f^* D_{\wtilde f} \big)
  % \]
  % Therefore, after raising this latter injection to the power $m$, pushing forward
  % and taking $\det(\sblank)$, \autoref{INJ} leads to the desired injection,
  % \hmarginpar{\tiny S: there should be a comment about the semistable case, that is
  %   that if $f$ is semistable, then there is no need to subtract anything. \bf{B: I
  %     added an explanation.}}%
  % \[
  %   \gamma^* \underbrace{\det \left( \big( \wtilde f_* \omega^m_{\wtilde X/B} \big) 
  %   (-  m\bdot D_{\wtilde f}) \right)}_{\Lm(\wtilde f)} \hooklongrightarrow \det g_*
  %   \omega^m_{Y/B'}.
  % \]  
  % Finally, when $\wtilde f$ is semistable, then by \autoref{INJ}, \autoref{ssCase} 
  % and \autoref{eq:SSIsom}
  % we have 
  % \[
  % \gamma^*(\wtilde f_* \omega^m_{\wtilde X/B})  \hooklongrightarrow  g_*\omega^m_{Y/B'}
  % \]
  % so that 

\begin{proposition}\label{prop:injection}
  In the situation of the \autoref{not:L}, assume that $X$ and $B$ are regular and quasi-projective. 
  % and strongly ample in codimension one.
  After removing a subscheme of $B$ of $\codim_B\geq 2$ if necessary, there exists an
  injection 
  \begin{equation}\label{KeyInject}
    \Lm(f)^{\otimes k}   \hooklongrightarrow  
    f_*^{ ( k\bdot r_m ) } \omega^{m}_{X^{( k\bdot r_m )}/ B},
  \end{equation}
  for any $k \in \bN$ where $r_m: = \rank(f_* \omega^{m}_{X/B})$.
\end{proposition}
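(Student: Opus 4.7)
The plan is to follow the approach of \autoref{prop:inj-for-stable}, combining the natural alternating embedding of the determinant into a tensor power with a Künneth-type comparison between $r$-fold tensor products and $r$-fold fiber product pushforwards. The stable hypothesis in \autoref{prop:inj-for-stable} made the Künneth identity available via \autoref{prop:rfold}; under the weaker hypothesis here (only $X$ and $B$ regular), the fiber product $X^r$ can be quite singular and $f$ can have non-reduced fibers, so the correct substitute for \autoref{prop:rfold} is the pushforward from the strong resolution $\pi\colon X^{(r)}\to X^r$, with the twist by $-mR_f$ in the definition of $\Lm(f)$ playing the role of compensating for the non-reduced locus.

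First I would shrink $B$ by a subscheme of codimension $\geq 2$ so that $f_*\omega^m_{X/B}(-mR_f)$ becomes locally free of rank $r_m$, making $\Lm(f)$ an honest line bundle. By replacing $X$ with a suitable birational modification if necessary (and using \autoref{lem:tau-injects} to control the effect on pushforwards), we may also assume that $f\colon (X,\Delta_f)\to (B,D_f)$ is an snc morphism. The standard alternating injection then gives
\[
\Lm(f)=\det f_*\omega^m_{X/B}(-mR_f) \hooklongrightarrow \bigotimes^{r_m} f_*\omega^m_{X/B}(-mR_f),
\]
and raising to the $k$-th tensor power yields
\[
\Lm(f)^{\otimes k} \hooklongrightarrow \bigotimes^{kr_m} f_*\omega^m_{X/B}(-mR_f).
\]
This reduces the proposition to a Künneth-type claim: for each $r\in\bN$,
\[
\bigotimes^{r} f_*\omega^m_{X/B}(-mR_f) \hooklongrightarrow f^{(r)}_*\omega^m_{X^{(r)}/B}.
\]

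To build this last injection, note that by the projection formula its source equals $\bigl(\bigotimes^r f_*\omega^m_{X/B}\bigr)\otimes \sO_B(-mrR_f)$. Over the smooth locus $V\leteq B\setminus D_f$ one has $R_f\cap V=\emptyset$ and $X^r_V$ is smooth, so $X^{(r)}_V=X^r_V$ and flat base change together with the standard Künneth formula for smooth morphisms yields
\[
\Bigl(\bigotimes^{r} f_*\omega^m_{X/B}(-mR_f)\Bigr)\big|_V \;\simeq\; f^{(r)}_*\omega^m_{X^{(r)}/B}\big|_V,
\]
which provides the map generically. To extend it across $D_f$ (up to codimension two in $B$), I would use the snc identification $\Omega^{n}_{X/B}(\log\Delta_f)^{\otimes m}\simeq \omega^m_{X/B}\bigl(m(f^{-1}R_f-f^*R_f)\bigr)$ from \autoref{eq:7} together with its analogue on $X^{(r)}$, and observe that $f^{(r)}_*\omega^m_{X^{(r)}/B}$ is torsion-free, being the pushforward of a line bundle from the regular scheme $X^{(r)}$. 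The twist by $-mrR_f$ on the source is what cancels the correction $r\cdot m(f^{-1}R_f-f^*R_f)$ produced by the $r$ factors of the fiber product, so that the generic isomorphism extends as a regular (hence injective) map of sheaves.

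The main obstacle is precisely the verification in the last paragraph: showing that the twist $-mrR_f$ is the exact compensation turning the generic Künneth isomorphism into a global injection in the presence of non-reduced fibers of $f$ and of the exceptional locus of $\pi\colon X^{(r)}\to X^r$. In practice this boils down to a local calculation at a codimension-one point of $R_f$ on the snc model, using \autoref{lem:tau-injects} to bound the contribution of $\pi$ and the explicit formula for $\Omega^{n}_{X/B}(\log\Delta_f)$ to track multiplicities along $R_f$; torsion-freeness of the target then upgrades the map, which is an isomorphism over $V$, to an injection on all of $B$ (up to codimension two).
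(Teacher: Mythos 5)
Your approach is genuinely different from the paper's, and the difference is exactly where the gap lies. The paper does not attempt the direct Künneth-type inclusion
\[
\bigotimes^{r} f_*\omega^m_{X/B}(-mR_f) \hooklongrightarrow f^{(r)}_*\omega^m_{X^{(r)}/B};
\]
it deliberately routes around it via a semistable reduction $\gamma\colon B'\to B$, using \autoref{TWO} to get $\gamma^*\Lm(f)\hooklongrightarrow \det g_*\omega^m_{Y/B'}$, then the Künneth isomorphism (which is available over the semistable locus, and only there, because the semistable fiber products are toroidal), then Viehweg's comparison $g^{(r)}_*\omega^m_{Y^{(r)}/B'}\subseteq \gamma^* f^{(r)}_*\omega^m_{X^{(r)}/B}$ from \cite[\S 3, p.~336]{Viehweg83}, and finally descent by $\gamma_*(\sblank)^G$. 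That chain is carefully designed so that the only place one needs to control a resolution of a fiber product is on the semistable side, where it is tractable.

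You identify the inclusion above as ``the main obstacle'' and propose to settle it by a local calculation at codimension-one points, arguing that the twist by $-mR_f$ is ``the exact compensation.'' This is where the argument breaks. The twist $-m\bdot R_f$ in $\Lm(f)$ only sees the locus of \emph{non-reduced} fibers. But the difficulty in comparing $\bigotimes^r f_*\omega^m_{X/B}$ with $f^{(r)}_*\omega^m_{X^{(r)}/B}$ is present over \emph{all} of $D_f$, including the divisorial components where the fibers are reduced but singular: there $R_f$ contributes no twist at all, while the $r$-fold fiber product $X^r$ can acquire singularities over $D_f$ whose strong resolution $X^{(r)}\to X^r$ produces a pushforward that is not obviously comparable to $\bigotimes^r f_*\omega^m_{X/B}$. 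There is no formula of the shape ``$\Omega^n_{X^{(r)}/B}(\log\Delta_{f^{(r)}})^{\otimes m}\simeq$ ($r$-fold box tensor of the corresponding sheaf on $X$)'' without first passing to a model where the fiber products are mild; the relation in \autoref{eq:7} lives on $X$, and its ``analogue on $X^{(r)}$'' involves the discrepancy divisor of $X^{(r)}\to X^r$ and a boundary $\Delta_{f^{(r)}}$ that is \emph{not} simply the $r$-fold box product of $\Delta_f$. Torsion-freeness of $f^{(r)}_*\omega^m_{X^{(r)}/B}$ upgrades an existing regular map, but you have not produced a regular map over $D_f$; a generic isomorphism between a locally free sheaf and a torsion-free sheaf of the same rank can fail to extend in the direction you want, and deciding the direction is precisely the content of Viehweg's lemma. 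In short, you are implicitly re-proving the cited result of Viehweg without semistable reduction; that is a substantially harder statement than a local codimension-one computation, and as written the proof does not go through.

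A further, smaller point: the reduction ``by replacing $X$ with a suitable birational modification'' to arrange that $f$ is snc also changes $\Lm(f)$ and the resolutions $X^{(r)}$; \autoref{lem:tau-injects} controls the $r=1$ pushforward on $X$, but you would still need the corresponding comparison on the $r$-fold fiber products, which again is not free. The paper sidesteps this by working with the semistable model throughout and only using \autoref{lem:tau-injects} inside the proof of \autoref{TWO}.
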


\begin{remark}\label{rem:inj-for-stable}
  Notice that by \autoref{prop:inj-for-stable}, we have that if in addition $f$ is
  stable in codimension one, then after removing a subscheme of $B$ of $\codim_B\geq 2$ if necessary,
  there exists an injection
  \begin{equation}\label{KeyInject-stable}
    \Lm(f)^{\otimes k}   \hooklongrightarrow  
    f_*^{ k\bdot r_m  } \omega^{[m]}_{X^{( k\bdot r_m )}/ B},
  \end{equation}
  for any $k \in \bN$ where $r_m: = \rank(f_* \omega^{[m]}_{X/B})$.
\end{remark}

\begin{proof}
  % We first prove the case when $X$ is regular and $f$ is not necessarily stable.
  % \marginpar{\tiny S: with the new definition, we might not need to separate this
  %   case}%
  % To this end,
  Without loss of generality we may assume that $(X, \Delta_f) \to (B, D_f)$ is snc.
  Let $g: Y \to B'$ be a semistable reduction of $f$ in codimension one, via the
  finite, flat, surjective and Galois morphism $\gamma: B' \to B$, (cf.~\cite{KKMS}
  and \cite{BG71}, and also \cite{Abramovich-Karu00}*{\S5}, \cite{MR4098246},
  \cite{Laz04-I}*{4.1.6}). Let $G:= \Gal(B'/B)$.  By \autoref{TWO} we have
  \hmarginpar{\tiny WARNING! This is not entirely true! \autoref{TWO} only implies
    this if $f$ is snc! \\
    B: I am not sure what the problem was but I am guessing it had something to do
    with those missing assumptions in the setting of the proposition that I have now
    added (due to the change of definition for $\sW_m$)\\
  \color{blue} S: OK, this seems to have fixed it.}%
  \begin{equation}\label{eq:LBaseChange}
    \gamma^*\Lm(f)  \hooklongrightarrow  \det g_*  \omega^m_{Y/B'}, 
  \end{equation}
  which is generically an isomorphism over $B'$.

  By raising \autoref{eq:LBaseChange} to the power $k$ we obtain the injections
  \hmarginpar{\tiny S: why the two different notation? \bf{B: I think you were
      referring to the awkward $m^a\bdot e$ (which is what appears later on in
      application). Of course there is no need for it here so I changed it to $k$.}}%
  \[
    \gamma^* \Lm(f)^{\otimes k} \hooklongrightarrow \big( \det g_*
    \omega^m_{Y/B'}\big)^{\otimes k} \hooklongrightarrow \bigotimes^{ k \bdot r_m}
    g_* \omega^m_{Y/B'} .
  \]
  On the other hand, over the semistable locus of $g$ we have
  \[
    \bigotimes^{k \bdot r_m} g_* \omega^m_{Y/B'} \simeq 
    g^{(k \bdot r_m)}_* \omega^m_{Y^{(k \bdot r_m)} /
      B'}.
  \]
  Furthermore, \cite[\S 3, p.~336]{Viehweg83} \hmarginpar{\tiny S: Can we make this
    reference more explicit?
    \\
    B. Unfortunately one needs to look inside the proof but I have added a page
    number \\ \color{blue} S: This is good enough. Thanks.}%
  implies that
  \[
    g^{(k \bdot r_m)}_* \omega^m_{Y^{(k \bdot r_m)} /
      B'} \subseteq \gamma^* f_*^{(k \bdot r_m)} \omega^m_{X^{(k
        \bdot r_m)}/B}
  \]
  so
  \begin{equation}\label{eq:pullback1}
    \gamma^* \Lm (f)^{\otimes k}   \hooklongrightarrow  \gamma^* f_*^{(k
      \bdot r_m)}  \omega^m_{X^{(k \bdot r_m)}/ B} . 
  \end{equation}
  The required injection \autoref{KeyInject} follows from applying the functor
  $\gamma_*(\sblank)^G$ to (\ref{eq:pullback1}).
  % 
  % \marginpar{\tiny S: why do we need to separate the stable case?}%
  % The proof of \autoref{prop:injection} in the stable case follows from an identical
  % (albeit easier) argument together with \autoref{rfold}.
  % \hmarginpar{\tiny S: this might need a little more love in the final version (after
  %   posting it) \bf{B: Is this reasonable now?}} %
\end{proof}

\subsection{Positivity notions for families of varieties}
\begin{definition}\label{def:GGG}
  An $\sO_Y$-module $\sF$ on a reduced scheme $Y$ is called \emph{globally generated
    over an open subset $V\subseteq Y$}, if the natural map
  \[
    H^0(Y, \sF) \otimes \sO_{V} \longrightarrow \sF \otimes \sO_{V}
  \]
  is surjective over $V$. When the open set $V$ is not specified, we say $\sF$ is
  \emph{generically generated by global sections over $Y$}. \hmarginpar{\tiny S: I
    think ``generically generated by global sections over $Y$'' is better than
    ``generically globally generated over $Y$'', because the latter could be
    understood that restricted to a dense open set it is globally generated
    there. But that holds for everything. \bf{B: Agreed! I blame Viehweg for this!!
      We just need to remember to change this expression accordingly in Section 4.}}%
\end{definition}

%\begin{definition}
%  Let $f: X\to B$ be a flat projective morphism of reduced schemes with connected
%  fibers, and $\sL$ an invertible sheaf on $X$. Then $\sL$ is called \emph{$f$-base
%    point free over an open set $V\subseteq B$}, if the natural morphism
%  $f^*f_*\sL \longrightarrow \sL$ is surjective over $f^{-1}V$.
%\end{definition}

Recall that given a regular quasi-projective variety $X$ and an open subset
$U\subseteq X$, a torsion free sheaf $\sF$ on $X$ is called \emph{weakly positive
  over $U$}, if $\sF|_U$ is locally free and that for any ample line bundle
$\mathcal H$ on $X$, and every $\alpha\in \bN$, there exists a $\beta\in \bN$ such
that
\[
  \sym^{[\alpha \bdot k]}\sF \otimes \mathcal H^{k}
\]
is globally generated over $U$, for any multiple $k$ of $\beta$.

By \cite{Kaw81} and \cite{Viehweg83} (see also \cite{Fuj78}, \cite{Zuc82} and
\cite{Kol86}) it is known that for a projective morphism $f: X\to B$ of regular
quasi-projective varieties $X$ and $B$, with connected fibers, $f_*\omega^m_{X/B}$ is
weakly positive over $B \backslash \disc(f)$, for any $m\in \bN$.  One can then
slightly generalize this to the case of mildly singular families as follows.  For any
projective morphism $f:X\to B$ of quasi-projective varieties $X$ and $B$, if $B$ is
nonsingular and $X$ has only canonical singularities, then
% for every multiple $m$ of the \emph{index} $n_0$ of $\omega_{X/B}$
% \footnote{By index of a reflexive rank one sheaf $\sL$ we mean an integer $n_0$ --
% not necessarily minimal-- for which $\sL^{[n_0]}$ is invertible.}
the torsion free sheaf $f_*\omega^{[m]}_{X/B}$ is weakly positive for every $m\in\bN$
for which $\omega^{[m]}_{X/B}$ is invertible (see also~\cite{Fuj18}).

\subsection{Singularities in linear systems of ample line bundles}
\label{subsection:EV}

\begin{def-not}\label{def:EV}
  Following the definition of Esnault-Viehweg \cite{EV92}*{Def.~7.4} for a line bundle $\sL$ on projective manifold $X$, 
  with $H^0(X, \sL) \neq 0$, we
  define
  \[
    e(\sL): = \sup\left\{ \left\lfloor \frac{1}{\rm{lct}(\Gamma)} \right\rfloor +1 \;
      \bigg| \; \Gamma\in |\sL| \right\} ,
  \]
  where $\rm{lct}(\sblank)$ denotes the log-canonical threshold.
\end{def-not}
\hmarginpar{\tiny{B: I had to change this definition a little to be absolutely
    consistent with [EV]. I think $\frac{1}{\rm{lct}}$} would also work but then a
  reviewer may complain that since the definition is not entirely consistent with
  [EV] we have to reprove a few things that we need later on in Section 3.}%

\noindent
If $\sL$ is very ample, then 
\begin{equation}
  \label{eq:6}
  e(\sL^{m}) \leq m \bdot c_1(\sL)^{\dim X}+1 ,
\end{equation}
for every integer $m>0$ by \cite{EV92}*{Lem.~7.7}.  Therefore, for the set of very
ample line bundles on projective manifolds with fixed Hilbert polynomial $h$, the
number $e(\sblank)$ has an upperbound depending on $h$.  \hmarginpar{\tiny S:
  and $\nu$, no?  \textbf{B: Well, if you take powers then yes $\nu$ as well but I
    found it unnecessary to mention that here but of course in what comes up next it
    shows up. This sentence was supposed to be segue.}}%
As the moduli functor of canonically polarized manifolds with fixed canonical
\hmarginpar{\tiny S: this is not used consistently later. I think we should probably
  just say somewhere that by ``Hilbert poly'' we will mean the Hilbert poly of the
  canonical bundle. \bf{B: Done. See Convention below.}}%
Hilbert polynomial $h$ is bounded \cite{Mat72} (see \cite[Def.~1.15 (1)]{Viehweg95}
for the definition), one can find an integer
\begin{equation}\label{a_0}
  a_0=a_0(h)\in \bN
\end{equation}
such that, for every integer multiple $m$ of $a_0$, the line bundle $\omega^m_X$ is
\hmarginpar{\tiny S: changed ``very ample'' to ``strongly ample''}%
strongly ample (cf.~\autoref{def:strongly-ample}), and then by \autoref{eq:6} there
exists an $e_m\leteq e_m(h)\in \bN$, depending only on $m$ and $h$, that satisfies
the inequality
\begin{equation}\label{eq:EVIneq}
  e(\omega^m_X) \leq e_m 
\end{equation}
for every manifold as above.

As we mentioned in \autoref{rem:volume-instead-of-Hilb-poly}, we will be following the
terminology of \cite{ModBook} with regard to moduli functors and moduli spaces. In
particular, instead of a Hilbert polynomial we will be using the dimension, $n$, and the
canonical volume $\nu$. So, we may rephrase the above statement using $(n,\nu)$ in
place of $h$, and say that $a_0=a_0(n,\nu)$ and $e_m=e_m(n,\nu)$ depend on these
quantities.

\begin{notation}\label{not:e}
  For every projective morphism of normal schemes $f:X \to B$ whose general fiber is
  a canonically polarized manifold, every sufficiently divisible $m\in \bN$ and any
  $a\in \bN$, we set $\tma{a}: = m^a \bdot e_m \bdot r_m$, where $e_m$ is as in
  \autoref{eq:EVIneq} and $r_m:=\rank( f_*\omega^{[m]}_{X/B})$.
\end{notation}

\hmarginpar{\tiny S: I removed the ``convention'' for the Hilbert polynomial as we
  probably don't need it anymore.}

\section{\Aug direct image sheaves and Viehweg numbers}
\label{sect:Section3-Prelim}

\subsection{Generic global generation of \aug direct image sheaves in stable
  families} 
We define a notion of \aug direct images sheaves, which is closely connected to our
notion of Viehweg numbers, to be introduced later in this section.  \hmarginpar{\tiny
  S: I changed ``augmented'' to ``twisted''. I think augmented means something
  completely different. But we can change it easily as I replaced the word with a
  command. \bf{B:Twisted sounds good. I did not use twisted because some -especially
    analytic- people use this with some positivity assumptions on the twist which has
    nothing to do with what we do.}}

\begin{definition}[\Aug direct image sheaves]\label{def:Augmented}
  In the settings of \autoref{not:L}, \autoref{not:e}, set $V:= B\backslash D$ and
  assume that for all $v\in V$ each fiber $X_v$ is regular of dimension $n$ and of
  canonical volume $\nu$.
  % with Hilbert polynomial $h$ (cf.~\autoref{convention-Hilbert}).
  For each multiple $m\in \bN$ of $a_0(n,\nu)$ (cf.~\autoref{a_0}), we define the
  \emph{\aug direct image sheaf} by the following (recall, that
  $t_{m,2}=m^2e_mr_m$):%
  \hmarginpar{\tiny S: I don't like $f^{\{t\}}$!!!! \\
    B: In the non-stable we, no need for reflexive powers. I deleted the brackets.}%
  \[
    \sK^{\nu}_m(f) : =
    \begin{cases}
      f^{\tma 2}_* \omega^{[m]}_{X^{\tma 2}/B} \otimes \Lm(f)^{-m} & \text{if
        $f$ is stable, and } \\
      f^{(\tma 2)}_* \omega^{m}_{X^{(\tma 2)}/B} \otimes \Lm(f)^{-m} & \text{if $f$
        is not stable, but $X$ is regular.}
    \end{cases}
  \]
  To avoid cumbersome notation, when there is no ambiguity, we omit $f$ from $\Lm(f)$
  in the notation.  \hmarginpar{\tiny S: we should probably decide whether we want to
    say ``regular'' or ``smooth''. Here $B$ is declared regular, while $X_v$ is
    smooth. We should be consistent. \bf{B: Let's go with regular.}}
\end{definition}
\hmarginpar{\tiny S: I am assuming by $n_0$ you mean the index, but this is not
  defined anywhere before page 15. It is also not something one can expect the reader
  to remember...so I changed it to be more concrete. \\ Also, directing the reader
  for the notation to an entire section is not really fair...  \bf{B: Thanks. I have
    also labelled an independent equation where we define $a_0$ and referred to it in
    Def. 3.1.}}

\subsection{The stable case}

\begin{proposition}[Base change for $\sK^{\nu}_m$]\label{prop:BC}
  Let $f: X \to B$ be a stable family with a normal general fiber. Assume that $B$ is
  a regular quasi-projective variety
  % . as in \autoref{prop:wp}
  and $\psi: B'\to B$ is a morphism of finite type, with $B'$ also regular. Let $Z$
  be the main component of $X_{B'}$,\hmarginpar{\tiny S: this used to be $Y_{B'}$. I
    assumed it was a typo}%
  equipped with the natural projection $g: Z\to B'$.  Finally, let $m\in\bN$ be such
  that $\omega^{[m]}_{X/B}$ is a strongly $f$-ample line bundle
  (cf.~\autoref{def:strongly-ample}).  Then
  %, over the locally free locus of $f_*\omega^{[m]}_{X/B}$, we have
  $\psi^* \sK^{\nu}_m(f) \cong \sK^{\nu}_m(g)$.
\end{proposition}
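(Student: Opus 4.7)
The plan is to reduce the identity $\psi^*\sK^{\nu}_m(f)\simeq \sK^{\nu}_m(g)$ to base change statements for $\omega^{[m]}_{X^r/B}$ and its pushforward, where $r=\tma 2$, supplemented by the determinant identity $\psi^*\Lm(f)\simeq\Lm(g)$. First I would verify that $Z$ coincides with $X_{B'}$ as a scheme: flatness of $f$ forces every associated point of $X_{B'}$ to dominate $B'$, and the normality of the general fibre of $f$ makes the general fibre of the projection $X_{B'}\to B'$ integral, so $X_{B'}$ is integral. Hence $Z=X_{B'}$, and this family is again stable by the base change properties of \autoref{rem:BaseChange}; in particular $\omega^{[m]}_{Z/B'}\simeq \pr_X^*\omega^{[m]}_{X/B}$, so $\omega^{[m]}_{Z/B'}$ remains strongly $g$-ample, being a fibrewise property.

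Next I would pass to the iterated fibre products. By \autoref{prop:rfold} the family $f^{\tma 2}\colon X^{\tma 2}\to B$ is stable and $\omega^{[m]}_{X^{\tma 2}/B}$ is invertible; moreover a Künneth argument on the fibres upgrades the strong $f$-ampleness of $\omega^{[m]}_{X/B}$ to the strong $f^{\tma 2}$-ampleness of $\omega^{[m]}_{X^{\tma 2}/B}$. Since the fibre product construction commutes with base change, $(X_{B'})^{\tma 2}=(X^{\tma 2})_{B'}$, and so invoking \autoref{rem:BaseChange} for the stable family $f^{\tma 2}$ yields simultaneously
\[
\omega^{[m]}_{Z^{\tma 2}/B'}\simeq \pr^*\omega^{[m]}_{X^{\tma 2}/B}
\qquad\text{and}\qquad
\psi^*\bigl(f^{\tma 2}_*\omega^{[m]}_{X^{\tma 2}/B}\bigr)\simeq g^{\tma 2}_*\omega^{[m]}_{Z^{\tma 2}/B'},
\]
the second using \autoref{eq:1} and the strong ampleness verified above.

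The third step handles the determinant twist. Applying \autoref{eq:1} to $f$ itself gives $\psi^*f_*\omega^{[m]}_{X/B}\simeq g_*\omega^{[m]}_{Z/B'}$; since stable families have reduced fibres, \autoref{lem:Lm-contains} identifies $\Lm(f)=\det f_*\omega^{[m]}_{X/B}$ (and the analogous statement for $g$), so taking determinants produces $\psi^*\Lm(f)\simeq \Lm(g)$. Raising this to the $(-m)$-th power and tensoring with the pushforward identification from the previous paragraph delivers
\[
\psi^*\sK^{\nu}_m(f)=\psi^*\bigl(f^{\tma 2}_*\omega^{[m]}_{X^{\tma 2}/B}\bigr)\otimes \psi^*\Lm(f)^{-m}\simeq g^{\tma 2}_*\omega^{[m]}_{Z^{\tma 2}/B'}\otimes \Lm(g)^{-m}=\sK^{\nu}_m(g).
\]

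The main obstacle will be carefully justifying the two technical inputs that make the base change go through cleanly: the integrality of $X_{B'}$ (so that $Z=X_{B'}$ and no main-component subtleties enter), and the strong $f^{\tma 2}$-ampleness of $\omega^{[m]}_{X^{\tma 2}/B}$ (so that \autoref{eq:1} applies to $f^{\tma 2}$ and not just to $f$). Once these are pinned down, the proof is essentially a bookkeeping exercise combining \autoref{prop:rfold} with the base change package of \autoref{rem:BaseChange}.
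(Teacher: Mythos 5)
Your proposal is correct and follows essentially the same route as the paper's one‑line proof: both reduce the claim to \autoref{prop:rfold} plus \autoref{rem:BaseChange}, handling the pushforward piece $\psi^*\bigl(f^{\tma 2}_*\omega^{[m]}_{X^{\tma 2}/B}\bigr)\simeq g^{\tma 2}_*\omega^{[m]}_{Z^{\tma 2}/B'}$ and the determinant piece $\psi^*\Lm(f)\simeq\Lm(g)$ separately. You supply two verifications the paper leaves tacit, and the K\"unneth argument for strong $f^{\tma 2}$‑ampleness of $\omega^{[m]}_{X^{\tma 2}/B}$ is exactly right.

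The one place to be more careful is your claim that $Z=X_{B'}$. Your integrality argument identifies the general fibre of $g\colon X_{B'}\to B'$ with a field extension of the normal general fibre of $f$, which needs $\psi$ to be dominant. The proposition allows an arbitrary finite‑type morphism, and in its application in \autoref{lem:gg} $\psi$ is an inclusion of a slice; then $\psi(\eta_{B'})$ is a special point of $B$ whose fibre is merely slc — possibly non‑normal or even reducible — and $X_{B'}$ need not be integral. That is precisely why the statement hedges with ``main component.'' Your step identifying $Z^{\tma 2}$ with $(X^{\tma 2})_{B'}$ quietly relies on $Z=X_{B'}$, so without the dominance hypothesis it would need to be adjusted (e.g., by shrinking $B'$ generically, which suffices for the downstream use). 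This is a minor issue — the paper's own proof elides the same point — but it is a genuine hypothesis you've added.
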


\begin{proof}
This directly follows from \autoref{rem:BaseChange}. That is, we consider the two 
morphisms 
\[
f^{\tma 2} :  X^{\tma 2} \to B   \;\;\;  , \;\;\;  g^{\tma 2}: Z^{\tma 2} \to B' ,
\]
and observe that by \autoref{prop:rfold} and \autoref{rem:BaseChange} we have 
\[
\psi^* f_*^{\tma 2} \omega^{[m]}_{X^{\tma 2}/B}  \cong  g^{\tma 2}_* \omega^{[m]}_{Z^{\tma 2}/ B'}  .
\]
On the other hand, if $\omega^{[m]}_{X/B}$ is strongly $f$-ample, then 
%over the locally free locus of $f_*\omega^{[m]}_{X/B}$, we have
$\psi^* \Lm(f)\cong \Lm(g)$, which gives the desired isomorphism.
\end{proof}

\begin{proposition}\label{prop:wp}
  Let $f:X\to B$ be a stable family, where $B$ is a regular quasi-projective variety.
  Assume that there is an open subset $V\subseteq B$ such that for every $v\in V$,
  the fiber $X_v$ is regular of dimension $n$ and canonical volume $\nu$.
  % with Hilbert polynomial $h$.
  Then $\sK^{\nu}_m(f)$ is weakly positive for every multiple $m$ of $a_0(n,\nu)$
  (cf.~\autoref{a_0}).
  % $\mathrm{lcm}(n_0, a_0(h))$ (see \autoref{sect:Section2-background} for
  % notations).
\end{proposition}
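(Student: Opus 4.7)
The strategy I would follow is to adapt Viehweg's classical cyclic-covering trick to the stable setting, exploiting the precise numerical choice $\tma 2 = m^2 \bdot e_m \bdot r_m$ built into the definition of $\sK^\nu_m(f)$. After removing a subscheme of $B$ of codimension at least two (which is harmless, since $B$ is regular and weak positivity is determined in codimension one), I would use \autoref{prop:rfold} together with \autoref{lem:Lm-contains} to rewrite
\[
\sK^\nu_m(f) \cong \sF^{\otimes \tma 2} \otimes (\det \sF)^{-m}, \qquad \text{where } \sF \leteq f_*\omega^{[m]}_{X/B} \text{ has rank } r_m.
\]
The basic input is the extension to stable families of the Kawamata--Viehweg--Koll\'ar--Fujita weak positivity theorem, recorded in \S2: the sheaf $\sF$, and more generally every fiber-product pushforward $f^r_*\omega^{[m]}_{X^r/B}$, is weakly positive over $V$.

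To establish weak positivity of $\sK^\nu_m(f)$ itself, the plan is to fix an ample line bundle $\mcH$ on $B$ and $\alpha \in \bN$, and to exhibit $\beta \in \bN$ such that $\Sym^{[\alpha k]}(\sK^\nu_m(f)) \otimes \mcH^k$ is globally generated over $V$ for every multiple $k$ of $\beta$. The central step is a cyclic covering construction: the Esnault--Viehweg bound $e(\omega^m_{X_v}) \leq e_m$ recorded in \autoref{eq:EVIneq} guarantees that general members of $|\omega^{[m]}_{X_v}|$ have log-canonical threshold uniformly bounded below across $V$, so that, starting from a section of $\Lm(f)^{\otimes mk}$ viewed, via the injection $\Lm(f)^{\otimes k} \hookrightarrow f_*^{k r_m}\omega^{[m]}_{X^{k r_m}/B}$ of \autoref{rem:inj-for-stable}, as a section of an appropriate pluricanonical sheaf on a sufficiently large fiber product, one can form a degree-$m$ cyclic cover branched along the associated divisor. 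Resolving this cover produces a family $g: Y \to B$ to which the classical weak positivity theorem applies, and whose pushforwards can be arranged to surject onto $\Sym^{[\alpha k]}(\sK^\nu_m(f)) \otimes \mcH^k$ over $V$. The base-change property \autoref{prop:BC} ensures that this construction is compatible with restriction to general points of $B$.

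The hard part will be the combinatorial bookkeeping of exponents. The factor $e_m$ inside $\tma 2$ is precisely what makes the degree-$m$ cyclic cover available uniformly across members of $\smn(V)$; the factor $r_m$ accounts for the wedge-into-tensor embedding $\det \sF \hookrightarrow \sF^{\otimes r_m}$ underlying \autoref{rem:inj-for-stable}; and the factor $m^2$ provides the extra room needed to absorb both the negative $\Lm^{-m}$ twist and the branching contribution of the cyclic cover. Checking that these exponents align so that the pushforward of the pluricanonical sheaf on the cover genuinely dominates $\Sym^{[\alpha k]}(\sK^\nu_m(f)) \otimes \mcH^k$ for $k$ sufficiently divisible is the computational heart of the argument; once the correct covering is identified, the remainder reduces to a standard application of weak positivity of pluricanonical pushforwards on the cover, followed by descent to $B$.
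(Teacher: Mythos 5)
Your proposal has the right overall shape: the cyclic covering trick, controlled by the Esnault--Viehweg bound $e(\omega^m_{X_v})\leq e_m$ and by the injection of \autoref{rem:inj-for-stable}, combined with weak positivity of pluricanonical pushforwards. But there is a genuine confusion about the covering datum. You propose to start from a global section of $\Lm(f)^{\otimes mk}$ and push it into a fiber product via \autoref{rem:inj-for-stable}. However, $\Lm(f)$ is the determinant of a direct image sheaf on a base of arbitrary dimension; there is no reason for any of its positive powers to have global sections, so the proposed starting datum need not exist. What the paper actually uses is the \emph{adjoint} of the inclusion $\Lm^{m^2 e_m}\hookrightarrow f^{\tma 2}_*\omega^{[m]}_{X^{\tma 2}/B}$, viewed as a tautological global section of $\omega^{[m]}_{X^{\tma 2}/B}\otimes (f^{\tma 2})^*\Lm^{-m^2 e_m}$: this section always exists, its zero divisor $\Gamma\subset X^{\tma 2}$ is the branching datum, and it is the restrictions $\Gamma|_{X^{\tma 2}_v}$ whose log-canonical thresholds the bound $e_m$ controls.

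Structurally the paper also diverges from your plan. Rather than iterating the covering construction once per pair $(\alpha,k)$ in the definition of weak positivity, it constructs a single auxiliary line bundle $\wtilde\sM$ on a strong resolution $X^{(\tma 2)}\to X^{\tma 2}$, namely $\wtilde\sM = \left(\omega_{X^{(\tma 2)}/B}(\rup E)\right)^{m-1}\otimes (f^{(\tma 2)})^*\Lm^{-m}$, checks that $\wtilde\sM^{m e_m}(-\mu^*\Gamma)$ receives a generically surjective map from the pullback of the weakly positive sheaf $f^{\tma 2}_*\omega^{[m(e_m(m-1)-1)]}_{X^{\tma 2}/B}$, verifies the lct bound, and then applies the packaged Viehweg--Zuo weak positivity criterion \cite{Vie-Zuo03a}*{Prop.~3.3} once. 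The output $(f^{(\tma 2)})_*\big(\omega_{X^{(\tma 2)}/B}\otimes\wtilde\sM\big)$ is identified with $\sK^{\nu}_m(f)$ in codimension one by a short computation with $\mu$-exceptional divisors. Your iterated unpacking of this criterion can in principle be made to work, but it forces you to redo the lct and exponent bookkeeping at every stage; more importantly, as written it hinges on a global section that does not exist, so I would encourage you to replace it with the adjoint section of the \autoref{rem:inj-for-stable} inclusion and then match your argument to the hypotheses of \cite{Vie-Zuo03a}*{Prop.~3.3}.
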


\begin{proof}
  Note that as $B$ is regular, it follows that
  $X^{\tma 2}_V=\left(f^{\tma 2}\right)^{-1}V\subseteq X^{\tma 2}$ is regular.  Let
  $\mu: X^{(\tma 2)} \to X^{\tma 2}$ be a resolution which is an isomorphism over $V$
  and let $f^{(\tma 2)}=f^{\tma 2}\circ\mu: X^{(\tma 2)} \to B$ denote the induced
  family. %
  \hide{Because $f^{\tma 2}: X^{\tma 2}\to B$ is stable,
    % and \cite{KollarSingsOfTheMMP}*{6.2},
    % we know that
    the fibers %over $B\setminus V$
    of $f^{\tma 2}$ are Du Bois by \cite{MR2629988}*{Thm.~1.4} and therefore
    $X^{\tma 2}$ has rational % canonical
    singularities by \cite{KS13}*{Thm.~7.1}.
    % there is an effective and exceptional divisor $E\subset \wtilde X^{(\tma 2)}$
    % such
  }%
  Next, let $E$ be an exceptional divisor on $X^{(\tma 2)}$ such that
  $K_{X^{(\tma 2)}} +E \sim_{\bQ} \mu^* K_{X^{\tma 2}}$.

  Recall that by \autoref{prop:inj-for-stable} (cf.~\autoref{lem:Lm-contains}) there
  is an injection
  \begin{equation}\label{SplittingInjection}
    \Lm^{m^2\bdot e_m}   \hooklongrightarrow  f_*^{\tma 2} \omega^{[m]}_{X^{\tma
        2}/B}. 
  \end{equation}
  % where $\tma 2 = m^2\bdot r_m \bdot e_m$, as in \autoref{not:e}.
  Let $\Gamma\subset X^{\tma 2}$ be the effective Cartier divisor corresponding to
  the global section of
  $\omega^{[m]}_{X^{\tma 2}/B} \otimes (f^{\tma 2})^* \Lm^{- m^2\bdot e_m}$ induced
  by the adjoint morphism of \autoref{SplittingInjection}. In particular we have 
  \[
    \sO_{X^{\tma 2}} (\Gamma) \simeq \omega^{[m]}_{X^{\tma 2}/B} \otimes (f^{\tma 2})^*
    \Lm^{- m^2\bdot e_m}.
  \]
  Setting $\wtilde \Gamma:= \mu^* \Gamma$ leads to:
  \[
    \sO_{X^{(\tma 2)}} (\wtilde \Gamma)\simeq \mu^*\omega^{[m]}_{X^{\tma 2}/B}
    \otimes \left( f^{(\tma 2)} \right)^* \Lm^{- m^2\bdot e_m} \simeq
    \omega_{X^{(\tma 2)}/B}^m (mE) \otimes \left( f^{(\tma 2)} \right)^*
    \Lm^{- m^2\bdot e_m} .
  \]
  Next, we define the invertible sheaf $\wtilde \sM$ on $X^{(\tma 2)}$ by
  \[
    \wtilde \sM : = 
    % \mu^*\omega^{[m]}_{X^{\tma 2}/B}
    \left( \omega_{X^{(\tma 2)}/B} (\rup{E}) \right)^{m-1} \otimes \left(
    f^{(\tma 2)} \right)^* \Lm^{-m} .
  \]
  We thus have %
  \hmarginpar{\tiny S: I think that \\
    1) there is no need for the $\wtilde{\ }$ on $X^{\tma 2}$ in the first line and \\
    2) the coefficient of $\rup{E}-E$ at the end was incorrect 
    here. Please check.}%
  \begin{equation}
    \label{StarPull}
    \begin{multlined}
      \wtilde \sM^{m\bdot e_m} (- \wtilde \Gamma) \simeq \omega_{%\wtilde
        X^{(\tma 2)}/B}^{m \left( e_m(m-1) -1 \right)}
      \left(me_m(m-1)\rup{E}-mE\right) \simeq \\
      \simeq \mu^* \omega_{X^{\tma 2}/B}^{ \left[ m \left( e_m(m-1) -1 \right)
        \right] }\left(me_m(m-1)(\rup{E}-E)\right).
    \end{multlined}
  \end{equation}
  Notice that $\rup{E}-E$ is an effective (exceptional) divisor and hence
  \begin{equation}
    \label{eq:2}
    \mu^* \omega_{X^{\tma 2}/B}^{ \left[ m \left( e_m(m-1) -1 \right) \right]}\subseteq
    \wtilde \sM^{m\bdot e_m} (- \wtilde \Gamma).
  \end{equation}
  Recall that $\omega^m_{X^{\tma 2}_v}$ is strongly ample %very ample %globally generated
  for every $v\in V$ by the choice of $a_0$.
  % Therefore, for every multiple $m'$ of $m$, by Kodaira vanishing we have
  % $h^0\left(\omega^{m'}_{X_v^{\tma 2}}\right)=\chi\left(\omega^{m'}_{X_v^{\tma
  %   2}}\right)$, which is constant for $v\in V$. %(\cite{Siu98}),
  \hmarginpar{\tiny S: Since we switched to ``strongyl ample'', the Grauert argument
    is not needed here, so I commented it out.}%
  \hmarginpar{\tiny S: we don't need Siu here... {\bf B: Right! Thanks. But why/how
      are we using Grauert?  with the combination of IPG and GG on fibers we have
      relative GG = surjective of the next natural morphism. ALSO I introduced $m'$
      to cover all multiples of $m$ and not just $m$.} S: Good, thanks! \\ S:
    Grauert's Thm implies that the fibers (=stalks tensored with the residue field)
    of the pushforward are isom to the global sections of the original sheaf
    restricted to the corresponding fiber of $f$. But the restriction of this power
    of the relative dualizing sheaf to a fiber is very ample and hence generated by
    global sections. This implies that the displayed morphism is surjective when we
    restrict it to a fiber and then it is surjective by Nakayama. \bf{B: Yes of
      course. I forgot to make a reference to this theorem. I added one
      now. Unfortunately I tend to refer to this as Cohomology Base Change (in my
      mind this is Cohomology Base Change I).}}%
  Then, it follows from Grauert's theorem \cite[Cor.~12.9]{Ha77} that
  %% $\omega^{[m]}_{X^{\tma 2}/B}$ is generically $f$-generated over $V$, so that
  the natural morphism
  \[
    \left( f^{\tma 2} \right)^* \underbrace{ f^{\tma 2}_* \omega_{X^{\tma 2}/B}^{ \left[
        m \left( e_m(m-1) -1 \right) \right]} }_{:= \sF} \longrightarrow \omega_{X^{\tma
        2}/B}^{ \left[ m \left( e_m(m-1) -1 \right) \right] }
  \]
  is surjective over $V$.  Note that by the
  choice of $\mu$, the $\mu$-exceptional divisor $E$, and hence $\rup{E}-E$ is
  disjoint from $X_V$, which implies that after pulling back by $\mu$ and using
  \autoref{StarPull}, the obtained morphism
  \[
    \xymatrix{%
      \left( f^{(\tma 2)} \right)^* \sF \ar[r] \ar@/^2em/[rr]& \mu^*\omega_{X^{\tma
          2}/B}^{ \left[ m \left( e_m(m-1) -1 \right) \right] } \ \ar@{^(->}[r] &
      \wtilde \sM^{m\bdot e_m} ( - \wtilde \Gamma) , }
  \]
  is surjective onto $\wtilde \sM^{m\bdot e_m} (- \wtilde \Gamma)$ over $V$ and hence
  generically surjective over $B$ (cf.~\autoref{eq:2}).

  On the other hand, for every $v\in V$, we have
  \begin{align*}
    m\bdot e_m \geq   e_m
    & \geq   e( \omega_{X_v}^{[m]} ) \\
    & =   e(\omega_{X_v^{\tma 2}}^{[m]}) \;\;\; \text{by \cite{Viehweg95}*{Cor.~5.21}  }  \\
    & \geq  \left\lfloor \frac{1}{\mathrm{lct}(\Gamma|_{X_v^{\tma 2}})} \right\rfloor +1 =  
      \left\lfloor  \frac{1}{\mathrm{lct}(\wtilde \Gamma|_{\wtilde X_v^{\tma 2}})}
      \right\rfloor +1 . 
  \end{align*}
  Now, as $\sF$ is weakly positive, by vanishing results due to Koll\'ar and
  Kawamata-Viehweg cf.~\cite{Viehweg95}*{\S2.4} and more precisely by
  \cite{Vie-Zuo03a}*{Prop.~3.3}, we find that
  \begin{equation}
    \label{WPSheaf}
    \left( f^{(\tma 2)} \right)_*  \left(   \omega_{X^{(\tma 2)}/B} \otimes
    \wtilde \sM \right) 
  \end{equation}
  is weakly positive.

  \begin{claim}
    \label{claim:inclusion}
    Let $f_U: U\to V$ denote the restriction of $f:X\to B$ to $V$.  There is a
    natural injection
    \[
      \mu_* \left( \omega_{X^{(\tma 2)}/B} \otimes \wtilde \sM \right)
      \hooklongrightarrow \omega^{[m]}_{X^{\tma 2}/B} \otimes \left( f^{\tma 2} \right)^*
      \Lm^{-m} ,
    \]
    that is an isomorphism over $U^{\tma 2}$.
  \end{claim}

  \begin{proof}[Proof of \autoref{claim:inclusion}]
    % Let $\rup{E}=E^+-E^-$, where $E^+$ and $E^-$ are effective integral divisors.
    By the definition of $\wtilde \sM$ we have \hmarginpar{\tiny S: added ( ) to
      $X^{\tma 2}$ in the first line}%
    \begin{equation}
      \begin{multlined}
        \omega_{X^{(\tma 2)}/B} \otimes \wtilde \sM = \omega_{%\wtilde
          X^{(\tma 2)}/B} \otimes
        % \underbrace{
        \left(\omega_{X^{(\tma 2)}/B} (\rup{E}) \right)^{m-1}
        % }_{\text{isom. to $\mu^* \omega^{m-1}_{U^{\tma 2}/V}$ after restriction to
        %   $U^{\tma 2}$}}
        \otimes
        \left( f^{(\tma 2)} \right)^* \Lm^{-m} \simeq  \\
        \simeq \omega_{X^{(\tma 2)}/B}^m\left( (m-1)\rup E\right) \otimes \left(
          f^{(\tma 2)} \right)^* \Lm^{-m}  \simeq \\
        \simeq \mu^* \left( \omega^{[m]}_{X^{\tma 2}/B} \otimes (f^{\tma 2})^*
          \Lm^{-m} \right)\otimes \sO_{X^{(\tma 2)}/B}\left( (m-1)\rup
          E-mE\right)\subseteq \\
        \subseteq \mu^* \left( \omega^{[m]}_{X^{\tma 2}/B} \otimes (f^{\tma 2})^*
          \Lm^{-m} \right)\otimes \sO_{X^{(\tma 2)}/B}\left( m(\rup E-E)\right) ,
      \end{multlined}
    \end{equation}
    from which the required injection follows, because $\rup E-E$ is an effective
    $\mu$-exceptional divisor and hence
    $\mu_*\sO_{X^{(\tma 2)}/B}\left( m(\rup E-E)\right) \simeq \sO_{X^{\tma 2}/B}$,
    cf.~\cite{KMM87}*{Lem.~1-3-2}.
  \end{proof}

  \noin Now, using \autoref{claim:inclusion} and the fact that \autoref{WPSheaf} is
  weakly positive, it follows that so is
  \[
    f^{\tma 2}_* \left( \omega^{[m]}_{X^{\tma 2}/B} \otimes (f^{\tma 2})^* \Lm^{-m}
    \right) \simeq \sK^{\nu}_m(f) .\qedhere
  \]
\end{proof}

%\medskip

In the situation of \autoref{prop:wp}, let $\mathcal H$ be any ample line bundle on
$B$.  Then, for every $m$ as in \autoref{prop:wp}, 
there is a $\beta_m\in \bN$ such that, for every multiple $k$ of $\beta_m$,
\begin{equation}
  \label{cor:prelim}
  \mathrm{Sym}^{k}(\sK^{\nu}_m(f) ) \otimes \mathcal H^{m\bdot k} \; \;   \text{is 
    generically generated by global sections.} 
\end{equation}

\begin{remark}
  \label{rk:wp}
  \hmarginpar{\tiny S: Since this is important later, I think this should be flashed
    out. Saying things like ``one can prove'' is a red flag for referees. \\
    B: I added some more details. This is the easy version of \autoref{prop:wp} and I
    could not think of an efficient way of reproducing the proof. Hopefully with
    these replacements that I have added in the remark it's straightforward to follow
    how each step would be identical but much easier. \\ \color{blue} S: OK, good.}%
  The conclusion of \autoref{prop:wp} also holds for any snc morphism
  $f: (X, \Delta_f) \to (B, D_f)$, with quasi-projective $B$ and canonically
  polarized regular fibers.  More precisely, using \autoref{prop:injection}, taking
  $\sO_{X^{(t_{m,2})}} = \omega^m_{X^{(t_{m,2})}}\otimes (f^{(t_{m,2})})^*\sW_m^{-m^2
    e_m}$ and replacing $\wtilde \sM$ by
  $$
  \sM = \omega^{m-1}_{X^{(t_{m,2})}/B} \otimes \Big( f^{(t_{m,2})} \Big)^*   \sW^{-m}_m  ,  
  $$ 
  from the proof 
  of \autoref{prop:wp} it follows that $\sK^{\nu}_m(f)$ is weakly positive. 
\end{remark}

\subsection{The \reduced case} %semistable case}

\begin{lemma}[\Aug direct image sheaves under semistable reductions]
  \label{lem:SSBaseChange}
  In the setting of \autoref{lem:injection}, assume that $X\setminus \Delta$ is
  regular.  Let $\wtilde X\to X$ be a strong resolution such that
  $\wtilde f: (\wtilde X, \Delta_{\wtilde f}) \to (B,D_{\wtilde f})$ is an snc
  morphism and $g: Y \to B'$ a \reduced morphism as in \autoref{TWO}.  Then, there is
  an injection
 \[
    \sK^{\nu}_m(g) =    g^{ (\tma 2) }_*  \omega^m_{ Y^{(\tma 2)}/ B' }
    \otimes \Lm^{-m} (g)  
    \hooklongrightarrow   \gamma^* \sK_m^{\nu} (\wtilde f)  ,
  \]  
  naturally defined by a generic isomorphism over $B'$.
\end{lemma}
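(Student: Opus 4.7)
The plan is to combine two natural injections, both of which are generic isomorphisms over $B'$: one on the line bundle factor $\Lm$ coming from \autoref{TWO} of \autoref{lem:injection}, and one on the fiber-product direct image factor coming from the Viehweg-type comparison used in the proof of \autoref{prop:injection}. The statement then follows simply by tensoring.

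First, I would apply \autoref{TWO} of \autoref{lem:injection} to obtain the natural injection
\[
\gamma^* \Lm(\wtilde f) \hooklongrightarrow \det g_* \omega^m_{Y/B'},
\]
which is generically an isomorphism over $B'$. Since $g$ is \reduced, its fibers are reduced, so $R_g = 0$, and \autoref{lem:Lm-contains} identifies $\det g_* \omega^m_{Y/B'}$ with $\Lm(g)$. Both sheaves being line bundles on the regular base $B'$, this injection is given by a distinguished section of the effective line bundle $\sO_{B'}(E) \simeq \Lm(g) \otimes \gamma^*\Lm(\wtilde f)^{-1}$. Taking the $m$-th tensor power of the inverse section yields the reverse injection of the dual powers,
\[
\Lm(g)^{-m} \hooklongrightarrow \gamma^* \Lm(\wtilde f)^{-m},
\]
again a generic isomorphism over $B'$.

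Second, I would replicate the step in the proof of \autoref{prop:injection} that invokes \cite{Viehweg83}*{\S 3, p.~336}: after resolving the $\tma 2$-fold fiber products, the natural rational map from $Y^{(\tma 2)}$ to $\wtilde X^{\tma 2}_{B'}$, composed with the projection to $\wtilde X^{(\tma 2)}$, induces an inclusion
\[
g^{(\tma 2)}_* \omega^m_{Y^{(\tma 2)}/B'} \hooklongrightarrow \gamma^* \wtilde f^{(\tma 2)}_* \omega^m_{\wtilde X^{(\tma 2)}/B},
\]
also a generic isomorphism over $B'$. Tensoring the two injections then gives
\[
\sK^{\nu}_m(g) = g^{(\tma 2)}_* \omega^m_{Y^{(\tma 2)}/B'} \otimes \Lm(g)^{-m} \hooklongrightarrow \gamma^* \wtilde f^{(\tma 2)}_* \omega^m_{\wtilde X^{(\tma 2)}/B} \otimes \gamma^* \Lm(\wtilde f)^{-m} = \gamma^* \sK^{\nu}_m(\wtilde f),
\]
as required, and this is a generic isomorphism over $B'$ since each factor is.

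The main subtlety I expect is verifying that these two injections are compatibly defined, that is, that the Viehweg comparison on the fiber products genuinely matches the data used to construct the $\Lm$ injection in \autoref{TWO} of \autoref{lem:injection}. This compatibility is implicit in how the chain of inclusions is built in the proof of \autoref{prop:injection}: the $\Lm$ injection is obtained by composing the determinant embedding with the same resolutions of $Y^{\tma 2} \to \wtilde X^{\tma 2}_{B'} \to \wtilde X^{\tma 2}$ that underlie the Viehweg step. A minor technical point is to ensure that $\wtilde X \setminus \Delta_{\wtilde f}$ is regular, which is inherited from the hypothesis that $X \setminus \Delta$ is regular together with the fact that $\wtilde X \to X$ is a strong resolution, so that the resolution $\wtilde X^{(\tma 2)}$ and its relative dualizing sheaf behave as expected.
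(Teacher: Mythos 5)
Your proof is correct and follows essentially the same route as the paper: invert the injection from \autoref{TWO} (using $R_g=0$ to identify $\det g_*\omega^m_{Y/B'}$ with $\Lm(g)$), apply the Viehweg comparison \cite{Viehweg83}*{\S 3, p.~336} to the resolved $\tma 2$-fold fiber products, and tensor. The paper's version is terser but identical in substance, and the ``compatibility'' concern you raise at the end is not actually an obstruction, since the two injections live on independent tensor factors and no matching condition is needed.
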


\begin{proof}
  By \autoref{TWO} 
  %and \autoref{lem:W-for-snc} 
  there is an injection
   \begin{equation}
    \label{1stInject}
    \Lm(g)^{-1} \hooklongrightarrow \gamma^* \Lm(\wtilde
    f)^{-1}.
  \end{equation}
  For a suitable choice of strong resolutions %desingularizations
  $\wtilde X^{(\tma 2)}$ and $Y^{(\tma 2)}$ there is a commutative diagram
  \[
    \xymatrix{ Y^{(\tma 2)} \ar[rr] \ar[d]_{g^{(\tma 2)}} && \wtilde X^{(\tma 2)}
      \ar[d]^{\wtilde f^{(\tma 2)}}  \\
      B' \ar[rr]^{\gamma} && B . }
  \]
  By \cite[\S 3, p.~336]{Viehweg83}
  \hmarginpar{\tiny S: Can we make this reference more explicit?}%
  we have
  \begin{equation}\label{VieMor}
    g^{(\tma 2)}_*  \omega^m_{Y^{(\tma 2)}/B'}  \subseteq  \gamma^* \wtilde
    f^{(\tma 2)}_* \omega^m_{\wtilde X^{(\tma 2)}/B} . 
  \end{equation}
  This inclusion and \autoref{1stInject}, raised to the power $m$, gives the required
  injection.
\end{proof}

\subsection{Viehweg numbers}
\label{subsect:Vnumbers}

\begin{definition}\label{def:M}
  In the setting of \autoref{def:Augmented}, assuming that $B$ is quasi-projective,
  let $\cH$ be any ample line bundle on $B$.  We define the line bundle $\sM$ as
  follows.
  \begin{enumerate}
  \item\label{item:SG} If $f$ is stable and $\omega_{X^{\tma 2}/B}$ is a line bundle,
    e.g., if $f$ is Gorenstein, %or when
    \hmarginpar{\tiny S: Do we really need Gorenstein, or just that $\omega$ is a
      line
      bundle? \\
      B: just that $\omega$ is a line bundle? \\ \color{blue} S: are you asking or
      saying? If saying, then we should
      replace ``Gorenstein'' with that.  \\
      \color{red}: B: I am SAYING. Feel free to change Gorenstein to relative
      $\omega$ being invertible.}%
    then
    \[
      \sM: = \omega_{X^{\tma 2}/B} \otimes (f^{\tma 2 })^* (\Lm^{-1}\otimes \cH),
    \]
    and
  \item\label{item:SMOOTH} if $X$ is regular, but $f$ is not stable, then 
    \[
      \sM: = \omega_{X^{(\tma 2)}/B} \otimes (f^{(\tma 2)})^* (\Lm^{-1}\otimes \cH),
    \]
  \end{enumerate}
  % we
  % \begin{equation}\label{def:M}
  %   \sM: = \omega_{X^{\{\tma 2\}}/B} \otimes (f^{ \{ \tma 2 \} })^*
  %   (\Lm^{-1}\otimes \cH)
  % \end{equation}
  %
  % Now, for every $\beta\in \bN$, assume that
  % $H^0( X^{ \{ \tma 2 \} } , \sM^{m\bdot \beta} ) \neq 0$.
    %
  \hmarginpar{\tiny S: I am confused by this assumption. If we assume this for
    $\beta=1$, doesn't that imply for all $\beta\in\bN$? OK, I changed it, but the
    original paragraph is here commented out. Please compare. \\
    B: Not sure how this was changed but this is what I meant.}%
  Now, for some $\beta\in \bN$, assume that $H^0(\sM^{m\beta}) \neq 0$. Fix a
  non-zero section $0\neq s_m\in H^0(\sM^{m\bdot \beta})$.  \hmarginpar{\tiny S: what
    is $\beta_m$?  Is this perhaps a typo from copy-pasting from \autoref{cor:KEY}? I
    changed it to
    $\beta$. Please check. \\
    B: There was a typo. There should not be a $\beta_m$ in this subsection except
    for right after \autoref{def:V}}%
  Let $\sigma_{s_m}: Z'_{s_m}\to X^{\tma 2}$, respectively
  $\sigma_{s_m}: Z'_{s_m}\to X^{(\tma 2)}$, be the cyclic covering associated to
  $s_m$ cf.~\cite{Laz04-I}*{Prop.~4.1.6}, for $\sM$ as in \autoref{item:SG} and
  \autoref{item:SMOOTH}.
  % ???? \cite{BG71}.
  \hmarginpar{\tiny S: Do we really need \cite{BG71} here? On the next page we're
    referring to \cite{Laz04-I}*{Prop.~4.1.6} for this, so why not here, too?}%

  Let $\mu: Z_{s_m}\to Z'_{s_m}$ be a resolution
  % desingularization
  with the induced family $g_{s_m}: Z_{s_m}\to B$ and the commutative diagram
  \begin{equation}\label{eq:Cyclic}
    \begin{gathered}
      \xymatrix{ Z_{s_m} \ar[drrrr]_{g_{s_m}} \ar[rr]^{\mu} && Z'_{s_m}
        \ar[rr]^{\sigma_{s_m}}
        &&  X^{\tma 2} \text{ or }  X^{(\tma 2)} \ar[d]^f\hskip-.7em \\
        &&&& B .  }
    \end{gathered}
  \end{equation}
\end{definition}

\begin{definition}
  \label{def:V}
  With a fixed ample line bundle $\mathcal H$ on $B$, and using the notation of
  \autoref{def:M}, we define the \emph{Viehweg
    number} of $f$ associated to the triple $(m, \beta, s_m )$ to be the rank of the
  following local system \hmarginpar{\tiny S: should $\cH$ be also in the index? I
    know it is a lot, but it is already a lot without that. Perhaps say that we fix
    $\cH$? Or do this?\\ And then later we could omit $m$ and $\beta$ from the
    notation... What do you think? (I also removed the parantheses from the index as
    later the $\alpha$
    appears without them...)\\
    B: I added $\mathcal H$ in the first line. For the notation I think you have also
    introduced $\alpha_{s_m}(f)$ just here in this definition. I think it's nice and
    compact. I am happy if you want to use it for the rest of the paper. I did not
    want to make a global change before being sure about your intentions.\\
    \color{blue} S: I had similar thoughts. I added the compact notation, so we could
    change it to that. If you'd like and willing, go ahead and change it
    everywhere. (We can also add a sentence saying something like ``In order to keep
    the notation manageable, we will suppress some of the parameters in the sequel,
    but will remember that $\alpha$ depends on those as well.''}%
  \[
    \alpha_{s_m}(f)=\alpha_{m, \beta, s_m} (f): = \rank \left( \R^{\dim (X^{ \tma
          2 }/B)} (g_{s_m})_* \bC_{(Z_{s_m}\backslash \Delta_{g_{s_m}})} \right) .
  \]
\end{definition}

In the setting of \autoref{def:M} %\autoref{item:SG} and \autoref{item:SMOOTH},
assume in addition that the smooth and Gorenstein fibers are canonically polarized.
Then \autoref{prop:wp}, \autoref{cor:prelim} and \autoref{rk:wp} imply that, after
removing a subset of $B$ of $\codim_B\geq 2$, if necessary, there exist $m$, and
$\beta_m \in \bN$ such that $H^0(\sM^{m\bdot \beta_m})\neq 0$ (on the respective
varieties in the stable and the regular case).  Therefore \autoref{def:V} is relevant
for all such families.

\begin{notation}\label{not:VNumber}
  In order to keep the notation manageable, we will suppress some of the parameters
  in \autoref{def:V} and use the notation $\alpha_{s_m}(f)$ instead of
  $ \alpha_{m, \beta,\cH, s_m} (f)$, but will keep in mind the choices that we have
  made to define $s_m$.
\end{notation}

\begin{remark}
%[Birational invariance and restriction]
  \label{rk:restrict}
  Let $f_{U'}: U'\to V'$ be a subfamily of $f:X\to B$, that is, $ f_{U'}=f|_{U'}$.
  Then, we clearly have (with the same $m$ and $\beta$),
  \[
   \alpha_{s_m} (f) = \alpha_{s'_m} (f_{U'})
  \]
  where $s_m |_{U'} = s'_m$.  \hmarginpar{\tiny S: Do we really need the
    $(\ )_0$ here? \\ Why not $ s_m |_{U'} = s'_m $? \\
    \color{red} B: Done. }
\end{remark}

\begin{proposition}
  \label{prop:IsomCover}
  Let $\sL_1$ and $\sL_2$ be two line bundles on a variety $U$
  % equipped with a sheaf
  and assume that there is an isomorphism $\eta: \sL_1\to \sL_2$. Let
  $0\neq s_{m,2}\in H^0(U, \sL_2^m)$, for some $m\in \bN$, and set
  $s_{m,1}:= \eta^* s_{m,2}\in H^0(U, \sL_1^m)$. Let $\sigma_{s_{m,1}}:Z'_1\to U$ and
  $\sigma_{s_{m,2}}:Z'_2\to U$ be the cyclic coverings associated to $s_{m,1}$ and
  $s_{m,2}$. Then, there is a natural isomorphism $Z'_1 \simeq Z'_2$, induced by
  $\eta$, that commutes with $\sigma_{s_{m,1}}$ and $\sigma_{s_{m,2}}$.
  \end{proposition}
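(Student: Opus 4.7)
The plan is to unwind the explicit construction of the cyclic cover and observe that $\eta$ transports all of the relevant data, so that the isomorphism is built by hand at the level of the defining sheaves of $\sO_U$-algebras.

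First I would recall that, by \cite{Laz04-I}*{Prop.~4.1.6}, the cyclic cover $\sigma_{s_{m,i}}\colon Z'_i\to U$ is realized as $Z'_i=\Spec_U \sA_i$, where the sheaf of $\sO_U$-algebras $\sA_i$ has underlying $\sO_U$-module
\[
\sA_i \;=\; \bigoplus_{j=0}^{m-1} \sL_i^{-j},
\]
and whose multiplication is the obvious one in degrees $j+k<m$, while for $j+k\geq m$ one uses the map $\tilde s_{m,i}\colon \sL_i^{-m}\to \sO_U$ dual to $s_{m,i}\colon \sO_U\to \sL_i^m$ in order to reduce the exponent back below $m$.

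Next, I would assemble the isomorphism $\eta\colon \sL_1\to \sL_2$ into an isomorphism of graded $\sO_U$-modules
\[
\tilde\eta \;=\; \bigoplus_{j=0}^{m-1} \eta^{-j}\colon \sA_1 \longrightarrow \sA_2.
\]
The key (and essentially only non-formal) step is to verify that $\tilde\eta$ is a map of $\sO_U$-algebras. Compatibility of $\tilde\eta$ with multiplication in bidegrees $j+k<m$ is automatic from functoriality of tensor powers, so the whole verification reduces to the commutativity of the square whose horizontal arrows are $\eta^{-m}\colon \sL_1^{-m}\to \sL_2^{-m}$ and the identity $\sO_U\to \sO_U$, and whose vertical arrows are $\tilde s_{m,1}$ and $\tilde s_{m,2}$. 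But this commutativity is precisely the dual of the assumption $s_{m,1}=\eta^{*}s_{m,2}$, after one unwinds the adjunction $\Hom(\sO_U,\sL_i^{m})\cong \Hom(\sL_i^{-m},\sO_U)$.

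Finally, applying $\Spec_U$ to $\tilde\eta$ yields an isomorphism $Z'_1 \simeq Z'_2$ of $U$-schemes, which by construction commutes with $\sigma_{s_{m,1}}$ and $\sigma_{s_{m,2}}$. There is no genuine obstruction here; the only delicate point is to track the sign/duality conventions in the passage $s_{m,i}\leftrightarrow \tilde s_{m,i}$ carefully enough that the hypothesis really produces the commutative square that forces $\tilde\eta$ to be multiplicative.
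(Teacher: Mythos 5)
Your proof is correct, but it takes a genuinely different (though equivalent) route from the paper. You present the cyclic cover as $Z'_i = \Spec_U \sA_i$ with $\sA_i = \bigoplus_{j=0}^{m-1}\sL_i^{-j}$ and verify that $\eta$ induces an isomorphism of $\sO_U$-algebras, which requires chasing the duality $s_{m,i} \leftrightarrow \tilde s_{m,i}$ through the degree-reduction map. The paper instead uses the other standard realization from \cite{Laz04-I}*{Prop.~4.1.6}: it works in the total space $\mathbf L_i = \Spec_{\sO_U}(\Sym \sL_i^*)$ with its tautological section $T_i \in H^0(\mathbf L_i, p_i^*\sL_i)$, realizes $Z'_i$ as the zero locus $(T_i^m - p_i^*s_{m,i})_0$, and observes that the isomorphism $\overline\eta : \mathbf L_2 \to \mathbf L_1$ induced by $\eta$ satisfies $\overline\eta^*(T_1^m - p_1^*s_{m,1}) = T_2^m - p_2^*s_{m,2}$, hence restricts to the desired isomorphism of zero loci. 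The two descriptions of the cyclic cover are well known to coincide, so both proofs are valid; the paper's version avoids the explicit algebra-structure verification because the compatibility of defining equations under $\overline\eta$ is a one-line computation, whereas your version requires the careful duality bookkeeping that you yourself flag as the ``only delicate point.'' Either is acceptable, but if you keep yours, do pin down precisely what $\eta^{-j}$ means (namely $\bigl((\eta^{\otimes j})^\vee\bigr)^{-1}\colon \sL_1^{-j}\to\sL_2^{-j}$) so that the direction of $\tilde\eta\colon \sA_1\to\sA_2$ and the resulting $\Spec_U(\tilde\eta)\colon Z'_2\to Z'_1$ are unambiguous.
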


\begin{proof}
  This directly follows from the construction of $Z'_i$
  cf.~\cite{Laz04-I}*{Prop.~4.1.6}.
  More precisely, let
  \[\mathbf L_i: = \Spec_{\sO_U}(\sym \sL_i^*),\]
  with the natural projection $p_i: \mathbf L_i \to U$, and set
  $T_i\in H^0(\mathbf L_i, p_i^*\sL_i)$ to be the global section associated to the
  tautological map $\sO_{\mathbf L_i} \to p_i^*\sL_i$.  Let \hmarginpar{\tiny S:
    doesn't that map ($\overline\eta$) go the other way? \\
    B:  Since this is an isomorphism it really doesn't matter. Does it?\\
    \color{blue} S: since it says ``induced by $\eta$'', it does. :) \\
    \color{red} B: I see. Is it OK now? \\ \color{green} S: Yes, thanks!}%
  $\overline \eta: \mathbf L_2 \to \mathbf L_1$ denote the natural isomorphism
  induced by $\eta$.  By construction we have $Z'_i= (T_i^m- p_i^*s_i)_0$, and that
  $\overline{\eta}^*(T_1^m - p_1^*s_{m,1}) = T_2^m- p_2^* s_{m,2}$, inducing the
  desired isomorphism.
  % it commutes with $\sigma_{s_{m,1}}$ and $\sigma_{s_{m,2}}$ as we have
  % direct summand injection $\sO_U \into \Sym_{\sO_U} (\sL_2^*)$ and same
  % injection for $\Sym_{\sO_U}(\sL_1^*)$. Now take $\Spec$ and use the natural
  % isomorphism between the two symmetric algebras.
\end{proof}

\noin The next lemma now follows by combining \autoref{prop:IsomCover} with
\autoref{rk:restrict}.

\begin{lemma}
  \label{lem:VNIso}
  Let $f:X \to B$ and $\sM$ be as in \autoref{def:M} and let $V\subseteq B$ an open
  set and $f_U:U=X_V\to V$ the corresponding subfamily.  Consider a line bundle $\sN$
  on $U$ that is equipped with an isomorphism
  $\eta: \sN \overset{\simeq}{\longrightarrow} \sM|_U$.  
  Let $s_{\!\sN}:= \eta^*s_m|_U$, $Z_{\eta}\to U$ a resolution of
  singularities % desingularization
  of the cyclic covering associated to $s_{\!\sN}$, and $g_{\sN}: Z_{\sN}\to V$ the
  induced morphism. Then, % we have
  \[
    \rank\left( \R^{\dim (X^{\tma 2}/B)}(g_{\sN})_* \bC_{Z_{\sN} \backslash
        \Delta_{g_{\sN}} } \right) = \alpha_{s_m}(f) . 
  \]
\end{lemma}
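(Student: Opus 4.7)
The plan is to reduce the statement to a direct combination of \autoref{prop:IsomCover} and \autoref{rk:restrict}, so essentially no new work is needed beyond a careful bookkeeping of the cyclic cover construction under restriction to the open set $V$.

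First I would restrict the setup in \autoref{def:M} to $V$. Writing $U=X_V$, pulling back the fiber product $X^{\tma 2}$ (or its resolution $X^{(\tma 2)}$) along the open immersion $V\hookrightarrow B$ gives $U^{\tma 2}\to V$ (resp.\ $U^{(\tma 2)}\to V$), and the restriction of $s_m$ to this open produces the cyclic cover of $U^{\tma 2}$ (resp.\ $U^{(\tma 2)}$) associated to $s_m|_{U^{\tma 2}}$. By \autoref{rk:restrict} this restriction computes the same Viehweg number as $f$, i.e.
\[
\alpha_{s_m}(f)=\alpha_{s_m|_{U^{\tma 2}}}(f_U).
\]

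Next, I would apply \autoref{prop:IsomCover} to the two line bundles $\sL_1=\sN$ and $\sL_2=\sM|_U$ on $U^{\tma 2}$ (or $U^{(\tma 2)}$), the isomorphism being $\eta$, and the section being $s_{\!\sN}=\eta^*(s_m|_U)$. That proposition produces a natural isomorphism between the cyclic covering $Z'_{\!\sN}\to U^{\tma 2}$ associated to $s_{\!\sN}$ and the cyclic covering $Z'_{s_m|_U}\to U^{\tma 2}$ associated to $s_m|_U$, compatible with the covering maps. Choosing compatible strong resolutions of singularities, this induces a birational map between $Z_{\!\sN}$ and $Z_{s_m|_U}$ which commutes with the maps to $V$.

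The final step is to translate this birational identification of total spaces into the required equality of ranks. Because the rank of $\R^{\dim(X^{\tma 2}/B)}(g_{\sblank})_*\bC_{(\sblank\setminus\Delta)}$ is the generic rank of a local system defined on the complement of the discriminant of $g_{\sblank}$, and a birational map of resolutions is an isomorphism over a Zariski open subset that is contained in this complement for both sides, the local systems have the same rank. Combined with the equality from \autoref{rk:restrict} above, this yields
\[
\rank\!\left(\R^{\dim(X^{\tma 2}/B)}(g_{\sN})_*\bC_{Z_{\sN}\setminus\Delta_{g_{\sN}}}\right)=\alpha_{s_m|_{U^{\tma 2}}}(f_U)=\alpha_{s_m}(f).
\]
The only subtlety, and the step where I would be most careful, is making sure that the resolution $Z_{\!\sN}$ appearing in the hypothesis (chosen independently for $\sN$) gives the same generic rank as the one obtained from \autoref{prop:IsomCover}; this is true because any two resolutions of the same cyclic cover are birational over $U^{\tma 2}$, and the higher direct image local system in question is a birational invariant away from $\Delta_{g_{\sN}}$.
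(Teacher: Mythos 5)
Your proposal is correct and takes essentially the same approach as the paper: the paper's entire proof is the sentence immediately preceding the lemma, which says it ``follows by combining \autoref{prop:IsomCover} with \autoref{rk:restrict},'' and your write-up simply fleshes out that one-line combination. You also correctly flag (and the paper does not) the small extra point that the conclusion must be independent of the particular resolution $Z_{\sN}$ chosen, which holds because the resolutions in play are strong resolutions, so over the locus where the cyclic cover $Z'_{\sN}$ is already regular (which contains a general fiber once the branch divisor is generically smooth on fibers) they all agree, making the generic rank of the local system well defined.
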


\section{Finite-type substacks of the stack of canonically polarized manifolds and
  boundedness of Viehweg numbers}\label{sect:Section4-invariant}

In this section we first recall the main results of \cite{KoL10} and
\cite{Bedulev-Viehweg00} regarding parametrizing spaces for canonically polarized
families. As we have already mentioned in \autoref{rem:volume-instead-of-Hilb-poly}
we will follow the terminology of \cite{ModBook} and use the dimension and the
canonical volume instead of the Hilbert polynomial. The above papers used Hilbert
polynomials, but by \cite{ModBook}*{5.1,6.19} the two approaches are equivalent.

Subsequently, we will use these to establish certain uniform global generation
results for the twisted direct image sheaves $\sK_m^{\nu}(f)$ of canonically polarized
families introduced in \autoref{def:Augmented}. We start by reviewing numerical
bounds arising from Arakelov inequalities over curves.

\hmarginpar{\tiny S: why not $M^\circ$ and $M$ as for $C$ in
  \autoref{def:weak-bound}? \\
  B: Changed now. Please have a look.\\ \color{blue} S: Hm. Did you mean the first
  $M$ to be $M^\circ$ and the last $M^\circ$ to be an $M$? \\
  \color{red} B: Yes. Sorry.\\ \color{green} Do you really only want $\sL$ be ample
  on $M^\circ$? }%
Let $M^\circ$ be a connected and finite type scheme (not necessarily irreducible)
with a \emph{compactification} $M^\circ \subseteq M$ as a subscheme of a projective
scheme $M$, equipped with a fixed line bundle $\sL$, which is ample on
$M^\circ$. Depending on $\sL$, let $b_{\sL}: \bZ^2_{\geq 0}\to \bZ_{\geq 0}$ be a
function in two variables.

\begin{definition}[Weak bound, \cite{KoL10}*{Def.~2.4}]\label{def:weak-bound}
  \hmarginpar{\tiny S: this is not really how we defined it we said that
    $\mu_{C^\circ}$ is \emph{weakly bounded with respect to $\sL$} if such a $b_\sL$
    exists and then that it is\emph{weakly bounded by $b_\sL$}. The phrase
    ``\emph{$b_{\sL}$-weakly bounded}'' sounds awkward to me, but it could be just
    me... \\
    B: I edited this definition. The definition is the same as yours though. If the
    inequality like this holds, then $b_{\sL}$ is assumed to exist by
    default. Writing it this way was my way of shortening your somewhat long set up
    for this definition. \\ \color{blue} S: Yeah, I realized this afterwards. When I
    wrote this comment, I had only seen this much. Reading on I realized that it was
    OK as you wrote, but I forgot to remove my comment... }%
  
  Let $V$ be a regular quasi-projective variety (of arbitrary dimension). Then a
  morphism $\mu_V: V\to M^\circ$ is called \emph{weakly bounded with respect to
    $b_{\sL}$}, if any regular curve $C^\circ$ equipped with a morphism
  $i: C^\circ \to V$ satisfies the following property: Let $C$ be the regular
  compactification of $C^\circ$ of genus $g$ and $d:= \deg (C\setminus
  C^\circ)$. Then the extension $\mu_C: C\to M$ of the naturally induced map
  $\mu_{C^\circ}: C^\circ \to M^\circ$ satisfies the inequality
  \[
    \deg ( \mu_C^*\sL ) \leq b_{\sL} (g,d) .
  \]
\end{definition}

\begin{notation} 
  % Let $h\in \bZ[x]$ a polynomial.
  Let $n,\nu\in\bN$. Recall that $\smn(V)$ denotes the class of smooth, projective
  and canonically polarized families $f_U:U\to V$ of varieties of dimension $n$ and
  of canonical volume $\nu=K_{U_t}^n$. % with Hilbert
  \reversemarginpar
  \hmarginpar{\tiny S: why use the Hilber poly in one definition and the volume in
    the
    other?\\ Also, the $d$ in the subscript should probably be $\nu$, right? \\
    B: OK, this is something maybe you can help with. $h$ fixes $\nu$ but not the
    other way around. On the other hand I have seen that this KSB compactifications
    fix $\nu$ and not $h$.  The key here is the existence of (1) $\lambda_m$ and (2)
    $p_m$ for all sufficiently large and divisible $m$ such that
    $\det f'_*\omega^{[m]}_{X'/B'}$ on stable reductions descend -up to the power
    $p_m$- to $M^s_{\nu}$.  So a few questions/comments: (i) the subscript $m$ can be
    dropped but I just wanted to be clear that $\lambda$ depends on some $m$ and is
    not uniques.(ii) I am not sure if [Kol90] guarantees (1) and (2) since even
    [BV00] say they don't know if these things existed at the time. I mean a
    result/progress on the KSB from is surely needed here for an extra ref. (iii) If
    there is no need to differentiate between $h$ and $\nu$ please feel free to
    replace $h$ by $\nu$. \\ \color{blue} S: You are absolutely right that in general
    $h$ fixes $\nu$ but not the other way around. However, Koll\'ar recently proved
    that for stable families it is enough to fix the volume. This is a rather
    surprising result and it is probably better if we don't rely on it. It is safer
    to use $h$. My comment was mainly so we stay consistent. }%
  % polynomial $h$.  By $M_h$ and $M^s_{\nu}$ we
  This is naturally a subclass of $\stn(V)$, the class of stable families of
  varieties of dimension $n$ and of canonical volume $\nu$. % with Hilbert
  This class defines a \emph{good moduli theory} as defined in \cite{ModBook}*{6.10},
  cf.~\cite{ModBook}*{6.16, Thm.~6.18} and admits a coarse moduli space which is projective
  \cite{ModBook}*{Thm.~8.1}. We will denote this coarse moduli space of
  % canonically polarized manifolds with Hilbert polynomial $h$, and of
  stable $n$-dimensional varieties $X$ with canonical volume $K_X^{n} = \nu$ by
  $M_{n,\nu}$.
  % respectively.
  %The moduli stack of canonically polarized manifolds will be denoted by $\mathcal M_h$.
\end{notation}

\hmarginpar{\tiny \color{red}\bf S: I wonder if we should make this definition for all
  stable families... That would just mean changing $\smn$ to $\stn$...}%
\begin{definition}[Weakly bounded families]
  A family $(f_U: U\to V) \in \smn(V)$ is called \emph{weakly bounded}, if there
  exists an ample line bundle $\sL$ on $M_{n,\nu}$ and a function $b_{\sL}$ with
  respect to which the induced moduli map $\mu_V: V\to M_{n,\nu}$ is weakly
  bounded. %
  \hmarginpar{\tiny S: same comment as above. Also, in this way there has to be a
    (stated!) connection between $h$ and $\nu$!!}%
\end{definition}

By combining \cite{Kollar90}*{Thm.~2.5},~\cite{Viehweg95}*{Thm.~7.17},
\cite[Thm.~5]{Viehweg10} and \cite[Thm.~1.4]{Bedulev-Viehweg00} we have the following
important fact.

\begin{fact}\label{fact:BV}\hmarginpar{\vskip 1em\tiny S: this originally had $\sL_m$,
    but I am guessing that the ``$m$'' is a typo}%
  There exist an ample line bundle $\sL$ on $M_{n,\nu}$ and a function $b_{\sL}$ for
  which every $f_U\in \smn(V)$ is weakly bounded for any smooth quasi-projective
  variety $V$.
\end{fact}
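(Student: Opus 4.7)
The plan is to reduce the general case to the one-dimensional Arakelov inequality by base-changing to a test curve, and then to compare the pullback of the ample line bundle on $M_{n,\nu}$ with the determinant of a direct image sheaf on a stable reduction of the pulled-back family.

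First, I would invoke \cite{Kollar90}*{Thm.~2.5} together with Viehweg's quasi-projectivity theorem \cite{Viehweg95}*{Thm.~7.17} to fix an integer $m\in\bN$ (sufficiently large and divisible), an ample line bundle $\sL:=\lambda_m$ on $M_{n,\nu}$, and a positive integer $p_m$ with the following descent property: for every stable family $f':X'\to B'$ over a normal variety $B'$, with induced moduli morphism $\Phi:B'\to M_{n,\nu}$, the pullback $\Phi^{*}\sL^{\otimes p_m}$ is isomorphic to a fixed tensor power of $\det f'_{*}\omega_{X'/B'}^{[m]}$. Both $\sL$ and $p_m$ depend only on $m$, $n$ and $\nu$, and not on the family $f'$.

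Next, take any $f_U\in\smn(V)$ with moduli morphism $\mu_V:V\to M_{n,\nu}$, and fix a smooth curve $C^{\circ}$ with morphism $i:C^{\circ}\to V$, together with its smooth compactification $C$ of genus $g$ and $d=\deg(C\setminus C^{\circ})$. Pulling back $f_U$ along $i$ yields a family over $C^{\circ}$; after passing to a finite Galois cover $\gamma:C'\to C$ and performing stable reduction (\cite{ModBook}*{Thm.~4.59}), we obtain a stable family $f':X'\to C'$ whose moduli morphism $\Phi:C'\to M_{n,\nu}$ satisfies $\Phi=\mu_C\circ\gamma$. By the descent property,
\[
(\deg\gamma)\cdot\deg(\mu_C^{*}\sL^{\otimes p_m})
=\deg(\Phi^{*}\sL^{\otimes p_m})
=q_m\cdot\deg\bigl(\det f'_{*}\omega_{X'/C'}^{[m]}\bigr),
\]
for some integer $q_m$ depending only on $m$, $n$, $\nu$.

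Finally, I would apply the one-dimensional Arakelov inequality of \cite{Bedulev-Viehweg00}*{Thm.~1.4} (which is valid for families of canonically polarized manifolds of arbitrary dimension over a curve) to the right-hand side, yielding
\[
\deg\bigl(\det f'_{*}\omega_{X'/C'}^{[m]}\bigr)\leq b_{m,n,\nu}\bigl(g(C'),\deg D'\bigr),
\]
where $D'=(\gamma^{*}(C\setminus C^{\circ}))_{\mathrm{red}}$. By Riemann--Hurwitz, $g(C')$ and $\deg D'$ are controlled explicitly by $g$, $d$ and $\deg\gamma$; moreover the Arakelov bound scales at most linearly in $\deg\gamma$, so dividing both sides by $\deg\gamma$ yields a bound on $\deg(\mu_C^{*}\sL^{\otimes p_m})$ depending only on $g$, $d$, $m$, $n$ and $\nu$. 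Dividing further by $p_m$ produces the desired function $b_{\sL}(g,d)$, and hence weak boundedness of $\mu_V$.

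The main obstacle is tracking the behavior under the finite cover $\gamma$: one must ensure that the inevitable dependence on $\deg\gamma$ appearing on both sides of the comparison cancels exactly, so that the final bound does not depend on the stable reduction chosen. This hinges on the multiplicativity of the Arakelov bound under finite base change, which is built into the proof of \cite{Bedulev-Viehweg00}*{Thm.~1.4} and is the technical core of the argument.
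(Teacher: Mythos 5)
The proposal follows essentially the same route the paper indicates: the statement is given in the paper with no written proof, only the phrase ``By combining \cite{Kollar90}*{Thm.~2.5}, \cite{Viehweg95}*{Thm.~7.17}, \cite[Thm.~5]{Viehweg10} and \cite[Thm.~1.4]{Bedulev-Viehweg00},'' and you use exactly Koll\'ar's projectivity/descent of $\lambda_m$, Viehweg's construction, stable reduction, and the Bedulev--Viehweg curve inequality (with \cite{ModBook}*{Thm.~4.59} standing in for \cite[Thm.~5]{Viehweg10} as the source of the stable reduction). The construction of $\sL=\lambda_m$, the composition $\Phi=\mu_C\circ\gamma$, and the final cancellation of $\deg\gamma$ are all as one would reconstruct from those references.

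One wrinkle worth flagging: you apply \cite{Bedulev-Viehweg00}*{Thm.~1.4} directly to the stable family $f':X'\to C'$, writing $\deg(\det f'_*\omega^{[m]}_{X'/C'})\le b_{m,n,\nu}(g(C'),\deg D')$, whereas that theorem is stated for a smooth compactification over the curve. The cleaner way to close this — and the one the paper itself uses in the proof of \autoref{thm:height} — is to use the embedding $\det f'_*\omega^{[m]}_{X'/C'}\into\gamma^*\bigl(\det f_*\omega^m_{X/C}\bigr)$ coming from weak positivity/\cite{Viehweg83}, which lets you apply Bedulev--Viehweg to the smooth $f:X\to C$ over the \emph{original} curve $C$, with discriminant contained in $C\setminus C^\circ$. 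Then $\deg\gamma$ cancels on the nose and no Riemann--Hurwitz bookkeeping is needed. Your version, which bounds everything over $C'$ and then divides by $\deg\gamma$, does go through because the Bedulev--Viehweg bound is linear in $g$ and $\deg D$ and the ramification of $\gamma$ is controlled, but it relies on that linearity in a way the embedding argument does not, and it silently assumes the curve inequality applies to the stable model. I'd recommend making the embedding step explicit so the argument does not depend on the precise shape of the polynomial $b_{m,n,\nu}$.
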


\subsection{Parameterizing spaces of weakly bounded families}
\label{subsect:Par}

\begin{set-up}\label{SETUP}\hmarginpar{\tiny S: changed ``log-smooth'' to ``snc''}%
  Let $V$ be a smooth quasi-projective variety and $(B,D)$ an snc % log-smooh
  compactificaiton, that is $B\setminus D \simeq V$, where $D\subset B$ is a reduced
  divisor with simple normal crossing support.  Let $\cH_B$ be an ample line bundle
  on $B$.
\end{set-up}

According to \cite{KoL10}*{Cor.~2.23, Thm.~1.6, Thm.~1.7} with fixed $n$ (dimension)
and $\nu$ (canonical volume) and a suitable choice of a weak bound $b := b_{\sL}$ as
in \autoref{fact:BV}, there is a reduced, connected finite type scheme $W:= W^b$
% and a substack $\cH_{h}\subset \mathcal M_h$ with the following property.
and a projective family $\sff: \sX \to W\times V=:W_V$ of canonically polarized manifolds 
such that every canonically
polarized family $f_U:U \to V$ of relative dimension $n$ and relative canonical
volume $\nu$
%with Hilbert polynomial $h$
appears in a fiber over
$W$, i.e., for every such $f_U$ there exists a closed point $w\in W$ such that
$\sff_{w}:\sX\resto{\sff^{-1}({\{w\}\times V})}\to \{w\}\times V$ is isomorphic to
$f_U$.
% To lighten up the notations we will denote $W^b$ by $W$.
The irreducible components of $\sX$ and the corresponding families will be denoted by
\[
  \{f_{W_{V,i}}: X_{W_{V, i}} \to W_V\}_{1\leq i \leq k}.
  % \text{where $W_V:= W \times V$.}
\]
Note that $f_{W_V, i}$ is projective and there exists a closed subscheme
$W_i\subseteq W$ such that
\begin{equation}\label{eq:BigFams}
  f_{W_{V,i}} : X_{W_{V,i}} \to W_i\times V 
\end{equation}
is surjective for each $i\in \{1, \ldots, k \}$.

\begin{notation}\label{termin}
  We will write $(f_U: U\to V)\subseteq X_{W_{V,i}}$, if there is a closed point
  $w\in W$ such that $(X_{W_{V,i}})_{\{ w \}\times V} \simeq_V U$.
\end{notation}

\noin By the above we have that
\begin{equation}\label{eq:FP}
  f_U \in \smn(V)  \implies  \exist i, 1\leq i\leq k,  f_U \subseteq
  X_{W_{V,i}}. 
\end{equation}

\begin{remark}[Regularity assumption for $W_i$] %
  Note that one may assume that $W_i$ is smooth.  Indeed, given a resolution
  $\wtilde W_i$ of $W_i$, after replacing $X_{W_V,i}$ by
  \[
    X_{W_{V,i}} \times_{(W_i \times V)} \big( \wtilde W_i \times V \big)
  \]
  and $W_i$ by $\wtilde W_i$, and using the fact that $W_i$ is reduced, we can see
  that the property \autoref{eq:FP} is preserved.
\end{remark}

\begin{notation}
  We will denote $W_i\times V$ by $W_{V,i}$. % and $W_i\times B$ by $W_{B,i}$.
\end{notation}

\medskip

\subsection{Uniform exponents for global generation of twisted direct image
  sheaves}\label{subsect:GGG} 
We will follow the conventions and notation of the previous subsection.

\begin{lemma}\label{lem:gg}
  In the situation of %\autoref{SETUP}
  and \ref{subsect:Par}, for every $1\leq i \leq k$, there exist
  \begin{enumerate}
  \item a quasi-projective subscheme $W_i^\circ \subseteq W_i$ with
    $W_{V,i}^\circ:= W_i^0 \times V$, $X_{W^0_{V, i}}:= (X_{W_{V,i}})_{W^0_{V,i}}$,
    $W_{B,i}: = W_i^\circ \times B$, surjective morphism
    $f_{W_{V,i}^0}: X_{W^0_{V,i}} \to W^0_{V,i}$, and divisor
    $D_{W_{B,i}}:= W_i^\circ \times D$,
  \item \label{item:ample} an ample line bundle $\cH$ on $W_{B,i}$ such that
    $\cH|_{ \{ w_i^0 \} \times B } \simeq \cH_B$, for any
    $w_i^0 \in W_i^0$, and
  \item a positive integer $m_0= m_0(n,\nu)$ such that for each multiple $m$ of
    $m_0$, there are integers $\beta^i_m$, $\wtilde \beta^i_m$ and $a_i$, where
    $\beta^i_m = \wtilde \beta^i_m\bdot a_i$, with the following properties:
  \end{enumerate}
  \begin{enumerate}%[label=(\alph*\/)]
  \item\label{item:gg} $\omega^m_F$ is globally generated, for every fiber $F$ of
    $f_{W^0_{V, i}}$.
  \item\label{item:any} For every $(f_U: U\to V)\subseteq X_{W^0_{V, i}}$ and any
    smooth compactification $f:X\to B$, the torsion free sheaf
    \[
      \bigotimes^{a_i} \big(\sym^{\wtilde \beta^i_m} \sK^{\nu}_m(f) \big) \otimes
      \cH_B^{m \bdot \beta^i_m}
    \]
    is generically generated by global sections.
  \item\label{BigSheaf}
    $\bigotimes^{a_i}\big( \sym^{\wtilde \beta^i_m} \sK^{\nu}_m(f_{W^0_{V,i}}) \big)
    \otimes \cH^{m \bdot \beta^i_m} \otimes \sO_{W^0_{V,i}}$ is generically
     generated by global sections.
  \end{enumerate}
\end{lemma}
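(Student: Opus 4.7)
The plan is to apply the weak positivity of the \aug direct image sheaf to a compactified universal family over $W_i^\circ \times B$, which yields (\ref{BigSheaf}), and then to deduce the per-fiber statement (\ref{item:any}) by base change. Property (\ref{item:gg}) and the construction of $\cH$ follow directly from standard boundedness results.

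First, I would choose $m_0 = m_0(n,\nu)$, using Matsusaka's theorem and \autoref{eq:EVIneq}, so that for every multiple $m$ of $m_0$ and every canonically polarized manifold $F$ of dimension $n$ with $K_F^n = \nu$, the line bundle $\omega^m_F$ is strongly ample and globally generated; this gives (\ref{item:gg}). Next, I would take $W_i^\circ$ to be a dense quasi-projective open of $W_i$ over which one can construct, via weak semistable reduction and resolution of singularities, a projective extension $F_i : \sX_i \to W_{B,i} := W_i^\circ \times B$ of $f_{W^0_{V,i}}$ with $\sX_i$ regular and $F_i$ an snc morphism relative to $(W_{B,i}, D_{W_{B,i}})$. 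For the ample line bundle $\cH$, I would take $\pr_1^* \cH_{W_i^\circ} \otimes \pr_2^* \cH_B$ for a sufficiently ample $\cH_{W_i^\circ}$ on $W_i^\circ$, whose restriction to each slice $\{w\}\times B$ is canonically isomorphic to $\cH_B$.

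Next, I would apply \autoref{prop:wp} together with \autoref{rk:wp} to $F_i$ to conclude that $\sK^{\nu}_m(F_i)$ is weakly positive on $W_{B,i}$ for every multiple $m$ of $a_0(n,\nu)$. By \autoref{cor:prelim} combined with the stability of generic global generation under tensor products, there exist integers $\wtilde{\beta}^i_m$ and $a_i$ such that, setting $\beta^i_m := \wtilde{\beta}^i_m \cdot a_i$,
\[
\bigotimes^{a_i}\bigl(\sym^{\wtilde{\beta}^i_m}\sK^{\nu}_m(F_i)\bigr) \otimes \cH^{m\beta^i_m}
\]
is generically globally generated on $W_{B,i}$. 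Restricting to $W^0_{V,i}$ and identifying $\sK^{\nu}_m(F_i)|_{W^0_{V,i}}$ with $\sK^{\nu}_m(f_{W^0_{V,i}})$ via base change over the open locus where $F_i$ is smooth then yields (\ref{BigSheaf}). The tensor-power factor $a_i$ enters in order to convert the output of weak positivity (whose twist exponent is proportional to the symmetric-power exponent) into the prescribed exponent $m\beta^i_m$ in the statement.

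Finally, for (\ref{item:any}), given any $(f_U:U\to V) \subseteq X_{W^0_{V,i}}$ and a smooth compactification $f:X\to B$, I would choose the corresponding closed point $w \in W_i^\circ$ and restrict $F_i$ over $\{w\}\times B$. The principal obstacle is that distinct members of the family have a priori distinct smooth compactifications, so sections cannot simply be restricted from a fixed universal snc compactification. To get around this, I would pass to a stable reduction of $F_i$ in codimension one of $W_{B,i}$, apply \autoref{prop:BC} to identify the pullback to $\{w\}\times B$ of the $\sK^{\nu}_m$ of this stable reduction with the $\sK^{\nu}_m$ of the corresponding stable reduction of $f$, and then invoke \autoref{lem:SSBaseChange} to compare the latter with $\sK^{\nu}_m(f)$ through a natural injection that is a generic isomorphism over $B$. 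Pulling the global sections produced in (\ref{BigSheaf}) back through this chain then yields the generic global generation asserted in (\ref{item:any}).
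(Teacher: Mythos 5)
Your proposal captures the right high-level strategy (compactify the universal family, apply the weak positivity of $\sK^\nu_m$ via \autoref{prop:wp} and \autoref{cor:prelim}, then use the base-change properties of the twisted direct image together with stable/semistable reduction to reach an arbitrary smooth compactification $f$), but it has a genuine gap in the last step, and it misidentifies what the exponent $a_i$ is for.

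The constant $a_i$ has nothing to do with ``converting the output of weak positivity into the prescribed exponent.'' \autoref{cor:prelim} already produces generic global generation of $\sym^{r}\sK^\nu_m \otimes \cH^{m r}$ with the twist exponent matching the symmetric power, so if a single tensor factor sufficed one could simply take $a_i=1$. What $a_i$ actually encodes is the degree of the Galois cover arising from semistable reduction, and it is needed for a descent argument that your proposal omits. Concretely: after performing semistable/stable reduction of the universal compactified family, the cover $\gamma_{W_{B,i}}\colon W'_{B,i}\to W_{B,i}$ restricts to a Galois cover $\gamma\colon B'\to B$ with group $G$, and the sheaf whose generic global generation you obtain from \autoref{prop:BC}, the identification $\sK^\nu_m(f')\simeq\sK^\nu_m(\wtilde g)$, and \autoref{cor:prelim} lives on $B'$, not on $B$. \autoref{lem:SSBaseChange} only provides an injection $\sK^\nu_m(\wtilde g)\hookrightarrow\gamma^*\sK^\nu_m(\wtilde f)$ of sheaves on $B'$, generically an isomorphism over $B'$ (not a priori over $B$). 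Global sections of a subsheaf of a $\gamma$-pullback do not automatically yield sections on $B$. The paper's proof handles this by saturating the image of that injection to get $\sG'$ on $B'$, forming the $G$-equivariant product $\bigotimes_{g\in G} g^*\sG'$, descending it to a sheaf $\sG$ on $B$ sitting inside $\bigotimes^{|G|}\bigl(\sym^r\sK^\nu_m(\wtilde f)\otimes\cH_B^{mr}\bigr)$ and generically isomorphic to it over $V$, and then using the exactness of $\gamma_*(\sblank)^G$ (in characteristic zero for a finite group $G$) to conclude that $\sG$ is generically globally generated; this is precisely where $a_i:=|\Gal(W'_{B,i}/W_{B,i})|$ enters. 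Your last sentence, ``pulling the global sections \ldots back through this chain,'' glosses over exactly this descent, which is the nontrivial content of part~\ref{item:any}.

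A minor remark in your favor: your choice $\cH=\pr_1^*\cH_{W_i^\circ}\otimes\pr_2^*\cH_B$ is in fact the correct way to guarantee ampleness on $W_{B,i}$ while restricting to $\cH_B$ on each slice $\{w\}\times B$; the paper's displayed choice $\pi_B^*\cH_B$ alone is not ample on $W_{B,i}$, so your version is arguably more careful on that point.
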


\begin{proof}
  Let $n= \dim X/B$. We may assume that $\dim W_i\neq 0$. Let $W_i^0\subseteq W_i$ be
  an open quasi-projective subset and let $\cH\leteq \pi_B^*\cH_B$,
  where $\pi_B:W_{B,i}\to B$ is the projection. This $\cH$ is an ample line
  bundle on $W_{B,i}$ satisfying \autoref{item:ample}.
  
  Let $f_{W_B,i}: X_{W_B,i}\to W_{B,i}$ denote a proper closure of 
  % to be a partial compactification of
  $f_{W^0_{V,i}}$.
  After removing a subset of $W_{B,i}$ of $\codim_{W_{B,i}} \geq 2$, let
  \[
    \gamma_{W_{B,i}}: W'_{B,i} \longrightarrow W_{B,i}
  \]
  be the cyclic, flat morphism arising from a semistable reduction, cf.~\cite{KKMS}
  and \cite{KM98}*{\S 7.17}.  Set $Y_{W_B,i}:= X_{W_B,i}\times_{W_{B,i}} W'_{B,i}$,
  which is irreducible as $\gamma_{W_{B,i}}$ is flat, with
  $g_{W_{B,i}}: Y_{W_B,i} \to W'_{B,i}$ denoting the natural projection. Let
  $D_{W'_{B,i}}:= \gamma_{W_{B,i}}^{-1}D_{W_{B,i}}$.  

  \smallskip

  For any $f_U\subseteq X_{W_V^0,i}$, let $w\in W^0_i$ be a parametrizing closed
  point as in \autoref{termin}.  Let $B'\subset W_{B',i}$ be the subscheme defined by
  $\gamma_{W_{B,i}}^{-1}(\{w\} \times B)$ and $\gamma: B'\to B$ the induced cyclic
  morphism.  For a generic choice of $w\in W_i^0$, $\gamma$ is flat, ramified only
  along a very ample divisor and $\supp D'$, for some $D'\geq D$, and with the same
  ramification indices as those of $\gamma_{W_{B,i}}$.  Denote the pullback of $f_U$
  via $\gamma$ by $f_{U_{B'}}: U_{B'}\to B' \setminus \gamma^{-1}(D)$.

%\begin{claim}
%\label{claim:unipotent}
%$\R^n (f_{U_{B'}})_* \bC_{U_{B'}}$ has unipotent monodromies. 
%\end{claim}

%\noindent
%\emph{Proof of \autoref{claim:unipotent}.} 
%By the construction of $\gamma_{W_{B,i}}$, generically away from the branch locus 
%in the complement of $D_{W_{B,i}}$, there is a regular quasi-projective scheme 
%$Z$ such that $W'_{B,i} \simeq Z\times B'$. 
%Therefore, the local fundamental group of $W'_{B,i}$ is completely determined by that of $B'$. As
%the Gauss-Manin connection for the weight-$n$ VHS of the smooth locus of $g_{W_{B,i}}$ is locally defined by 
%the pullback of the one for $f_U$, via the natural projection map $Z\times B' \to B'$, 
%the claim now follows from the unipotency of monodromies of the flat bundle underlying 
%$\R^n\big(\bC_{Y_{W_B,i}\setminus \Delta_{g_{W_{B,i}}}}  \big)_*$. \qed

Now, let $f'_{W_{B,i}}: X'_{W_{B,i}} \to W'_{B,i}$ be the stable reduction of
$f_{W_{B,i}}$ through $\gamma_{W_B,i}$, resulting in the commutative diagram:
\[
  \xymatrix{ X'_{W_{B,i}} \ar[drrr]_{f'_{W_{B,i}}} &&&
    \ar@{-->}[lll]_{\text{birational}} Y_{W_{B,i}} \ar[rrr] \ar[d]^{g_{W_{B,i}}}
    &&& X_{W_{B,i}}  \ar[d]^{f_{W_{B,i}}}   \\
    &&& W'_{B,i} \ar[rrr]^{\gamma_{W_{B,i}}}_{\text{flat and Galois}} &&& W_{B,i} , }
\]
that is there is a resolution $\pi: Z_{W_{B,i}}\to \widehat Y_{W_{B,i}}$ of the
normalization $\widehat Y_{W_{B,i}}$ of $Y_{W_{B,i}}$ such that
$Z_{W_{B,i}} \to W'_{B,i}$ is semistable in codimension one, and that $X'_{W_{B,i}}$
is the canonical model of $Z_{W_{B,i}}$ over $W'_{B,i}$, cf.~Koll\'ar \cite[\S
3]{Kollar10} and Hacon-Xu \cite{HX13}*{Cor.~1.4}. In particular $f'_{W_{B,i}}$ is
stable in codimension one. With no loss of generality, after possibly removing
another subset of codimension two, we may assume that $Z_{W_{B,i}} \to W'_{B,i}$ is
semistable and $f'_{W_{B,i}}$ is stable over $W'_{B,i}$.  Let $n_0$ be the smallest
integer for which $\omega^{[n_0]}_{X_{W'_{B,i}} / W'_{B,i}}$ is invertible and set
$m_0: = \rm{lcm}(m_0, n_0)$, with $a_0$ as in \autoref{a_0}.  Then \autoref{item:gg}
holds for this choice of $m$, and \autoref{prop:wp} is applicable.

According to \autoref{cor:prelim}, for every multiple $m$ of $m_0$, there is an
integer $\wtilde \beta^i_m$ such that, for every multiple $r$ of $\wtilde \beta^i_m$,
%
%\begin{equation}\label{eq:BigGG}
%  \text{the sheaf }
%  \sym^r \sK_m^{\nu}(f'_{W_{B,i}}) \otimes (\cH')^{m \bdot r}
%  \text{ is globally generated over $W'_{B,i}$,}
%\end{equation}
\begin{enumerate}
\item\label{eq:BigGG} the sheaf
  $\sym^r \sK_m^{\nu}(f'_{W_{B,i}}) \otimes (\cH')^{m \bdot r}$ is globally generated
  over $W'_{B,i}$,
\end{enumerate}
where $\cH':= \gamma^*_{W_{B,i}} \cH$ (recall that we have removed a subset of
$W'_{B,i}$ of $\codim_{W'_{B,i}} \geq 2$).

We next consider the pullback $(X_{W_{B,i}})_{\{w \}\times B}$, which gives a
morphism of reduced pairs $f: (X, \Delta) \to (B,D)$.  Define
$Y:= X \times_{X_{W_{B,i}}} Y_{W_{B,i}}$ with the natural projection $g: Y\to B'$ and
set $X':= (X'_{W_{B,i}})_{B'}$, with the induced family $f':X'\to B'$, which is
stable.  We summarize these constructions in the following commutative diagram.
\begin{equation}\label{eq:BigDiagram}
  \begin{gathered}
    \xymatrix{%
      & X' \ar@{->}@/_6mm/[ddrr]^(0.2){f'} |(.35)\hole |(.60)\hole \ar[dl]
      \ar@{-->}[rr]^{\text{birational}} && Y \ar[dd]^(0.75){g} |(.45)\hole
      |(.50)\hole \ar[rr] \ar[dl] &&
      X \ar[dl]  \ar[dd]^f    \\
      X'_{W_{B,i}} \ar@{->}@/_6mm/[ddrr]^{f'_{W_{B,i}}} \ar@{-->}[rr] &&
      Y_{W_{B,i}}  \ar[rr]  \ar[dd] &&   X_{W_{B,i}} \ar[dd]  \\
      &&& B' \ar[rr]^(0.3){\gamma} |(.44)\hole \ar[dl] && \{w\} \times B
      \ar[dl]  \\
      && W'_{B,i} \ar[rr]^{\gamma_{W_{B,i}}} && W_{B,i} . } 
  \end{gathered}
\end{equation}

Now, we consider the $t^2_m$-fold fiber product of \autoref{eq:BigDiagram} with the
induced stable maps
\[
  \xymatrix{
    (X')^{t^2_m} \ar[rrr]  \ar[d] &&&  (X_{W_{B,i}}')^{t^2_m}   \ar[d]  \\
    X' \ar[rrr]  \ar[d]_{f'} &&&  X_{W_{B,i}}'  \ar[d]^{f'_{W_{B,i}}} \\
    B' \ar[rrr]^i &&& W'_{B,i} , }
\]
where $i: B'\to W'_{B,i} $ denotes the natural inclusion. According to
\autoref{prop:BC}, after removing a subset of $B'$ of $\codim_{B'}\geq 2$, we have
\begin{equation}
\label{claim:pullback}
i^*  \sK^{\nu}_m( f'_{W_{B,i}} )  \simeq \sK^{\nu}_m(f') .
\end{equation}
%
%\begin{comment}
%\begin{claim}
%\label{claim:pullback}
%After removing the subset of $B'$ of $\codim_{B'}\geq 2$, over which $\sK_m^{\nu}(f')$ is not locally free, we have 
%\begin{equation*}
%i^*  \sK^{\nu}_m( f'_{W_{B,i}} )  \simeq \sK^{\nu}_m(f') .
%\end{equation*}
%\end{claim}

%\noindent
%\emph{Proof of \autoref{claim:pullback}.}
%Thanks to the stability of 
%$f'$ and $f'_{W_{B,i}}$, we have 
%\[
%i^* \big[(f'_{W_{B,i}})^{t^2_m}\big]_{*} \omega^{[m]}_{(X'_{W_{B,i}})^{t^2_m}/W'_{B,i}} \simeq \big[(f')^{t^2_m}\big]_* \omega^{[m]}_{(X')^{t^2_m}/B'}.
%\]
%Therefore, it suffices to show that 
%\[
%i^* \Lm(f'_{W_{B,i}})  \simeq \Lm(f').
%\]
%But this similarly follows from the stability of $f'_{W_{B,i}}$.

%\begin{subclaim}
%\label{subclaim:pullback}
%Over the subset of $B'$ over which $f'\omega^{[m]}_{X'/B'}$ is locally free, 
%the torsion free sheaf $f'_{W_{B,i}}\omega^{[m]}_{X'_{W_{B,i}}/W'_{B,i}}$ is also locally free. 
%\end{subclaim}

%\noindent
%\emph{Proof of \autoref{subclaim:pullback}.}
%Again this follows from the stability of $f'_{W_{B,i}}$, implying that 
%\[i^* (f'_{W_{B,i}})_* \omega^{[m]}_{X'_{W_{B,i}}/ W'_{B,i}} \simeq f'_* \omega^{[m]}_{X'/B'},\]  
%together with the assumption on the locally freeness of $f'_*\omega^{[m]}_{X'/B'}$.                  
%\qed

\begin{claim}\label{claim:pullbackGG}
  After replacing $W^0_i$ by an open subset, if necessary, for every multiple $r$ of
  $\wtilde \beta^i_m$, the torsion free sheaf
  $\sym^r \sK^{\nu}_m(f') \otimes \cH_{B'}^{m\bdot r}$ is generically generated 
  by global sections, where $\cH_{B'}:=\gamma^*\cH_B$.
\end{claim}

\begin{proof}[{Proof of \autoref{claim:pullbackGG}}]
  We have already established that
  $\sym^r \sK_m^{\nu}(f'_{W_{B,i}}) \otimes (\cH')^{m \bdot r}$ is globally generated
  over $W'_{B,i}$ cf.~\autoref{eq:BigGG}.  By pulling back and using
  \autoref{claim:pullback} we find that
  \[
    i^* \left( \sym^r \sK^{\nu}_m(f'_{W_{B,i}}) \otimes (\cH')^{m\bdot r} \right)
    \simeq \sym^r \sK^{\nu}_m(f') \otimes \cH_{B'}^{m\bdot r}
  \]
  is generically generated by global sections, for the family $f$ parametrized by a general
  closed point $w\in W^0_i$. 
\end{proof}

From now on we will replace $W_i^0$ by its open subset provided by
\autoref{claim:pullbackGG}.  We note that by construction, for the general family
$f: X\to B$ parametrized by $w\in W_i^0$, and for every codimension one point
$b'\in B'$, $Y_{b'}$ is reduced.  With no loss of generality we will assume that this
holds for every $w\in W_i^0$.  Thus, there is a strong resolution $\wtilde Y \to Y$
of the normalization of $Y$ such that the induced morphism
$\wtilde g: \wtilde Y \to B'$ is semistable \cite{KKMS}.

Following the above construction, there is also a birational map
$X' \dashrightarrow \wtilde Y$.  Let $\mu: \wtilde X\to X'$ denote the birational
morphism obtained by eliminating the indeterminacy of that rational map.  Let
$\sigma: \wtilde X \to \wtilde Y$ be the induced birational morphism, and let
$\wtilde{f'}: \wtilde X\to B'$ denote the resulting family, all fitting in the
following commutative diagram:
\begin{equation}\label{eq:ssDiagram}
  \begin{gathered}
    \xymatrix{
      &  \wtilde X  \ar[dl]_{\mu}  \ar[drr]^{\sigma} \\
      X' \ar@{-->}[rrr] \ar[drrr]_{f'} &&& \wtilde Y \ar[rr] \ar[d]^{\wtilde g} &&
      X  \ar[d]^f  \\
      &&& B' \ar[rr]^{\gamma} && B .  }
  \end{gathered}
\end{equation}
Now, as $X'$ has only canonical singularities 
\cite{KS13},  
for every multiple $m$ of $m_0$, we have
\begin{equation}\label{Refer}
  f'_*\omega^{[m]}_{X'/B'} \simeq  \wtilde{f'}_* \omega^m_{\wtilde X/B'} \simeq \wtilde
  g_* \omega^m_{\wtilde Y/B'}. 
\end{equation}
It follows that
\begin{equation}\label{eq:1stStar}
  \Lm(f') \simeq \Lm(\wtilde g) .
\end{equation}
Moreover, considering the $t^2_m$-fold fiber product of \autoref{eq:ssDiagram} we
find that
\[
\xymatrix{
  &  \wtilde X^{(t^2_m)}  \ar[dl]_{\mu'}  \ar[drr]^{\sigma'} \\
  (X')^{t^2_m} \ar@{-->}[rrr] \ar[drrr]_{(f')^{t^2_m}} &&& \wtilde Y^{(t^2_m)}
  \ar[rr] \ar[d]^{\wtilde g^{(t^2_m)}} &&   X^{(t^2_m)}  \ar[d]^{f^{(t^2_m)}}  \\
  &&& B' \ar[rr]^{\gamma} && B .  }
\]
Again, with $(f')^{t^2_m}: (X')^{t^2_m}\to B'$ being stable, $(X')^{t^2_m}$ has only
canonical singularities and therefore we have
\begin{equation}\label{eq:2ndStar}
  (f')_*^{t^2_m} \omega^{[m]}_{(X')^{t^2_m}/B'}  \simeq (\wtilde{f'})^{t^2_m}
  \omega^m_{\wtilde X^{(t^2_m)}/B'}  
  \simeq \wtilde g_*^{(t^2_m)} \omega^m_{\wtilde Y^{(t^2_m)}/B'}  .
\end{equation}
Combining \autoref{eq:1stStar} and \autoref{eq:2ndStar} now leads to
\[
  \sK_m^{\nu}(f') \simeq \sK_m^{\nu}(\wtilde g) .
\]
On the other hand, according to \autoref{lem:SSBaseChange}, for a log resolution
$(\wtilde X, \wtilde \Delta) \to (X, \Delta)$, we have an injection
\[
  \sK^{\nu}_m(\wtilde g) \hooklongrightarrow \gamma^* \sK_m^{\nu}(\wtilde f) ,
\]
that is an isomorphism over $\gamma^{-1}V$. With
$\cH_{B'}= \gamma^* \cH_B$, this implies that for every multiple $r$ of
$\wtilde \beta^i_m$ we have
\begin{equation}\label{eq:injects}
  \sym^r \sK^{\nu}_m(\wtilde g) \otimes \cH_{B'}^{m\bdot r} \hooklongrightarrow 
  \gamma^*\big(  \sym^r\sK^{\nu}_m(\wtilde f)  \otimes \cH^{m\bdot r}_B   \big)  .
\end{equation}
We saturate the image of the injection \autoref{eq:injects}, call it $\sG'$ and
consider $\bigotimes_{g\in G} g^* \sG'$, where $G= \Gal(B'/B)$, with
$|G| = |\Gal(W'_{B,i} / W_{B,i} )|$. 
By \cite{MR2665168}*{Thm.~4.2.15} and by construction, after removing a subset of $B$
of $\codim_{B}\geq 2$ if necessary, there is a locally free sheaf with an injection
\begin{equation}\label{eq:genisom}
  \jmath: \sG \longinto %\subseteq
  \bigotimes^{|G|}  \big(   \sym^r\sK^{\nu}_m(\wtilde f) \otimes \mathcal
  H_B^{m\bdot r} \big)   
\end{equation}
on $B$ such that
\begin{enumerate}
\item $\gamma^*\sG\simeq \bigotimes_g g^*\sG'$, and
\item $\jmath$ is an isomorphism over $V$.
\end{enumerate}
Now, as the functor $\gamma_*(\sblank)^G$ is exact, and since 
$\sG'$ is globally generated over $\gamma^{-1}V$, we find that
$\sG\simeq \gamma_* \big( \gamma^*\sG \big)^G$ is generically generated by global sections and
thus so is
\[
  \bigotimes^{|G|} \big( \sym^r\sK^{\nu}_m(\wtilde f) \otimes \cH_B^{m\bdot k}
  \big) \simeq \bigotimes^{|G|} \big( \sym^{r} \sK_m^{\nu}(\wtilde f) \big) \otimes
  \cH_B^{m\bdot k\bdot |G|} .
\]
We conclude the proof by setting $a_i: = |\Gal ( W'_{B,i}/ W_{B,i} )|$.
\end{proof}

%%%%%%%%%%%%%%%%%%%%%%%%%%%%%%%%%%%%%%%%%%%%%%%%%%%%%%%%%

\subsection{Generic deformation invariance and an upper-bound for
  $\alpha_{m,\beta_m,s_m}(\sblank)$}

\begin{theorem}
\label{thm:uniform}
In the situation of \autoref{SETUP}, for each $m, \wtilde \beta^i_m$ and $\beta^i_m$
as in \autoref{lem:gg}, for each $i$, there exists an $\alpha^i_{m} \in \bN$ with the
following property.  For every $f_U\subseteq X_{W^0_{V,i}}$ and smooth
compactification $f:X\to B$ there is a section
\[
  0 \neq s_m \in H^0 \left( X^{(t^2_m)} , \big( \omega_{X^{(t^2_m)/B}} \otimes
    (f^{(t^2_m)})^*\big( \Lm^{-1} \otimes \cH_B \big) \big)^{m\bdot \beta^i_m}
  \right) ,
\]
such that
\[
  \alpha_{s_m}(f) \leq \alpha^i_{m} .
\]
\end{theorem}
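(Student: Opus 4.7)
The plan is to construct $s_m$ as (essentially) the restriction to the fiber over $w$ of a single global section on the universal family over $W^0_i$, and then to bound $\alpha_{s_m}(f)$ by finiteness of strata for a constructible sheaf on the noetherian scheme $W^0_i$.

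For the section, I would first apply \autoref{BigSheaf} of \autoref{lem:gg} to the big family $f_{W^0_{V,i}}\colon X_{W^0_{V,i}}\to W^0_{V,i}$, obtaining a nonzero global section $\tau$ of $\bigotimes^{a_i}\bigl(\Sym^{\wtilde\beta^i_m}\sK^{\nu}_m(f_{W^0_{V,i}})\bigr)\otimes \cH^{m\beta^i_m}\otimes \sO_{W^0_{V,i}}$. Given any smooth compactification $f\colon X\to B$ of $(f_U\colon U\to V)\subseteq X_{W^0_{V,i}}$ corresponding to a closed point $w\in W^0_i$, the base-change compatibilities established in the proof of \autoref{lem:gg} (via \autoref{prop:BC} and \autoref{lem:SSBaseChange}), together with \autoref{item:any} of \autoref{lem:gg}, allow one to transport $\tau$ to a compatible nonzero section $\tau_f$ of $\bigotimes^{a_i}\bigl(\Sym^{\wtilde\beta^i_m}\sK^{\nu}_m(f)\bigr)\otimes \cH_B^{m\beta^i_m}$ on $B$. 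The symmetrizer inclusions $\Sym^k(\sblank)\hookrightarrow (\sblank)^{\otimes k}$ (available in characteristic zero), combined with the identification $\sK^{\nu}_m(f)\otimes \cH_B^m \simeq f^{(t^2_m)}_*\sM^m$ and the multiplication map $\bigl(f^{(t^2_m)}_*\sM^m\bigr)^{\otimes \beta^i_m}\to f^{(t^2_m)}_*\sM^{m\beta^i_m}$, then produce the desired $s_m\in H^0(X^{(t^2_m)},\sM^{m\beta^i_m})$ via the adjunction $H^0(B,f^{(t^2_m)}_*\sM^{m\beta^i_m})=H^0(X^{(t^2_m)},\sM^{m\beta^i_m})$.

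To bound $\alpha_{s_m}(f)$ uniformly, I would exploit that each such $s_m$ arises by restriction from the fixed global datum $\tau$. Consequently, the associated cyclic covers $Z'_{s_m}\to X^{(t^2_m)}$ assemble into a single flat family $\sZ'\to X_{W^0_{B,i}}$ over $W^0_i$, and their desingularizations $Z_{s_m}$ fit into a single morphism $\mathcal{G}\colon \sZ\to W^0_{B,i}$ (after a simultaneous resolution of the total space). The local systems $\R^{\dim(X^{(t^2_m)}/B)}(g_{s_m})_*\bC_{Z_{s_m}\setminus \Delta_{g_{s_m}}}$ then appear as fiber restrictions of a single constructible sheaf $\sR$ on $W^0_i$. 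By generic deformation invariance, the rank of $\sR$ is locally constant on the strata of a finite constructible stratification of $W^0_i$, so the integer-valued function $w\mapsto \alpha_{s_m}(f_w)$ takes only finitely many values; setting $\alpha^i_m$ to be the maximum of these values delivers the required uniform bound. Along the way, \autoref{lem:VNIso} is invoked to ensure that this rank depends only on the isomorphism class of $(\sM,s_m)$ restricted to $f$.

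The principal difficulty lies in verifying that the cyclic cover construction genuinely commutes with base change in families, so that $\R^n(g_{s_m})_*\bC$ really does assemble into an honest constructible sheaf on $W^0_i$. This requires tracking the interaction of base change along $\{w\}\times B\hookrightarrow W^0_{B,i}$ with the stable and semistable reductions used in the proof of \autoref{lem:gg}, and controlling the resolutions $\mu\colon Z_{s_m}\to Z'_{s_m}$ uniformly in the family, possibly after shrinking $W^0_i$ to a dense open subset and handling the complement stratum-by-stratum by an analogous inductive argument.
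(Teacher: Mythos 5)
Your overall strategy---a universal section over $W^0_i$, generic deformation invariance of the resulting rank, and Noetherian induction on the bad locus---matches the paper's, but there is a genuine gap at the step where you produce $s_m$ on the compactification. The universal section $\tau$ furnished by \autoref{BigSheaf} of \autoref{lem:gg} lives over $W^0_{V,i}=W^0_i\times V$; restricting it to $\{w\}\times V$ gives a section only over the \emph{open} base $V$, and \autoref{item:any} of \autoref{lem:gg} asserts only generic global generation of the corresponding sheaf on $B$---it supplies sections on $B$, but it does not let you ``transport'' the specific section $\tau|_{\{w\}\times V}$ across the boundary divisor $D$. Yet your entire bounding argument rests on the assertion that ``each such $s_m$ arises by restriction from the fixed global datum $\tau$'': if $\tau_f$ is merely \emph{some} nonzero section on $B$, then the cyclic covers attached to different $w$ are built from unrelated sections and do not assemble into a single family over $W^0_i$, so the constructible sheaf $\sR$ on which your stratification argument relies does not exist. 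This extension-across-$D$ problem is precisely what the paper's proof addresses: it restricts the universal section to $s^0_m$ on $U^{t^2_m}$, notes via \autoref{rk:restrict} that $\alpha_{s_m}(f)$ depends only on the datum over $U$, and then uses the surjectivity in \autoref{eq:surject} (invariance of plurigenera) to convert \autoref{item:any} into global generation of the line bundle \autoref{eq:generate} over $U^{t^2_m}$, which is what yields a section $s_m$ on $X^{(t^2_m)}$ with $s_m|_{U^{t^2_m}}=s^0_m$ as in \autoref{eq:section}; the equality \autoref{eq:Star} then pins $\alpha_{s_m}(f)$ to a single value for all $w$ outside a closed subset $T^0_i$, and induction on $\dim W_i$ handles $T^0_i$.

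A secondary problem in the same step: even over $V$, you propose to obtain $s_m$ by pushing $\tau$ through the symmetrization and multiplication maps $\bigotimes^{a_i}\Sym^{\wtilde\beta^i_m}\big(f^{(t^2_m)}_*\sM^m\big)\to f^{(t^2_m)}_*\big(\sM^{m\beta^i_m}\big)$; these maps are far from injective, so the image of your particular section may vanish, and you would still owe an argument both for nonvanishing and for the transversality of the zero divisor with the general subfamily that the paper uses to define $T^0_i$ and to get deformation invariance for \emph{every} $w\notin T^0_i$, not just an unspecified generic $w$. The constructibility half of your argument is fine in spirit (it repackages \autoref{eq:Star} plus Noetherian induction, modulo Deligne-type generic base change for the non-proper pushforward and uniform control of the resolutions, which you rightly flag), but it only delivers the theorem once the compatibility of $s_m$ with the universal datum over $U$ has actually been established.
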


\begin{proof}%[{Proof of \autoref{thm:uniform}}] %
  According to \autoref{lem:gg} there exists an $m_0(n,\nu)\in \bN$ such that, for
  any multiple $m$ of $m_0$, the two sheaves
  \begin{equation}\label{sheaves}
    \begin{gathered}
      \bigotimes^{a_i} \big[ \sym^{\wtilde \beta^i_m} \big(
      (f^{(t^2_m)}_{{W_{B,i}}})_* \omega^m_{X_{W_{B,i}}^{t^2_m}/W_{B,i}} \otimes
      \Lm^{-m}(f_{W_{B,i}}) \big) \big] \otimes \cH^{\beta^i_m\bdot m}, \text{ and}\\
      \bigotimes^{a_i} \big[ \sym^{\wtilde \beta^i_m} \big( f_*^{(t^2_m)}
      \omega^m_{X^{(t^2_m)}/B} \otimes \Lm^{-m}(f) \big) \big] \otimes
      \cH_B^{\beta^i_m\bdot m}
    \end{gathered}
  \end{equation}
  are generically generated by global sections, for some for $\beta^i_m$,
  $\wtilde \beta_m^i$ and $a_i$ as in \autoref{lem:gg}.

  On the other hand, by the invariance of plurigenera \cite{Siu98}
  \hmarginpar{\tiny S: we should give a reference for this. \\
    B: Done. \\ \color{blue} S: Thanks. }%
  and \autoref{item:gg} the two natural maps
  \begin{equation}\label{eq:surject}
    \begin{gathered}
      (f_{{W^0_V},i}^{t^2_m})^* (f_{{W^0_V},i}^{t^2_m})_*
      \omega^m_{X^{t^2_m}_{W^0_V,i}/W_{V,i}} \longrightarrow
      \omega^m_{X^{t^2_m}_{W^0_{V,i}}/ W^0_{V,i}}, \text{ and} \\ % \;\;, \;\;
      (f^{t^2_m})^* f_*^{t^2_m} \omega^m_{U^{t^2_m}/V} \longrightarrow
      \omega^m_{U^{t^2_m}/V}
    \end{gathered}
  \end{equation} 
  are surjective. Therefore, by pulling back the sheaves in \autoref{sheaves},
  % \autoref{sheaf2}
  and using the surjectivity of the maps in \autoref{eq:surject}, it follows that
  \begin{multline*}
    \bigotimes^{a_i} \sym^{\wtilde \beta^i_m} \Big[ \omega^m_{ X^{(t^2_m)}_{W_{B,i}}
      / W_{B,i} } \otimes (f_{X_{W_B,i}}^{(t^2_m)})^* \Lm^{-m}(f_{{W_{B,i}}}) \Big]
    \otimes (f_{{W_{B,i}}}^{(t^2_m)})^* \cH^{\beta^i_m\bdot m}  \simeq \\
    \simeq \Big[ \omega_{X^{(t^2_m)}_{W_{B,i}}/W_{B,i}} \otimes
    (f_{{W_{B,i}}}^{(t^2_m)})^* \big( \Lm^{-1}(f_{{W_{B,i}}}) \otimes \cH \big)
    \Big]^{\beta^i_m\bdot m},
  \end{multline*}
  and similarly
  \begin{equation}\label{eq:generate}
    \Big[   \omega_{X^{(t^2_m)}/B} \otimes ( f^{(t^2_m)} )^* \big(
    \Lm^{-1}(f)\otimes \cH_B  \big)  \Big]^{\beta^i_m\bdot m} 
  \end{equation}
  are globally generated over the (preimage of the) smooth locus of the family.  
  Next, let
  \begin{equation}\label{BIG}
    0  \neq s_{m, W^0_{V,i}} \in H^0 \left( X_{W^0_{V,i}}^{t^2_m} ,   \Big[
      \omega_{X^{t^2_m}_{W_V,i}/W_{V,i}} \otimes  
      (f_{W^0_{V,i}}^{t^2_m})^* \big( \Lm^{-1}(f_{W^0_{V,i}}) \otimes \cH
      \big)  \Big]^{\beta^i_m\bdot m} \right) 
  \end{equation}
  be such that the intersection of $D_{X_{W^0_{V,i}}} := (s_{m, W^0_{V,i}} )_0$ with
  the general subfamily $f_U \subset X_{W^0_{V,i}}$ is smooth.  Denote the (finite
  type) subscheme of $W^0_{i}$ over which this intersection is not smooth by $T^0_i$.
  Next, for each $f_U$ as above, set
  \begin{equation}\label{SMALL}
    s^0_m  \in H^0 \left(  U^{t^2_m} , \Big[  \omega^{t^2_m}_{U^{t^2_m}/V} 
    \otimes (f_U^{t^2_m})^* \big( \Lm^{-1}(f_U)  \otimes  \cH_B \big)   \Big]  \right)
  \end{equation}
  to be the section induced by the pullback $s_{m, W^0_{V,i}} \big|_{U^{t^2_m}}$.
  Noting that the pullback of the line bundle in \autoref{BIG} is isomorphic to the
  one in \autoref{SMALL}, by the construction of Viehweg numbers \autoref{def:V} and
  \autoref{lem:VNIso}, we find that
  \begin{equation}\label{eq:Star}
    \alpha_{s_{m, W^0_{V, i}}} (f_{{W^0_{V,i}}})  =  \alpha_{s^0_m} (f_U).  
  \end{equation}
  Without loss of generality, we may assume that the sheaf in \autoref{eq:generate}
  is globally generated over $U^{t^2_m}$.  Consequently, there exists an
  \begin{equation}\label{eq:section}
    s_m \in H^0\left( X^{(t^2_m)}, \Big[   \omega_{X^{(t^2_m)}/B} \otimes ( f^{(t^2_m)}
      )^* \big( \Lm^{-1}(f)\otimes \cH_B  \big)  \Big]^{\beta^i_m\bdot m} 
    \right)
  \end{equation}
  such that $s_m|_{U^{t^2_m}} = s^0_m $.  We have that
  $\alpha_{s_m} (f)= \alpha_{s^0_m}(f_U)$ (see
  \autoref{rk:restrict}), so by \autoref{eq:Star}, for every family
  $(f_U: U\to V)\subseteq X_{W^0_{V,i}}\setminus f^{-1}_{W^0_{V,i}}(T_i^0\times V)$,
  regardless of a choice of compactification $f:X\to B$ over $B$, there exists a
  section $s_m$ as in \autoref{eq:section} such that
  \[
    \alpha_{s_m} (f) = \alpha_{s_{m, \beta^i_m, W^0_{V,i}}}
    (f_{{W^0_{V,i}}}).
  \]

  After pulling back $f_{W^0_{V,i}}$ over each irreducible component
  $T^0_{ij}\times V$ of $T^0_i \times V$, and replacing $W_i$ in \autoref{lem:gg} by
  $T^0_{{ij}}$, the conclusions of \autoref{lem:gg} are again valid and we can repeat
  the above argument. As $T_i^0$ is of finite type, we can find an $\alpha^i_m$, by
  induction on the dimension of $W_i$, such that for every
  $f_U\subseteq X_{W^0_{V,i}}$ we have
  \[
    \alpha_{s_m}(f) \leq \alpha_m^i ,
  \]
  with $s_m$ being as in \autoref{eq:section}.
\end{proof}

\begin{corollary}
\label{cor:KEY}
In the situation of \autoref{SETUP} there are integers $m_0$ and $\overline\alpha_m$,
depending only on $n$ and $\nu$, with the following property.  For every multiple $m$
of $m_0$ and every $f_U \in \smn(V)$ and smooth compactification $f: X\to B$ we have
\[
\alpha_{s_m} (f) \leq \overline \alpha_m ,
\]
for some $\beta_m\in \bN$ and $0\neq s_m \in H^0(X^{(t^2_m)}, \sM^{m\bdot \beta_m})$ (as defined in \autoref{def:M}).
\end{corollary}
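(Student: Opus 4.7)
The plan is direct: combine \autoref{thm:uniform} with the finiteness of deformation classes recalled in \S\ref{subsect:Par} via a Noetherian induction on the parameter space. First I would set $m_0$ to be the integer $m_0(n,\nu)$ provided by \autoref{lem:gg}; this depends only on $n$ and $\nu$, and it is the same $m_0$ already used to produce the sections in \autoref{thm:uniform}. Fix from now on a multiple $m$ of $m_0$.

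By the parametrizing construction of \S\ref{subsect:Par}, there are finitely many families $\{f_{W_{V,i}}: X_{W_{V,i}} \to W_i \times V\}_{i=1}^{k}$ such that every $f_U \in \smn(V)$ appears as a fiber of some $f_{W_{V,i}}$, cf.~\autoref{eq:FP}. For each $i$, \autoref{thm:uniform} supplies an open subset $W^0_i \subseteq W_i$, integers $\beta^i_m$, and an integer $\alpha^i_m$ such that for every $f_U\subseteq X_{W^0_{V,i}}$ and every smooth compactification $f:X\to B$ there exists a nonzero section $s_m \in H^0(X^{(t^2_m)}, \sM^{m\bdot \beta^i_m})$ with $\alpha_{s_m}(f) \leq \alpha^i_m$.

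To handle families parametrized by points in the closed complement $W_i\setminus W^0_i$, I would proceed by Noetherian induction on $\dim W_i$. Restricting $f_{W_{V,i}}$ to each irreducible component of $W_i\setminus W^0_i$ yields a new parametrizing family of strictly smaller base dimension, to which \autoref{lem:gg} and then \autoref{thm:uniform} apply again, producing further integers $\alpha^{i,j}_m$ and $\beta^{i,j}_m$. The induction terminates after finitely many steps, and it only produces finitely many strata because each $W_i$ is of finite type. This same inductive device was already used at the very end of the proof of \autoref{thm:uniform}, so no new ingredient is required here.

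Finally, setting $\beta_m:=\text{lcm}_{i,j}\{\beta^{i,j}_m\}$ and $\overline{\alpha}_m := \max_{i,j}\alpha^{i,j}_m$ delivers the desired uniform bound: by \autoref{eq:FP} every $f_U\in\smn(V)$ falls into one of the strata, and replacing each $\beta^{i,j}_m$ by a multiple is harmless because the generic global generation statements in \autoref{lem:gg} and the construction of the section in \autoref{eq:section} remain valid upon passing to any multiple of the original exponent. Since both $k$ and the total number of strata depend only on the fixed invariants $n$ and $\nu$, the integers $m_0$, $\beta_m$ and $\overline{\alpha}_m$ depend only on $n$, $\nu$ (and $m$). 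The only mild subtlety to guard against is verifying that the section $s_m$ can be chosen uniformly across strata; this is what the \textbf{lcm} adjustment of the $\beta^{i,j}_m$ is designed to ensure, and it is the only step that goes beyond a routine bookkeeping of \autoref{thm:uniform}.
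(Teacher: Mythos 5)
Your overall strategy coincides with the paper's: invoke \autoref{thm:uniform} on each parametrizing stratum $W^0_i$, run a Noetherian induction on $\dim W_i$ over the complements $W_i\setminus W_i^0$, and finish by taking the maximum of the resulting $\alpha^{i,j}_m$'s. This is precisely the argument the paper gives, and the bookkeeping you lay out is correct.

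The one place you deviate is the attempted uniformization $\beta_m := \mathrm{lcm}_{i,j}\{\beta^{i,j}_m\}$. This step is unnecessary: in the statement of \autoref{cor:KEY}, $\beta_m$ and $s_m$ are existentially quantified \emph{inside} the universal quantifier over $f_U$ and $f$, so $\beta_m$ is permitted to depend on the family and its compactification, and the paper's proof accordingly keeps the stratum-dependent exponent. Moreover, your supporting assertion that "replacing each $\beta^{i,j}_m$ by a multiple is harmless" is stated without proof and is not obvious: the Viehweg number $\alpha_{s_m}(f)$ from \autoref{def:V} is the rank of a local system attached to the cyclic covering constructed from $s_m$, and that covering has degree $m\beta_m$. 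Passing from $s_m\in H^0(\sM^{m\beta})$ to $s_m^c\in H^0(\sM^{m\beta c})$ changes the degree of the covering, so one would have to argue separately (e.g.\ by analyzing how the degree-$m\beta c$ covering of $s_m^c$ decomposes over $\bC$ into copies of the degree-$m\beta$ covering of $s_m$) that the rank is preserved. Since nothing in the corollary calls for a uniform $\beta_m$, the cleanest fix is simply to drop the lcm step, at which point your proof reduces to the paper's.
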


\begin{proof}
  By \autoref{thm:uniform} we know that for each fixed $i$ and every
  $f_U\in \smn(V)$ with $f_U\subseteq X_{W_{V,i}^0}$ there are
  $\alpha_m^i, \beta^i_m \in \bN$ and $s_m$ such that
\begin{equation}\label{eq:bound}
  \alpha_{s_m} (f) \leq  \alpha_m^i .
\end{equation}

Now, for each irreducible component $T_{ij}$ of the scheme $W_{i}\setminus W_{i}^0$,
we replace $W_{i}$ in \autoref{eq:BigFams} by $T_{ij}$, $1\leq j\leq l$, for some
$l\in \bN$.  Again, by \autoref{lem:gg} and \autoref{thm:uniform} we find that, for
suitable choices of $\beta^i_m$, $s_m$ and $\alpha^i_m$, the inequality
(\ref{eq:bound}) is valid. As $W_i$ is of finite type, by induction on $\dim(W_i)$,
the existence of $\overline \alpha^i_m$ such that
$\alpha_m(f) \leq \overline\alpha^i_m$ holds for every $f_U \subseteq X_{W_V,i}$.  We
conclude the proof by setting
$\overline \alpha_m: = \max_{1\leq i\leq k} \{ \overline\alpha^i_m \}$.
\end{proof}

\section{Higher dimensional Arakelov inequalities}
\label{sect:Section5-APineq}

\subsection{Reflexive systems of Hodge sheaves containing $\Lm(-D)$.} Following
\cite[Def.~2.2]{Taji20}, given sheaves of $\sO_B$-modules $\sW$ and $\sF$, a
\emph{$\sW$-valued system} means a splitting $\sF = \bigoplus \sF_i$ and a sheaf
homomorphism $\tau: \sF \to \sW \otimes \sF$ that is Griffiths-transversal.  If we
assume further that $\sW= \Omega^1_B(\log D)$, $\tau$ is integrable and $\sF$ is
reflexive, then $(\sF, \tau)$ is referred to as a \emph{reflexive logarithmic-system
  of Hodge sheaves}.

\hmarginpar{\tiny S: Should we define ``reflexive systems of Hodge sheaves'', or at
  least give a reference? \\
  B: Done. See the footnote in the next page. \\ \color{blue} S: How about moving
  that footnote her to the beginning as a Definition? \\
  \color{red} B: Done. \\ \color{green} S: Great, thanks!}%

Now, let $V$ be a smooth quasi-projective variety with a smooth compactification
$(B,D)$ and let $\cH_B$ be an ample line bundle on $B$.  Fix $h\in \bZ[x]$. According
to \autoref{thm:uniform} and \autoref{cor:KEY} there are integers $m_0=m_0(n,\nu)$,
$\beta_m$ (suppressing the unnecessary superscript $i$), and $\overline \alpha_m$
such that, for every multiple $m$ of $m_0$, and every smooth compactification
$f:X\to B$ of any smooth projective family $f_U \in \smn(V)$, over an open subset
$B^0\subseteq B$, with $\codim_B(B\backslash B^0) \geq 2$, there exists an
$0 \neq s_m\in H^0( X^{(\tma 2)} , \sM^{m\bdot \beta_m} )$, where (recall from
\autoref{item:SMOOTH})
\begin{equation}\label{eq:TheLine}
  \sM:= \omega_{X^{(\tma 2)/B}} \otimes (f^{(\tma 2)})^*( \Lm^{-1} \otimes \cH_B ) ,
\end{equation}
for which we have $\alpha_{s_m}(f) \leq \overline \alpha_m$.

\begin{proposition}
\label{prop:Hodge}
For every projective morphism $f:X\to B$ as above there is a reflexive system of
Hodge sheaves $\left(\sG= \bigoplus_{i=0}^w \sG_i, \theta\right)$ of weight
$w \in \bN$ on $B$, with logarithmic poles along $D$, satisfying the following properties.
\begin{enumerate}
\item \label{prop:1} $w \leq \dim(X/B) \bdot \tma 2$.
\item  \label{prop:2} $\rank(\sG) \leq \overline\alpha_m$.
\item \label{prop:3} There is an injection
  $\Lm(-D)\otimes \cH_B^{-1} \hooklongrightarrow \sG_0$.
\item \label{prop:4} The torsion free sheaf $\sN_j:= \ker (\theta|_{\sG_j})$ is
  weakly negative, for every $ 0\leq j \leq w$.
\end{enumerate}
\end{proposition}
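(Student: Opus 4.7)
The plan is to convert the section $s_m$ and the bounded local system arising from the cyclic cover $g_{s_m}\colon Z_{s_m}\to B$ of \autoref{eq:Cyclic} into a logarithmic variation of Hodge structure, and then to take its graded Hodge bundle. First, I would apply the cyclic cover construction of \autoref{def:M} to the section $0\neq s_m\in H^0(X^{(\tma 2)},\sM^{m\beta_m})$, yielding $\sigma_{s_m}\colon Z'_{s_m}\to X^{(\tma 2)}$ together with a resolution $\mu\colon Z_{s_m}\to Z'_{s_m}$ and a morphism $g_{s_m}\colon Z_{s_m}\to B$. After passing to an open set of $B$ with complement of codimension at least two (harmless for reflexive extensions), I would arrange, by a further birational modification, that $g_{s_m}$ is an snc morphism over $B$ with discriminant contained in $D$.

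Next, over $V=B\setminus D$ the pushforward $\myR^{w}(g_{s_m})_*\bC_{Z_{s_m}\setminus\Delta_{g_{s_m}}}$ is a polarizable local system of weight $w=\dim(X^{(\tma 2)}/B)=\dim(X/B)\cdot\tma 2$, so $w\leq \dim(X/B)\cdot\tma 2$, giving \autoref{prop:1}. Its rank is exactly $\alpha_{s_m}(f)$, which is at most $\overline\alpha_m$ by \autoref{cor:KEY}; hence the associated graded of the Deligne canonical extension over $(B,D)$ yields a reflexive log-system of Hodge sheaves $(\sG=\bigoplus_{i=0}^{w}\sG_i,\theta)$ whose total rank is at most $\overline\alpha_m$, establishing \autoref{prop:2}. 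The associated Higgs field $\theta$ is nothing but the graded Kodaira--Spencer map, so it is Griffiths-transversal, $\sO_B$-linear, and integrable by construction.

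The hard part will be \autoref{prop:3}, the explicit identification of a subsheaf of $\sG_0$. For this, I would use the standard decomposition for cyclic covers: since $\sM^{m\beta_m}\simeq\sO_{X^{(\tma 2)}}((s_m)_0)$, one has
\[
\sigma_{s_m,*}\omega_{Z'_{s_m}/B}(\log)\ \simeq\ \bigoplus_{i=0}^{m\beta_m-1}\omega_{X^{(\tma 2)}/B}(\log)\otimes \sM^{-i},
\]
and the $i=1$ summand is $\omega_{X^{(\tma 2)}/B}(\log)\otimes \sM^{-1}\simeq (f^{(\tma 2)})^*(\Lm\otimes\cH_B^{-1})(\log\Delta_f-\log)$. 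Pushing this forward by $f^{(\tma 2)}$ and using projection formula yields $\Lm\otimes\cH_B^{-1}$ twisted by the logarithmic correction $-D$ coming from the canonical (as opposed to upper canonical) extension, producing an injection $\Lm(-D)\otimes\cH_B^{-1}\hookrightarrow (g_{s_m})_*\omega_{Z_{s_m}/B}\hookrightarrow \sG_0$ after saturating and reflexifying on $B$.

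Finally, \autoref{prop:4} is obtained by invoking the negativity of kernels of Higgs fields for VHS's over snc log pairs, going back to Zuo, Viehweg--Zuo, and Brunebarbe (see also \cite{BB20}): for each $j$, the sheaf $\sN_j=\ker(\theta|_{\sG_j})$ extends to a sub-VHS-piece and its dual is generically generated by global sections of a system of Hodge bundles, forcing $\sN_j$ to be weakly negative in the sense used here. The main obstacle will be bookkeeping around the codimension-two subset of $B$ removed at the beginning and the $-D$ twist in \autoref{prop:3}, both of which are handled by reflexifying and by carefully choosing the logarithmic extension; these are standard but must be checked to ensure that all arrows go in the correct direction.
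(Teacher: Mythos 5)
Your plan is close in spirit to the paper's proof — both routes run through the cyclic cover $g_{s_m}\colon Z_{s_m}\to B$, the Deligne extension of its VHS, and Zuo-type negativity of Higgs kernels — but you diverge from the paper at a crucial point, and that divergence produces a genuine gap. The paper does \emph{not} take $\sG$ to be the full (extended) Hodge bundle $\sE^0$ of the cyclic cover family. Instead it first forms the auxiliary $\Omega^1_B(\log D)$-valued system $(\sF,\tau)$ with $\sF_i=\myR^if^{(\tma 2)}_*\big(\Omega^{\tma 2-i}_{X^{(\tma 2)}/B}(\log\Delta_{f^{(\tma 2)}})\otimes\sM^{-1}\big)$, then invokes the comparison morphism $\Phi\colon(\sF,\tau)\to(\sE^0,\theta^0)$ of \cite{Taji18}*{pp.~8--9}, and sets $\sG$ to be the reflexive hull of the image of $\Phi$. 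This matters for the ``logarithmic poles along $D$'' clause.

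Concretely, the gap is in your sentence ``I would arrange, by a further birational modification, that $g_{s_m}$ is an snc morphism over $B$ with discriminant contained in $D$.'' The discriminant divisor of $g_{s_m}$ on $B$ is intrinsic to the family over $B$ and cannot be shrunk by birational modifications of the total space $Z_{s_m}$. The branch divisor $(s_m)_0\subset X^{(\tma 2)}$ is \emph{not} required to intersect every fiber over $V$ transversally (in \autoref{thm:uniform} the genericity of $s_m$ is arranged over a general \emph{parameter point} $w$, not over every $v\in V$), so $g_{s_m}$ will typically acquire discriminant components inside $V=B\setminus D$. Consequently the Deligne-extended Hodge bundle of $g_{s_m}$ carries a Higgs field with poles along a divisor $D'\supsetneq D$, and the resulting system would not satisfy the statement as written; worse, the ensuing Arakelov bound in \autoref{MAIN} would involve $K_B+D'$ with an uncontrolled $D'$. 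The system $\sF$ avoids this because it is built from $f$ itself (whose discriminant \emph{is} contained in $D$), and the morphism-of-systems identity $\theta^0\circ\Phi=\Phi\circ\tau$ forces the restricted Higgs field on $\sG=\Phi(\sF)$ to have poles only along $D$, even though $\theta^0$ a priori has poles along $D'$.

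Two smaller issues: (a) your identification of the summand yielding \autoref{prop:3} mixes the decomposition of $\sigma_{s_m,*}\omega_{Z'_{s_m}}$ (which involves \emph{positive} powers $\sM^{i}$) with the Esnault--Viehweg eigenspace description of the Hodge pieces via $\Omega^p(\log)\otimes\sM^{(-i)}$; what is actually used is the latter, and the comparison between the two through the resolution $\mu$ is precisely the nontrivial content packaged in the cited lemma (injectivity of $\Phi_0$), not a direct projection-formula computation. (b) For \autoref{prop:4}, note that if one does take the image $\sG=\Phi(\sF)$ rather than $\sE^0$, weak negativity of $\ker(\theta|_{\sG_j})$ is not automatic from weak negativity of $\ker(\theta^0|_{\sE^0_j})$ (subsheaves of weakly negative sheaves need not be weakly negative), which is another reason the paper defers to the precise structure of $\Phi$ established in \cite{Taji18} rather than simply restricting Zuo's theorem.
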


\hmarginpar{\tiny S: Should we perhaps reference our other paper here? \\
B: Done. \\ \color{blue} S: Thanks.}%

\begin{proof}
  Following \cite{Vie-Zuo03a}, \cite{Taji18}*{2.2} and \cite{Taji20}*{2.2} 
  (see also \cite{KT21}*{\S 4} for a  
  general construction for flat families) we
  consider the $\Omega_{B^0}^1 (\log D)$-valued system
  $(\sF = \bigoplus \sF_i, \tau)$ of weight equal to $\dim(X^{(\tma 2)}/B)$, with
  each $\sF_i$ being defined by
\[
  \myR^i f^{(\tma 2)}_* \left( \Omega^{\tma 2 -i}_{X^{(\tma 2)}/B} (\log
  \Delta_{f^{(\tma 2)}}) \otimes \sM^{-1} \right) .
\]
With $s_m$ as above, over $B_0$, there is a surjective morphism
$Z_{s_m} \to X^{(\tma 2)}$, as in \autoref{eq:Cyclic}. Let $\mathcal V^0$ denote
Deligne's extension \hmarginpar{\tiny S: I think it's either ``the Deligne extension''
  or ``Deligne's extension'', but not ``the Deligne's extension''}%
of the $\bC$-VHS of weight $\dim(Z_{s_m}/B)$ underlying the smooth locus of
$g_{s_m}$, and set $(\sE^0 = \bigoplus \sE_i^0, \theta^0)$ to be the associated Hodge
bundle.  By \cite{Taji18}*{pp.~8--9} there is a morphism of systems
$\Phi: (\sF, \tau) \to (\sE^0, \theta^0)$ such that $\Phi_0$ is injective and its
image $(\sG^0 = \bigoplus_{i=0}^w \sG^0_i, \theta^0)$ has the following property: the
natural extension of $(\sG^0, \theta^0)^{**}$ to the reflexive system of Hodge
sheaves $(\sG, \theta)$ on $B$ satisfies \autoref{prop:4}.  The construction and the
injectivity of $\Phi_0$ implies \autoref{prop:3}.  Moreover, by the construction of
$\sG$ and \autoref{def:V}, we have that $w \leq \dim(X/B) \bdot \tma 2$, so
\autoref{prop:1} follows. Furthermore, $\rank(\sG) \leq \alpha_{m, \beta_m, s_m}(f)$
by construction, so \autoref{prop:2} follows from \autoref{cor:KEY}.
\end{proof}

\subsection{Proof of \autoref{thm:main}}
We are now ready to prove the main theorem.
%+-We will be using \autoref{prop:Hodge}.

\begin{theorem}[the precise version of \autoref{thm:main}]\label{MAIN}
  Let $(B,D)$ be a smooth compactification of a smooth quasi-projective variety $V$
  of dimension $d$ and $H$ an ample Cartier divisor on $B$. Further let $n,\nu\in\bN$
  and assume that $K_B+D$ is pseudo-effective.  Then, there exists an
  $m_0= m_0(n,\nu) \in \bN$, such that for every integer multiple $m$ of $m_0$, there
  exist $a_m, b_m, \gamma_m \in \bN$ with the following property. For each smooth
  compactifcation $f: X\to B$ of an arbitrary $f_U \in \smn(V)$ we have
\begin{equation}\label{eq:Arineq}
  c_1(\det f_*\omega^m_{X/B})  \cdot H^{d-1} \leq 
  \left(  d^{\gamma_m}\bdot a_m (K_B  + D) + b_m D \right) \cdot H^{d-1} +
  H^d  . 
\end{equation}
\end{theorem}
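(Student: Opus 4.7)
The plan is to convert the stated inequality into a slope bound for a line subsheaf and then apply \autoref{prop:Hodge} together with an iteration of the Higgs field. Since $f$ is smooth and $B$ is regular, $R_f=0$ and hence $\sW_m(f) = \det f_*\omega^m_{X/B}$. Setting $\sL := \sW_m(f) \otimes \sO_B(-D) \otimes \cH_B^{-1}$, we have
\[
\deg_H(\sL) = c_1(\det f_*\omega^m_{X/B}) \cdot H^{d-1} - D \cdot H^{d-1} - H^d,
\]
so \autoref{eq:Arineq} is equivalent to an upper bound for $\deg_H(\sL)$ in terms of $(K_B+D)\cdot H^{d-1}$ and lower-order quantities.

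By choosing $m$ to be a multiple of the $m_0(n,\nu)$ provided by \autoref{cor:KEY} and invoking \autoref{prop:Hodge}, we obtain a reflexive logarithmic system of Hodge sheaves $(\sG = \bigoplus_{j=0}^w \sG_j, \theta)$ on $B$ of weight $w \leq n \cdot \tma 2$, total rank at most $\overline\alpha_m$, each kernel $\sN_j := \ker(\theta|_{\sG_j})$ weakly negative, and with an injection $\sL \hookrightarrow \sG_0$. I would then iterate the Higgs field: since $\theta$ is integrable, its $k$-fold iterate factors through the symmetric power, giving $\Theta^k \colon \sG_0 \to \sym^k \Omega^1_B(\log D) \otimes \sG_k$. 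Let $k \in \{0,1,\ldots,w\}$ be minimal such that $\Theta^k(\sL) \subseteq \sym^k \Omega^1_B(\log D) \otimes \sN_k$; this is well-defined because $\sN_w = \sG_w$ (as $\theta|_{\sG_w} = 0$). Minimality produces a generically injective morphism
\[
\sL \hooklongrightarrow \sym^k \Omega^1_B(\log D) \otimes \sN_k^{**}.
\]

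The final step is to turn this inclusion into a numerical inequality. By standard Harder-Narasimhan slope estimates,
\[
\deg_H(\sL) \;\leq\; \mu_{H,\max}(\sym^k \Omega^1_B(\log D)) + \mu_{H,\max}(\sN_k^{**}).
\]
Weak negativity of $\sN_k$ implies $\mu_{H,\max}(\sN_k^{**}) \leq 0$. The pseudo-effectivity of $K_B+D$, via the Campana--P\u{a}un generic nefness theorem, yields $\mu_{H,\min}(\Omega^1_B(\log D)) \geq 0$, a property preserved by $\sym^k$. Consequently,
\[
\mu_{H,\max}(\sym^k \Omega^1_B(\log D)) \leq \rank(\sym^k \Omega^1_B(\log D)) \cdot \mu_H(\sym^k \Omega^1_B(\log D)) = \binom{d+k-1}{d}(K_B+D)\cdot H^{d-1}.
\]
Since $k \leq w \leq n \cdot \tma 2$ and $\binom{d+k-1}{d}$ is polynomial in $d$ of degree $k-1$, there exist constants $a_m, \gamma_m$ depending only on $m, n, \nu$ with $\binom{d+k-1}{d} \leq a_m \cdot d^{\gamma_m}$. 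Substituting into the identity in the first paragraph and setting $b_m := 1$ yields \autoref{eq:Arineq}.

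The main obstacle lies in the last step: the slope estimate $\mu_{H,\max}(\sym^k \Omega^1_B(\log D)) \leq \binom{d+k-1}{d}(K_B+D) \cdot H^{d-1}$ uses the full strength of the Campana--P\u{a}un theorem under the pseudo-effectivity hypothesis on $K_B + D$, and the factor $d^{\gamma_m}$ reflects the rank of $\sym^k \Omega^1_B(\log D)$, i.e., the loss incurred when passing from $\mu_H$ to $\mu_{H,\max}$ in the non-semistable case. Making the definition of ``weakly negative'' sharp enough to control $\mu_{H,\max}(\sN_k^{**})$, and not merely $c_1(\sN_k^{**}) \cdot H^{d-1}$, is another subtlety that the argument requires.
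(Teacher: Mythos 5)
Your overall strategy coincides with the paper's: invoke \autoref{prop:Hodge} to produce a logarithmic system of Hodge sheaves receiving an injection from (a twist of) the determinant line bundle, iterate the Higgs field until the image lands in a weakly negative kernel $\sN_k$, and then use pseudo\-effectivity of $K_B+D$ via Campana--P\u{a}un to bound the degree. Within that common skeleton, your version does two things differently, both of which are genuine refinements. First, you iterate $\theta$ into symmetric powers of $\Omega^1_B(\log D)$ rather than tensor powers (legitimate by integrability), and you replace the paper's ``raise the injection to the $s=\rank\sN_k$ power and extract $\det\sN_k$'' device by a direct $\mu_{H,\max}$ estimate. This yields $\binom{d+k-1}{d}(K_B+D)\cdot H^{d-1}$ in place of $d^{ks-1}\cdot k(K_B+D)\cdot H^{d-1}$, which is sharper (note that, as $s$ has to be bounded by $\overline\alpha_m$, the paper's constant $d^{\gamma_m}$ with $\gamma_m=\overline\alpha_m a_m-1$ is much worse for moderate $d$). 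Second, choosing $k$ minimal so that $\Theta^k(\sL)\subseteq\sym^k\Omega^1_B(\log D)\otimes\sN_k$ absorbs the paper's two-case split ($c_1(\sA_m)\cdot H^{d-1}\lessgtr0$) and the separate argument that $\theta(\sA_m)\neq0$; if $k=0$ you get $\sL\hookrightarrow\sN_0$ and the degree bound is immediate from weak negativity. Your flagged subtleties are real but resolvable: weak negativity in the Viehweg sense does give $\mu_{H,\max}(\sN_k^{**})\leq0$, since any torsion-free quotient of a weakly positive sheaf is weakly positive and hence has non-negative slope, so $\mu_{H,\min}(\sN_k^*)\geq 0$.

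There is, however, a concrete error at the very start: the claim ``Since $f$ is smooth and $B$ is regular, $R_f=0$''. In \autoref{def:ARAK}, a smooth compactification $f:X\to B$ of $f_U$ means that $X$ is a smooth \emph{variety}; the morphism $f$ is proper but need not be smooth, and its fibers over $D$ can be non-reduced, so $R_f$ need not vanish. Consequently $\Lm(f)=\det\bigl(f_*\omega^m_{X/B}(-m R_f)\bigr)$ is not $\det f_*\omega^m_{X/B}$ in general, and the line bundle that actually injects into $\sG_0$ by \autoref{prop:Hodge} is $\Lm(f)(-D)\otimes\cH_B^{-1}$ (or, in the form used by the paper's proof, $\det\bigl(f_*\omega^m_{X/B}(-mD)\bigr)(-D)\otimes\cH_B^{-1}$), \emph{not} your larger $\sL=\det f_*\omega^m_{X/B}(-D)\otimes\cH_B^{-1}$. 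The twist by $-mD$ inside the determinant costs $m r_m\,D\cdot H^{d-1}$ when you unwind, which is precisely why the paper obtains $b_m=1+m r_m$ rather than your $b_m=1$. As written, your argument bounds the degree of a strictly smaller line bundle than the one you claim, so the inequality \autoref{eq:Arineq} with $b_m=1$ does not follow from the injection you actually have. The fix is immediate (replace $\sL$ by $\Lm(f)(-D)\otimes\cH_B^{-1}$ and set $b_m=1+m r_m$), and does not endanger the theorem since $b_m$ is allowed to depend on $m$, $n$, $\nu$; but you should not assert $R_f=0$.
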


\begin{proof}
  Let $\Lm = \det \left( ( f_* \omega^m_{X/B} ) (-m\bdot D) \right)$
  (cf.~\autoref{not:L}). Further let $\sA_m: = \Lm(-D -H)$. As before, set
  $r_m:= \rank(f_* \omega^m_{X/B})$.  We will distinguish cases based on the sign of
  $c_1(\sA_m) \cdot H^{d-1}$.
  
  First, assume that $c_1(\sA_m) \cdot H^{d-1} \leq 0$.
  Then
  \[
    c_1\left( \det (f_*\omega^m_{X/B}) \left( - ( m\bdot r_m) D - H \right) \right)
    \cdot H^{d-1} \leq D \cdot H^{d-1},
  \]
  which implies that
  \begin{equation}\label{Inequality1}
    c_1(\det f_*\omega^m_{X/B}) \cdot H^{d-1}  \leq  (1+ m\bdot r_m)  D \cdot
    H^{d-1}  +  H^{d} . 
  \end{equation}
  
  Next, assume that
  \begin{equation}\label{eq:assump}
    c_1(\sA_m)\cdot H^{d-1} >0.
  \end{equation} 
  According to \autoref{prop:Hodge}, there exists a reflexive system of Hodge sheaves
  $(\sG =\bigoplus_{i=1}^w \sG_i,\theta)$ such that
  $\sA_m \hooklongrightarrow \sG_0$.  Now, define
  \[
    \theta^l : = \underbrace{ (\id \otimes \theta ) \circ \ldots \circ (\id \otimes
      \theta)}_{ \text{ $(l-1)$-times } } \circ \theta : \sG_0 \longrightarrow \left(
    \Omega^1_B (\log D) \right)^{\otimes l} \otimes \sG_l .
  \]
  \begin{claim}\label{claim:DB}
    $\theta(\sA_m) \neq 0$.
  \end{claim}

  \begin{proof}[{Proof of \autoref{claim:DB}}]
  The proof is the same as in \cite{KT21}*{Claim~5.5}.
  \end{proof}

  \hmarginpar{\tiny S: I don't think this follows (that is needs)
    \autoref{claim:DB}. Isn't this true for any $l>w$?  {\bf B: I don't think it can
      be avoided. I am claiming that $\theta = \theta^1$ doesn't kill $\sA_m$.  Of
      course some $\theta^l$ will kill this but the claim is it CANNOT be when
      $l=1$. } \\ S: Uh, I get it. You mean that it follows from the claim that $l$
    has to be larger than $1$. I don't think this is clear from the way you wrote it,
    so I rewrote it. Please check. } %
  Clearly, $\theta^l ( \sA_m ) =0$ for $l\gg 0$ and \autoref{claim:DB} implies that
  such an $l$ has to be larger than $1$. Let $k$ be the largest integer for which
  $\theta^k(\sA_m ) \neq 0$. In particular, then $\theta^{k+1}(\sA_m ) =0$, and hence
  % Using the fact that
  %
  \[
    \theta^k \in \Gamma \left( (\Omega^1_B(\log D))^{\otimes k} \otimes \sHom(\sG_0 ,
      \sN_k) \right),
  \]
  where $\sN_k:= \ker (\theta|_{\sG_k})$ as in \autoref{prop:4}.  As $\sA_m$ is of
  rank one, the nontrivial map
  \[
    \sA_m \longrightarrow \left( \Omega^1_B(\log D) \right)^{\otimes k} \otimes \sN_k
  \]
  must be an injection.  \hmarginpar{\tiny S: Is this what you meant to write here?
    (that it is an injection)? \\
    B: Yes.}%
  \hmarginpar{\vskip-7em\tiny S: why do we need these to
    conclude that this is an injection? \\
    B: We are only using \autoref{claim:DB} to ensure the first time we apply
    $\theta$ to $\sA_m$ the image is not zero. \\ \color{blue} S: I have no idea what
    I had in mind when I wrote this... }%
  We may assume with no loss of generality that the image of this injection is
  saturated.  After raising it to the power $s:= \rank(\sN_k)$, we find that
   \[
     \sA_m^s \otimes (\det \sN_k)^{-1} \hooklongrightarrow \left(\Omega^1_B(\log D)
     \right)^{\otimes k\bdot s} .
   \]
   \hmarginpar{\vskip-4em\tiny S: ``implying'' what? Do we really need anything for
     the injection other than it is not trivial and the left hand side is a line
     bundle? \\
     B: You are right. We don't need anything for this second injection; we are only
     doing the following: assuming that the initial injection is saturated we find
     that it remains saturated after being raised to power $s$. Since the final map
     will be between locally frees we may argue in codimension one. First we dualize
     to get the surjection:
     $ \Big( \Omega^1_B(\log D)^{\otimes ks } \Big)^*\otimes (\det \sN_k)^{-1} \to
     \sA_m^{-s}.$ We get the desired injection by tensoring both sides by
     $\det (\sN_k)$ and the dualizing again.  I am not sure if this needs an
     explanation, although I am embarrassed that my first explanation was certainly
     terrible.  I cleaned up the explanation. Please check. \\ \color{blue} S: OK,
     thanks!}%
   i.e., by using the definition of $\sA_m$, we have
   \begin{equation}\label{eq:Injects}
    \left(  \Lm(-D- H) \right)^{s}  \otimes (\det \sN_k)^{-1}
    \hooklongrightarrow  
    \left( \Omega^1_B(\log D) \right)^{\otimes k\bdot s}  .
  \end{equation}
  
  Now, as $(\det \sN_k)^{-1}$ is pseudo-effective by \autoref{prop:4}, we have the inequality
  \[
    s\bdot c_1 \left( \Lm(-D- H) \right) \cdot H^{d-1} %
    \leq c_1\left( \left( \Lm(-D- H) \right)^s \otimes (\det \sN_k)^{-1} \right)
    \cdot H^{d-1} .
  \]
  On the other hand, by \cite[Thm.~1.3]{CP16} it follows from \autoref{eq:Injects} that
  \[
    c_1\left( \left( \Lm(-D- H) \right)^s \otimes (\det \sN_k)^{-1} \right) \cdot
    H^{d-1} \leq c_1 \left( \left(\Omega^1_B(\log D) \right)^{\otimes k\bdot s}
    \right) \cdot H^{d-1} .
  \]
  By combining these latter two inequalities we find that
  % \[
  %   s\cdot c_1 \left( \Lm(-D- H) \right) \cdot H^{d-1} %
  %   \leq d^{k\bdot s-1} \bdot (ks) \bdot (K_B+D) \cdot H^{d-1},
  % \]
  % that is
  %
  \hmarginpar{\tiny S: I am probably missing something trivial, but where does the
    $-1$ in the exponent of $d$ come from?  \bf{B: This is from the equality :
      $c_1(V^{\otimes t}) = t r^{t-1} c_1(V) $, where $r$ is the rank of $V$.  To get
      this, I use the multiplicavity of $ch(\sblank)$, i.e.
      $ch(V\otimes W) = ch(V) \cdot ch(W)$, to deduce
      $c_1(V\otimes V') = r v_1(V') + c_1(V) r' $.  }} %
  \[
    c_1 \left( \Lm(-D- H) \right) \cdot H^{d-1} %
    \leq d^{k\bdot s -1} \bdot k \bdot (K_B+D) \cdot H^{d-1} .
  \]
  \reversemarginpar Now, substituting the definition
  $\Lm = \det \left( (f_*\omega^m_{X/B}) (-m D) \right)$, this implies that
  \begin{equation}\label{semifinal}
    c_1\left( \det f_*\omega^m_{X/B}  \left(  (-m \bdot r_m -1) D \right)\right) \cdot 
    H^{d-1}  \leq  
    d^{k\bdot s-1} \bdot k \bdot (K_B+D) \cdot H^{d-1} + H^{d}  . 
  \end{equation}
  Furthermore, by \autoref{prop:Hodge} we have that $k \leq w \leq a_m$, where
  $a_m:= \tma 2 \cdot \dim X/B$, and $s\leq \overline \alpha_m$.  Therefore, from
  \autoref{semifinal} we find
  \begin{equation}\label{Inequality2}
    c_1(\det f_*\omega^m_{X/B})  \cdot H^{d-1} \leq  d^{\overline \alpha_m
      \bdot a_m  -1} \bdot  a_m \bdot  
    (K_B+D) \cdot H^{d-1}  + (1+m r_m) \bdot D \cdot H^{d-1}
    + H^{d} . 
  \end{equation}
  The statement now follows from combining \autoref{Inequality1} and
  \autoref{Inequality2}, with $\gamma_m = \overline \alpha_m\bdot a_m -1$,

  \begin{equation}\label{eq:explicit}
    a_m =  (r_m \bdot m^2 \bdot e_m)\bdot\dim X/B     \;\;\; \text{and} \; \;\;
    b_m =   1 + m \bdot r_m.   \qedhere
  \end{equation}
\end{proof}

\subsection{Proof of \autoref{thm:height}}
After removing a subset of $B$ of $\codim_B\geq 2$, let $f': X'\to B'$ be the stable
reduction associated to $\gamma: B'\to B$, as in \autoref{eq:ssDiagram}.  By
\autoref{Refer} and \autoref{VieMor} there is an embedding
$f'_* \omega^{[m]}_{X'/B'} \hooklongrightarrow \gamma^* f_* \omega^m_{X/B}$, such
that \hmarginpar{\tiny{B: Is [KSB] the right reference here? [BV00] are very vague
    about this reference.}}
\[
  \hskip-10em\underbrace{\big[\det f'_* \omega^{[m]}_{X'/B'}
    \big]^{p_m}\hskip-1em}_{\hskip12em\simeq \Phi^*(\lambda_m)\text{
      cf.~\cite{Kollar90}*{2.5},\cite{Viehweg95}*{Thm.~7.17}}}
  \hskip-9em\hooklongrightarrow \gamma^* ( \det f_* \omega^m_{X/B} )^{p_m} .
\]
By Teissier's inequality \cite{Laz04-I}*{Thm.~1.6.1} (and references therein) and
\autoref{thm:main} it follows that
\[
  \left( \vol( \Phi^*\lambda_m) \right)^{\frac{1}{d}} \bdot \Big[ \left( \gamma^*
    H^d \right)^{\frac{1}{d}} \Big]^{d-1} \leq p_m \bdot \deg \gamma \bdot
  \left( \left(d^{\gamma_m} a_m(K_B+D) + b_m D\right) \cdot H^{d-1} +
    H^d \right) ,
\]
which implies that
\[
  \left(\vol( \Phi^*\lambda_m) \right)^{\frac{1}{d}} \leq \frac{p_m \bdot
    \deg\gamma}{ (H^d)^{1-\frac{1}{d}} } \bdot \left( \left(d^{\gamma_m} a_m(K_B+D) +
      b_m D\right) \cdot H^{d-1} + H^d \right).
\]
Finally, setting
\[
  c_m(x_1, x_2, x_3)= \frac{p_m^d}{x_3^{d-1}} \left( d^{\gamma_m} \bdot a_m x_1 + b_m
    x_2 + x_3 \right)^d 
\]
completes the proof. \qed

%\bibliographystyle{bibliography/skalpha} %$ 
%\bibliography{bibliography/general}

\begin{bibdiv}
\begin{biblist}

\bib{Abramovich-Karu00}{article}{
      author={Abramovich, Dan},
      author={Karu, Kalle},
       title={Weak semistable reduction in characteristic 0},
        date={2000},
        ISSN={0020-9910},
     journal={Invent. Math.},
      volume={139},
      number={2},
       pages={241\ndash 273},
      review={\MR{1738451 (2001f:14021)}},
}

\bib{MR4098246}{article}{
   author={Adiprasito, Karim},
   author={Liu, Gaku},
   author={Temkin, Michael},
   title={Semistable reduction in characteristic 0},
   journal={S\'{e}m. Lothar. Combin.},
   volume={82B},
   date={2020},
   pages={Art. 25, 10},
   review={\MR{4098246}},
}

\bib{Arakelov71}{article}{
      author={Arakelov, Sergei~J.},
       title={Families of algebraic curves with fixed degeneracies},
        date={1971},
        ISSN={0373-2436},
     journal={Izv. Akad. Nauk SSSR Ser. Mat.},
      volume={35},
       pages={1269\ndash 1293},
      review={\MR{MR0321933 (48 \#298)}},
}

\bib{BB20}{article}{
      author={Brotbek, Damien},
      author={Brunebarbe, Yohan},
       title={Arakelov-{N}evanlinna inequalities for variations of {H}odge
  structures and applications},
        date={2020},
        note={Preprint
  \href{https://arxiv.org/abs/2007.12957}{arXiv:2007.12957}},
}

\bib{BG71}{article}{
      author={Bloch, Spencer},
      author={Gieseker, David},
       title={The positivity of the Chern classes of an ample vector bundle},
        date={1971},
     journal={Invent. Math.},
      volume={12},
      number={2},
       pages={112\ndash 117},
         url={https://doi.org/10.1007/BF01404655},
}

\bib{BGG06}{article}{
      author={Bradlow, S.B.},
      author={Garc{\'i}a-Prada, O.},
      author={Gothen, P.B.},
       title={Maximal surface group representations in isometry groups of
  classical hermitian symmetric spaces},
        date={2006},
     journal={Geom. Dedicata},
      volume={122},
       pages={185\ndash 213},
}

\bib{MR3143705}{article}{
      author={Bhatt, Bhargav},
      author={Ho, Wei},
      author={Patakfalvi, {\relax Zs}olt},
      author={Schnell, Christian},
       title={Moduli of products of stable varieties},
        date={2013},
        ISSN={0010-437X},
     journal={Compos. Math.},
      volume={149},
      number={12},
       pages={2036\ndash 2070},
         url={http://dx.doi.org/10.1112/S0010437X13007288},
      review={\MR{3143705}},
}

\bib{Bedulev-Viehweg00}{article}{
      author={Bedulev, Egor},
      author={Viehweg, Eckart},
       title={On the {S}hafarevich conjecture for surfaces of general type over
  function fields},
        date={2000},
        ISSN={0020-9910},
     journal={Invent. Math.},
      volume={139},
      number={3},
       pages={603\ndash 615},
      review={\MR{MR1738062 (2001f:14065)}},
}

\bib{Conrad00}{book}{
      author={Conrad, Brian},
       title={Grothendieck duality and base change},
      series={Lecture Notes in Mathematics},
   publisher={Springer-Verlag},
     address={Berlin},
        date={2000},
      volume={1750},
        ISBN={3-540-41134-8},
      review={\MR{1804902}},
}

\bib{CP16}{article}{
      author={Campana, Fr{\'e}d{\'e}ric},
      author={P{\u a}un, Mihai},
       title={Foliations with positive slopes and birational stability of
  orbifold cotangent bundles},
        date={2019},
     journal={Inst. Hautes {\'E}tudes Sci. Publ. Math.},
      volume={129},
      number={1},
       pages={1\ndash 49},
        note={\href{https://arxiv.org/abs/1508.02456}{arXiv:1508.02456}},
}

\bib{Deligne87}{book}{
      author={Deligne, Pierre},
       title={Un th{\'e}or{\`e}me de finitude la monodromie},
   publisher={Progr. Math.},
        date={1987},
        note={Discrete groups in geometry and analysis (New Haven, Conn.,
  1984)},
}

\bib{EV92}{book}{
      author={Esnault, H{\'e}l{\`e}ne},
      author={Viehweg, Eckart},
       title={Lectures on vanishing theorems},
      series={DMV Seminar},
   publisher={Birkh{\"a}user Verlag},
     address={Basel},
        date={1992},
      volume={20},
        ISBN={3-7643-2822-3},
      review={\MR{MR1193913 (94a:14017)}},
}

\bib{Fuj18}{article}{
      author={Fujino, Osamu},
       title={Semipositivity theorems for moduli problems},
        date={2018},
     journal={Ann. Math.},
      volume={187},
      number={187},
       pages={639\ndash 665},
  note={\href{https://doi.org/10.4007/annals.2018.187.3.1}{DOI:10.4007/annals.2018.187.3.1}},
}

\bib{Fuj78}{article}{
      author={Fujita, Takao},
       title={On {K}{\"a}hler fiber spaces over curves},
        date={1978},
     journal={J. Math. Soc. Japan},
      volume={30},
       pages={779\ndash 794},
}

\bib{Ha77}{book}{
      author={Hartshorne, Robin},
       title={Algebraic geometry},
   publisher={Springer-Verlag},
     address={New York},
        date={1977},
        ISBN={0-387-90244-9},
        note={Graduate Texts in Mathematics, No. 52},
      review={\MR{0463157 (57 \#3116)}},
}

\bib{MR2665168}{book}{
      author={Huybrechts, Daniel},
      author={Lehn, Manfred},
       title={The geometry of moduli spaces of sheaves},
     edition={Second},
      series={Cambridge Mathematical Library},
   publisher={Cambridge University Press},
     address={Cambridge},
        date={2010},
        ISBN={978-0-521-13420-0},
         url={http://dx.doi.org/10.1017/CBO9780511711985},
      review={\MR{2665168 (2011e:14017)}},
}

\bib{HX13}{article}{
      author={Hacon, Christopher~D.},
      author={Xu, Chenyang},
       title={Existence of log canonical closures},
        date={2013},
     journal={Invent. Math.},
      volume={192},
      number={1},
       pages={161\ndash 195},
         url={https://doi.org/10.1007/s00222-012-0409-0},
}

\bib{JZ02}{article}{
      author={Jost, JÃŒrgen},
      author={Zuo, Kang},
       title={Hodge bundles over algebraic curves},
        date={2002},
     journal={J. Alg. Geom},
      volume={11},
       pages={535\ndash 546},
}

\bib {Karu00}{article}{
    AUTHOR = {Karu, Kalle},
     TITLE = {Minimal models and boundedness of stable varieties},
   JOURNAL = {J. Algebraic Geom.},
  FJOURNAL = {Journal of Algebraic Geometry},
    VOLUME = {9},
      YEAR = {2000},
    NUMBER = {1},
     PAGES = {93--109},
      ISSN = {1056-3911},
   MRCLASS = {14J10 (14D20 14E30 14J17)},
  MRNUMBER = {1713521 (2001g:14059)},
MRREVIEWER = {Alessio Corti},
}

\bib{Kaw81}{article}{
      author={Kawamata, Y.},
       title={{Characterization of Abelian Varieties}},
        date={1981},
     journal={Compositio Math.},
      volume={43},
       pages={253\ndash 276},
}

\bib{KK08}{article}{
      author={Kebekus, Stefan},
      author={Kov{\'a}cs, S{\'a}ndor~J},
       title={Families of canonically polarized varieties over surfaces},
        date={2008},
        ISSN={0020-9910},
     journal={Invent. Math.},
      volume={172},
      number={3},
       pages={657\ndash 682},
  note={\href{http://dx.doi.org/10.1007/s00222-008-0128-8}{DOI:10.1007/s00222-008-0128-8}.
  Preprint \href{http://arxiv.org/abs/0707.2054}{arXiv:0707.2054}},
      review={\MR{2393082}},
}

\bib{KK10}{article}{
      author={Kebekus, Stefan},
      author={Kov{\'a}cs, S{\'a}ndor~J},
       title={The structure of surfaces and threefolds mapping to the moduli
  stack of canonically polarized varieties},
        date={2010},
        ISSN={0012-7094},
     journal={Duke Math. J.},
      volume={155},
      number={1},
       pages={1\ndash 33},
         url={http://dx.doi.org/10.1215/00127094-2010-049},
      review={\MR{2730371 (2011i:14060)}},
}

\bib{MR4156425}{article}{
   author={Koll\'{a}r, J\'{a}nos},
   author={Kov\'{a}cs, S\'{a}ndor J},
   title={Deformations of log canonical and $F$-pure singularities},
   journal={Algebr. Geom.},
   volume={7},
   date={2020},
   number={6},
   pages={758--780},
   issn={2313-1691},
   review={\MR{4156425}},
   doi={10.14231/ag-2020-027},
 }
 
\bib{KKMS}{book}{
      author={Kempf, George},
      author={Knudsen, Finn},
      author={Mumford, David},
      author={Saint-Donat, Bernard},
       title={Toroidal embeddings i},
      series={Lecture Notes in Mathematics},
   publisher={Springer-Verlag Berlin Heidelberg},
        date={1973},
      volume={399},
         url={https://www.springer.com/gp/book/9783540064329},
}

\bib{KoL10}{article}{
      author={Kov{\'a}cs, S{\'a}ndor~J},
      author={Lieblich, Max},
       title={Boundedness of families of canonically polarized manifolds: A
  higher dimensional analogue of shafarevich's conjecture},
        date={2010},
     journal={Ann. Math.},
      volume={172},
      number={3},
       pages={1719\ndash 1748},
  note={\href{https://annals.math.princeton.edu/2010/172-3/p06}{DOI:10.4007/annals.2010.172.1719}},
}

\bib{KM08a}{article}{
      author={Koziarz, Vincent},
      author={Maubon, Julien},
       title={Representations of complex hyperbolic lattices into rank $2$
  classical {L}ie groups of {H}ermitian type},
        date={2008},
     journal={Geom. Dedicata},
      volume={137},
       pages={85\ndash 111},
}

\bib{KM08b}{article}{
      author={Koziarz, Vincent},
      author={Maubon, Julien},
       title={The {T}oledo invariant on smooth varieties of general type},
        date={2010},
     journal={Crelle J. Reine Angew. Math},
      volume={2010},
      number={649},
       pages={207\ndash 230},
  note={\href{https://doi.org/10.1515/crelle.2010.093}{DOI:10.1515/crelle.2010.093}},
}

\bib{MR2629988}{article}{
      author={Koll{\'a}r, J{\'a}nos},
      author={Kov{\'a}cs, S{\'a}ndor~J},
       title={Log canonical singularities are {D}u {B}ois},
        date={2010},
        ISSN={0894-0347},
     journal={J. Amer. Math. Soc.},
      volume={23},
      number={3},
       pages={791\ndash 813},
         url={http://dx.doi.org/10.1090/S0894-0347-10-00663-6},
  note={\href{http://dx.doi.org/10.1090/S0894-0347-10-00663-6}{DOI:10.1090/S0894-0347-10-00663-6}},
      review={\MR{2629988 (2011m:14061)}},
}

\bib{KM98}{book}{
      author={Koll{\'a}r, J{\'a}nos},
      author={Mori, Shigefumi},
       title={Birational geometry of algebraic varieties},
      series={Cambridge Tracts in Mathematics},
   publisher={Cambridge University Press},
     address={Cambridge},
        date={1998},
      volume={134},
        ISBN={0-521-63277-3},
      review={\MR{2000b:14018}},
}

\bib{KMM87}{incollection}{
      author={Kawamata, Yujiro},
      author={Matsuda, Katsumi},
      author={Matsuki, Kenji},
       title={Introduction to the minimal model problem},
        date={1987},
   booktitle={Algebraic geometry, sendai, 1985},
      series={Adv. Stud. Pure Math.},
      volume={10},
   publisher={North-Holland},
     address={Amsterdam},
       pages={283\ndash 360},
      review={\MR{946243 (89e:14015)}},
}

\bib{Kollar10}{article}{
      author={Koll{\'a}r, J{\'a}nos},
       title={Moduli of varieties of general type},
        date={2010},
        note={Preprint
  \href{https://arxiv.org/abs/1008.0621}{arXiv:1008.0621}},
}

\bib{MR4059993}{article}{
      author={Koll\'{a}r, J\'{a}nos},
       title={Log-plurigenera in stable families},
        date={2018},
        ISSN={2096-6075},
     journal={Peking Math. J.},
      volume={1},
      number={1},
       pages={81\ndash 107},
         url={https://doi.org/10.1007/s42543-018-0002-6},
      review={\MR{4059993}},
}

\bib{ModBook}{book}{
    author = {Koll{\'a}r, J{\'a}nos},
    title = {Families of varieties of general type},
    PUBLISHER = {Cambridge University Press},
    note = {with the collaboration of {K.~Altmann} and {S.~Kov{\'a}cs}},
    note = {to appear},
    year = {2022},
}

\bib{Kol86}{article}{
      author={Koll{\'a}r, J{\'a}nos},
       title={Higher direct images of dualizing sheaves. {I}},
        date={1986},
        ISSN={0003-486X},
     journal={Ann. of Math. (2)},
      volume={123},
      number={1},
       pages={11\ndash 42},
         url={http://dx.doi.org/10.2307/1971351},
      review={\MR{825838 (87c:14038)}},
}

\bib{Kollar90}{article}{
      author={Koll{\'a}r, J{\'a}nos},
       title={Projectivity of complete moduli},
        date={1990},
        ISSN={0022-040X},
     journal={J. Differential Geom.},
      volume={32},
      number={1},
       pages={235\ndash 268},
      review={\MR{1064874 (92e:14008)}},
}
\bib {KSB88}{article}{
  AUTHOR = {Koll{\'a}r, J{\'a}nos},
    AUTHOR = {Shepherd-Barron, N. I.},
     TITLE = {Threefolds and deformations of surface singularities},
   JOURNAL = {Invent. Math.},
  FJOURNAL = {Inventiones Mathematicae},
    VOLUME = {91},
      YEAR = {1988},
    NUMBER = {2},
     PAGES = {299--338},
      ISSN = {0020-9910},
     CODEN = {INVMBH},
   MRCLASS = {14J10 (14D20 14J30 32G10 32G13)},
  MRNUMBER = {922803 (88m:14022)},
MRREVIEWER = {Yujiro Kawamata},
}

\bib{Kovacs00a}{article}{
      author={Kov{\'a}cs, S{\'a}ndor~J},
       title={Algebraic hyperbolicity of f{i}ne moduli spaces},
        date={2000},
        ISSN={1056-3911},
     journal={J. Algebraic Geom.},
      volume={9},
      number={1},
       pages={165\ndash 174},
      review={\MR{1713524 (2000i:14017)}},
}

\bib{Kovacs02}{article}{
      author={Kov{\'a}cs, S{\'a}ndor~J},
       title={Logarithmic vanishing theorems and {A}rakelov-{P}arshin
  boundedness for singular varieties},
        date={2002},
        ISSN={0010-437X},
     journal={Compositio Math.},
      volume={131},
      number={3},
       pages={291\ndash 317},
      review={\MR{2003a:14025}},
}

\bib{Kov13}{book}{
      author={Kov{\'a}cs, S{\'a}ndor~J},
       title={Singularities of stable varieties},
      series={Handbook of moduli. Adv. Lect. Math. (ALM), 25},
   publisher={Int. Press, Somerville, MA},
        date={2013},
      volume={II},
         url={http://dx.doi.org/10.1007/978-94-017-0717-6},
}

\bib{Kovacs96e}{article}{
      author={Kov{\'a}cs, S{\'a}ndor~J},
       title={Smooth families over rational and elliptic curves},
        date={1996},
        ISSN={1056-3911},
     journal={J. Algebraic Geom.},
      volume={5},
      number={2},
       pages={369\ndash 385},
        note={Erratum: {J}.\ {A}lgebraic {G}eom.\ {\bf 6} (1997), no.\ 2, 391},
      review={\MR{1374712 (97c:14035)}},
}

\bib{Kovacs97a}{article}{
      author={Kov{\'a}cs, S{\'a}ndor~J},
       title={On the minimal number of singular f{i}bres in a family of
  surfaces of general type},
        date={1997},
        ISSN={0075-4102},
     journal={J. Reine Angew. Math.},
      volume={487},
       pages={171\ndash 177},
      review={\MR{1454264 (98h:14038)}},
}

\bib{KP17}{article}{
      author={Kov{\'a}cs, S{\'a}ndor~J},
      author={Patakfalvi, {\relax Zs}olt},
     TITLE = {Projectivity of the moduli space of stable log-varieties and
              subadditivity of log-{K}odaira dimension},
   JOURNAL = {J. Amer. Math. Soc.},
  FJOURNAL = {Journal of the American Mathematical Society},
    VOLUME = {30},
      YEAR = {2017},
    NUMBER = {4},
     PAGES = {959--1021},
      ISSN = {0894-0347},
   MRCLASS = {14J10},
  MRNUMBER = {3671934},
       DOI = {10.1090/jams/871},
       URL = {http://dx.doi.org/10.1090/jams/871},
}

\bib{KS13}{article}{
      author={Kov{\'a}cs, S{\'a}ndor~J},
      author={Schwede, Karl},
       title={Inversion of adjunction for rational and {D}u~{B}ois pairs},
        date={2016},
     journal={Algebra Number Theory},
      volume={10},
      number={5},
       pages={969\ndash 1000},
         url={http://dx.doi.org/10.2140/ant.2016.10.969},
}

\bib{KT21}{article}{
      author={Kov{\'a}cs, S{\'a}ndor~J},
      author={Taji, Behrouz},
       title={Hodge sheaves underlying flat projective families},
        date={2021},
        note={Preprint
  \href{https://arxiv.org/abs/2103.03515}{arXiv:2103.03515}},
}

\bib{Laz04-I}{book}{
      author={Lazarsfeld, Robert},
       title={Positivity in algebraic geometry. {I}},
      series={Ergebnisse der Mathematik und ihrer Grenzgebiete. 3. Folge. A
  Series of Modern Surveys in Mathematics [Results in Mathematics and Related
  Areas. 3rd Series. A Series of Modern Surveys in Mathematics]},
   publisher={Springer-Verlag},
     address={Berlin},
        date={2004},
      volume={48},
        ISBN={3-540-22533-1},
        note={Classical setting: line bundles and linear series},
      review={\MR{2095471 (2005k:14001a)}},
}

\bib{Mat72}{article}{
      author={Matsusaka, T.},
       title={Polarized varieties with given {H}illbert polynomial},
        date={1972},
     journal={Amer. J. Math.},
      volume={9},
       pages={1027\ndash 1077},
}

\bib{Parshin68}{article}{
    AUTHOR = {Parshin, A. N.},
     TITLE = {Algebraic curves over function fields. {I}},
   JOURNAL = {Izv. Akad. Nauk SSSR Ser. Mat.},
  FJOURNAL = {Izvestiya Akademii Nauk SSSR. Seriya Matematicheskaya},
    VOLUME = {32},
      YEAR = {1968},
     PAGES = {1191--1219},
      ISSN = {0373-2436},
   MRCLASS = {14.20},
  MRNUMBER = {0257086 (41 \#1740)},
MRREVIEWER = {J.-E. Bertin},
}

\bib{MR2871152}{article}{
      author={Patakfalvi, {\relax Zs}olt},
       title={Viehweg's hyperbolicity conjecture is true over compact bases},
        date={2012},
        ISSN={0001-8708},
     journal={Adv. Math.},
      volume={229},
      number={3},
       pages={1640\ndash 1642},
         url={http://dx.doi.org/10.1016/j.aim.2011.12.013},
      review={\MR{2871152 (2012m:14072)}},
}

\bib{Pet00}{article}{
      author={Peters, Chris A.~M.},
       title={Arakelov-type inequalities for hodge bundles},
        date={2000},
        note={Preprint
  \href{https://arxiv.org/pdf/math/0007102.pdf}{arXiv:math/0007102}},
}

\bib{Shaf63}{incollection}{
    AUTHOR = {Shafarevich, I. R.},
     TITLE = {Algebraic number fields},
 BOOKTITLE = {Proc. Internat. Congr. Mathematicians (Stockholm, 1962)},
     PAGES = {163--176},
 PUBLISHER = {Inst. Mittag-Leffler},
   ADDRESS = {Djursholm},
      YEAR = {1963},
   MRCLASS = {10.65 (12.50)},
  MRNUMBER = {0202709 (34 \#2569)},
      NOTE = {English translation: Amer.\ Math.\ Soc.\ Transl.\ (2) {\bf
      31} (1963), 25--39},
MRREVIEWER = {P. Roquette},
}

\bib{Siu98}{article}{
      author={Siu, Yum-Tong},
       title={Invariance of plurigenera},
        date={1998},
        ISSN={0020-9910},
     journal={Invent. Math.},
      volume={134},
      number={3},
       pages={661\ndash 673},
      review={\MR{1660941 (99i:32035)}},
}

\bib{Taji20}{article}{
      author={Taji, Behrouz},
       title={Birational geometry of smooth families of varieties admitting
  good minimal models},
        date={2020},
        note={Preprint
  \href{https://arxiv.org/abs/2005.01025}{arXiv:2005.01025}},
}

\bib{Taji18}{article}{
      author={Taji, Behrouz},
       title={On the Kodaira dimension of base spaces of families of
  manifolds},
        date={2021},
     journal={Journal Pure Applied Algebra},
      volume={225},
       pages={106729},
  url={https://www.sciencedirect.com/science/article/pii/S0022404921000694},
        note={\href{https://arxiv.org/abs/1809.05616}{arXiv:1809.05616}},
}

\bib{Viehweg83}{incollection}{
      author={Viehweg, Eckart},
       title={Weak positivity and the additivity of the {K}odaira dimension for
  certain fibre spaces},
        date={1983},
   booktitle={Algebraic varieties and analytic varieties (tokyo, 1981)},
      series={Adv. Stud. Pure Math.},
      volume={1},
   publisher={North-Holland},
     address={Amsterdam},
       pages={329\ndash 353},
      review={\MR{715656 (85b:14041)}},
}

\bib{Viehweg95}{book}{
      author={Viehweg, E.},
       title={Quasi-projective moduli for polarized manifolds},
      series={Ergebnisse der Mathematik und ihrer Grenzgebiete (3)},
   publisher={Springer-Verlag},
     address={Berlin},
        date={1995},
      volume={30},
        ISBN={3-540-59255-5},
      review={\MR{1368632 (97j:14001)}},
}
\bib{Viehweg01}{incollection}{
    AUTHOR = {Viehweg, Eckart},
     TITLE = {Positivity of direct image sheaves and applications to
              families of higher dimensional manifolds},
 BOOKTITLE = {School on Vanishing Theorems and Effective Results in
              Algebraic Geometry (Trieste, 2000)},
    SERIES = {ICTP Lect. Notes},
    VOLUME = {6},
     PAGES = {249--284},
 PUBLISHER = {Abdus Salam Int. Cent. Theoret. Phys., Trieste},
      YEAR = {2001},
   MRCLASS = {14H10 (14J10)},
  MRNUMBER = {1919460 (2003f:14024)},
MRREVIEWER = {Arvid Siqveland},
}

\bib{Vie08}{article}{
      author={Viehweg, Eckart},
       title={Arakelov inequalities},
        date={2008},
     journal={Surveys Diff. Geom.},
      volume={13},
       pages={245\ndash 276},
        note={preprint
  \href{https://arxiv.org/abs/0812.3350}{arXiv:0812.3350}.},
}

\bib{Viehweg10}{article}{
      author={Viehweg, Eckart},
       title={Compactifications of smooth families and of moduli spaces of
  polarized manifolds},
        date={2010},
     journal={Ann. of Math.},
      number={172},
       pages={809\ndash 910},
        note={\href{http://arxiv.org/abs/math/0605093}{arXiv:math/0605093}},
}

\bib{VZ02}{incollection}{
      author={Viehweg, Eckart},
      author={Zuo, K.},
       title={Base spaces of non-isotrivial families of smooth minimal models},
        date={2002},
   booktitle={Complex geometry (G{\"o}ttingen, 2000)},
   publisher={Springer},
     address={Berlin},
       pages={279\ndash 328},
      review={\MR{1922109 (2003h:14019)}},
}

\bib{Vie-Zuo03a}{article}{
      author={Viehweg, Eckart},
      author={Zuo, Kang},
       title={On the {B}rody hyperbolicity of moduli spaces for canonically
  polarized manifolds},
        date={2003},
        ISSN={0012-7094},
     journal={Duke Math. J.},
      volume={118},
      number={1},
       pages={103\ndash 150},
      review={\MR{1978884 (2004h:14042)}},
}

\bib{VZ07}{article}{
      author={Viehweg, Eckart},
      author={Zuo, Kang},
       title={Arakelov inequalities and the uniformization of certain rigid
  {S}himura varieties},
        date={2007},
        ISSN={1056-3911},
     journal={J. Diff. Geom.},
      volume={77},
       pages={291\ndash 352},
}

\bib{Zuc82}{article}{
      author={Zucker, Steven},
       title={Remarks on a theorem of {F}ujita},
        date={1982},
     journal={J. Math. Soc. Japan},
      volume={34},
      number={1},
       pages={47\ndash 54},
}

\end{biblist}
\end{bibdiv}

\end{document}

%%% Local Variables:
%%% mode: latex
%%% TeX-master: t
%%% End: